\documentclass[11pt,reqno]{amsart}
\usepackage[utf8]{inputenc}
\usepackage[english]{babel}
\usepackage{xcolor}
\usepackage{tikz}
\usepackage{amsmath,amssymb,amsthm,mathtools}
\usepackage[a4paper,margin=2.6cm]{geometry}
\usepackage[colorlinks=true,linkcolor=blue!50!black,citecolor=blue!50!black,urlcolor=blue!50!black]{hyperref}

\numberwithin{equation}{section}
\newtheorem{thm}{Theorem}[section]
\newtheorem{lem}[thm]{Lemma}

\newtheorem{cor}[thm]{Corollary}
\theoremstyle{definition}
\newtheorem{defn}[thm]{Definition}
\newtheorem{ex}[thm]{Example}
\newtheorem{prob}{Problem}
\theoremstyle{remark}
\newtheorem{obs}[thm]{Remark}

\newcommand{\C}{\mathbb{C}}
\newcommand{\R}{\mathbb{R}}
\newcommand{\D}{\mathbb{D}}
\newcommand{\B}{\mathbb{B}^n}
\newcommand{\Pn}{\mathbb{D}^n}
\newcommand{\Fa}{\mathcal{F}_\alpha}
\newcommand{\Aut}{\operatorname{Aut}}
\newcommand{\ord}{\operatorname{ord}}
\newcommand{\tr}{\operatorname{tr}}
\newcommand{\Hess}{\operatorname{Hess}}
\newcommand{\Id}{I}
\newcommand{\RE}{\operatorname{Re}}
\newcommand{\diam}{\operatorname{diam}}
\newcommand{\pd}[2]{\frac{\partial #1}{\partial #2}}
\newcommand{\pdd}[3]{\frac{\partial^2 #1}{\partial #2\,\partial #3}}
\newcommand{\bil}[2]{\langle #1,#2\rangle}

\makeatletter
\def\tocsection#1#2#3{\indentlabel{\@ifnotempty{#2}{\makebox[2.1em][l]{\ignorespaces#1 #2.}}}#3}
\makeatother

\begin{document}

\title[The Schwarzian derivative in several variables]{The Schwarzian derivative\\ in several complex variables\\ univalence and geometry}
\author{Rodrigo Hernández}
\makeatletter
\g@addto@macro\@setauthors{%
  {\centering\footnotesize
   Facultad de Ingeniería y Ciencias, Universidad Adolfo Ibáñez\par
   Av. Padre Hurtado 750, Viña del Mar, Chile\par
   \texttt{rodrigo.hernandez@uai.cl}\par
   ORCID \href{https://orcid.org/0000-0002-2787-4598}{0000-0002-2787-4598}\par}%
  \vskip 6pt}
\makeatother

\begin{abstract}
In one complex variable the Schwarzian derivative characterizes the Möbius transformations, arises from a second-order linear equation, and controls univalence through the theorems of Nehari and of Ahlfors--Weill. This chapter presents what is known about these three properties for locally biholomorphic mappings in $\mathbb{C}^n$. Starting from Oda's definition we construct a bilinear operator whose norm, taken with respect to the Bergman metric, is invariant under the automorphisms of the ball, and we show that this operator measures the extrinsic curvature of the preimages of hyperplanes. We study the linearly invariant family defined by a bound on this norm, we prove that a sufficiently small bound implies univalence, and we describe the extension of Ahlfors--Weill type, which takes values in the space of hyperplanes. The polydisk and convex mappings are also treated, and open problems and exercises are included.
\end{abstract}

\subjclass[2020]{Primary 32H02, 32A17; Secondary 30C45, 30C55, 32W50}
\keywords{Schwarzian derivative, several complex variables, univalence criteria, Bergman metric, extrinsic curvature, linearly invariant families, Ahlfors--Weill extension}

\maketitle
\tableofcontents

\section{The Schwarzian derivative in one variable}\label{sec:modelo}

The Schwarzian derivative of an analytic and locally univalent function $f$ is
\begin{equation}\label{eq:schwarzclasica}
Sf=\Bigl(\frac{f''}{f'}\Bigr)'-\frac12\Bigl(\frac{f''}{f'}\Bigr)^2 .
\end{equation}
The expression appears already in the nineteenth century, in the work of Kummer and of Schwarz on the hypergeometric equation, and it has occupied a central place in geometric function theory ever since. Its usefulness rests on three properties, which are worth recalling in some detail because this whole chapter consists in asking what happens to them when the disk is replaced by a domain in $\C^n$.

The first is invariance. We have $Sf\equiv0$ if and only if $f$ is a Möbius transformation, and the chain rule
\[
S(f\circ g)=\bigl((Sf)\circ g\bigr)(g')^2+Sg
\]
shows that $S(T\circ f)=Sf$ for every Möbius transformation $T$. The Schwarzian therefore measures how far $f$ is from the Möbius transformations. If $f$ is defined in the unit disk $\D$, the quantity
\begin{equation}\label{eq:normaclasica}
\|Sf\|=\sup_{z\in\D}\,(1-|z|^2)^2\,|Sf(z)|
\end{equation}
is unchanged when $f$ is composed with a disk automorphism, because $(1-|z|^2)|\sigma'(z)|=1-|\sigma(z)|^2$ for every automorphism $\sigma$.

The invariance is verified by a calculation, but it is natural to ask how one arrives at the expression \eqref{eq:schwarzclasica}. The argument, which is essentially that of Schwarz, is an elimination of parameters. If $g=T\circ f$ with $T(w)=(aw+b)/(cw+d)$, then $(cf+d)\,g=af+b$, and three successive differentiations give
\[
c\,(fg)^{(k)}+d\,g^{(k)}-a\,f^{(k)}=0,\qquad k=1,2,3,
\]
a homogeneous linear system with the nontrivial solution $(c,d,-a)$. Its determinant vanishes, and expanding it and dividing by $2f'^2g'^2$ we obtain precisely $Sg=Sf$. Three differentiations are needed because there are three parameters to eliminate.

It is worth pausing at this point, because the order of the operator is no accident. The Möbius transformations $T(z)=(az+b)/(cz+d)$ form a group of complex dimension three, and each one is determined by the three numbers $T(z_0)$, $T'(z_0)$, $T''(z_0)$. On the other hand, the solutions of a differential equation of order $k$, solved for the highest derivative, form a family with $k$ parameters, the initial values $f(z_0),\dots,f^{(k-1)}(z_0)$. A differential operator that vanishes exactly on the Möbius transformations must therefore be of order three. One of order two vanishes on a family with only two parameters; this is what happens with the pre-Schwarzian $f''/f'$, whose kernel consists of the affine mappings. In other words, given a locally univalent $f$ and a point $z_0$, there is a unique Möbius transformation $M$ that agrees with $f$ to second order at $z_0$, and the Schwarzian measures the discrepancy at the third,
\begin{equation}\label{eq:osculante}
f(z)-M(z)=\tfrac16\,f'(z_0)\,Sf(z_0)\,(z-z_0)^3+O\bigl((z-z_0)^4\bigr).
\end{equation}
Three degrees of freedom, an operator of order three. This count, trivial in one variable, no longer balances in $\C^n$, and that is the first difficulty we shall encounter.

The second property ties the Schwarzian to a linear equation. If $Sf=2p$, the functions
\[
u_0=(f')^{-1/2},\qquad u_1=f\,(f')^{-1/2}
\]
are linearly independent solutions of
\begin{equation}\label{eq:lineal1}
u''+p\,u=0,
\end{equation}
and $f=u_1/u_0$. Conversely, the quotient of two independent solutions of \eqref{eq:lineal1} has Schwarzian $2p$. The consequence that interests us is the following.

\begin{lem}\label{lem:ceros1}
Let $f$ be analytic and locally univalent in a simply connected domain $\Omega\subset\C$, with $Sf=2p$. Then $f$ is univalent in $\Omega$ if and only if no nontrivial solution of \eqref{eq:lineal1} vanishes at two distinct points of $\Omega$.
\end{lem}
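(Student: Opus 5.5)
The plan is to reduce everything to the representation $f=u_1/u_0$ recalled just before the statement, and to identify the general solution of \eqref{eq:lineal1} with a linear combination of $u_0,u_1$. Since $\Omega$ is simply connected and $f'\neq0$, a branch of $(f')^{-1/2}$ exists, so $u_0$ and $u_1$ are well defined, holomorphic, and solve \eqref{eq:lineal1}. Their Wronskian is $u_0u_1'-u_0'u_1=u_0^2f'=1$, so they are independent. The solution space of a second-order linear equation with holomorphic coefficient on a simply connected domain is two-dimensional, so every solution has the form $u=a u_1+b u_0=(af+b)(f')^{-1/2}$ with constants $a,b$. The solution is nontrivial exactly when $(a,b)\neq(0,0)$.

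Since $(f')^{-1/2}$ never vanishes, the zeros of $u$ are exactly the zeros of $af+b$. If $a=0$, then $b\neq0$ and $u$ has no zeros at all. If $a\neq0$, then $u(z)=0$ if and only if $f(z)=-b/a$. Hence a nontrivial solution vanishing at two distinct points $z_1\neq z_2$ exists if and only if there is a value $w$ with $f(z_1)=f(z_2)=w$.

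For the direction ``$f$ not univalent implies such a solution exists'', take $z_1\neq z_2$ with $f(z_1)=f(z_2)=w$ and set $u=(f-w)(f')^{-1/2}$. For the converse, take any nontrivial solution, write it as above, observe that $a\neq0$ is forced, and read off $f(z_1)=f(z_2)=-b/a$.

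A small point to state explicitly is that $f$ is assumed analytic, so $f$ is finite-valued and the value $w$ is a genuine complex number; the argument uses only $u_0$ and $u_1$ and no poles arise. The only substantive step is the claim that the solution space is exactly the span of $u_0$ and $u_1$. This follows from existence and uniqueness for linear holomorphic ODEs on simply connected domains, with solutions continued along paths. The verification that $u_0$ and $u_1$ solve \eqref{eq:lineal1} when $Sf=2p$ is a routine computation, already asserted in the text. I expect no real obstacle beyond stating these facts carefully.
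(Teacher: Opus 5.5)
Your proposal is correct and is essentially the paper's argument. Both write every solution as $(c_0+c_1f)(f')^{-1/2}$, use that $(f')^{-1/2}$ never vanishes, and observe that a nontrivial solution vanishes at two distinct points exactly when $f$ takes the same value at both. Your additional checks (the Wronskian $u_0u_1'-u_0'u_1=1$, and the two-dimensionality of the solution space) only make explicit what the paper leaves implicit.
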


\begin{proof}
Every solution has the form $u=u_0(c_0+c_1f)$, and $u_0$ does not vanish. The solutions that vanish at $a$ are the multiples of $u_0\,(f-f(a))$, and one of them vanishes also at $b$ exactly when $f(b)=f(a)$.
\end{proof}

The third property is the one that gives geometric content to the previous two. The size of $\|Sf\|$ controls univalence. The fundamental result is due to Nehari \cite{Nehari}.

\begin{thm}[Nehari]\label{thm:nehari}
If $f$ is analytic and locally univalent in $\D$ and $\|Sf\|\le2$, then $f$ is univalent in $\D$.
\end{thm}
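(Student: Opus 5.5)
By Lemma~\ref{lem:ceros1}, it suffices to prove the following: if $Sf=2p$ with $(1-|z|^2)^2|p(z)|\le 1$ in $\D$, then no nontrivial solution of $u''+pu=0$ vanishes at two distinct points of $\D$. Suppose, for contradiction, that $u$ is a nontrivial solution with $u(z_1)=u(z_2)=0$ and $z_1\neq z_2$. Since $\|Sf\|$ is invariant under disk automorphisms, we may precompose $f$ with an automorphism so that $z_1=-r$ and $z_2=r$ for some $0<r<1$. After this normalization the solution (now for the transformed $p$, which satisfies the same bound) vanishes at $\pm r$. Note that $u(x)\not\equiv 0$ on the segment, because $u$ is analytic and nontrivial.

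The key step is a Sturm-type comparison along the segment $[-r,r]$. Set $v(x)=u(x)$ for real $x\in[-r,r]$. Multiply $u''=-pu$ by $\bar u$ and integrate over $[-r,r]$. Integrating by parts, and using that the boundary terms vanish, gives
\[
\int_{-r}^{r}|u'(x)|^2\,dx=\int_{-r}^{r}p(x)|u(x)|^2\,dx\le\int_{-r}^{r}\frac{|u(x)|^2}{(1-x^2)^2}\,dx .
\]
It therefore remains to prove the sharp weighted inequality
\[
\int_{-r}^{r}|w'|^2\,dx>\int_{-r}^{r}\frac{|w|^2}{(1-x^2)^2}\,dx
\]
for every nonzero smooth $w$ on $[-r,r]$ with $w(\pm r)=0$. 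This contradicts the previous display and finishes the proof.

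To prove the weighted inequality, I would use the explicit positive solution $\phi(x)=(1-x^2)^{1/2}$ of the extremal equation $\phi''+(1-x^2)^{-2}\phi=0$ on $(-1,1)$. This comes from $f(z)=\tfrac12\log\frac{1+z}{1-z}$, whose Schwarzian is exactly $2/(1-z^2)^2$. Because $\phi>0$ on $[-r,r]$, we can write $w=\phi\,\eta$ with $\eta(\pm r)=0$. The standard Picone identity then gives
\[
\int_{-r}^{r}\Bigl(|w'|^2-\frac{|w|^2}{(1-x^2)^2}\Bigr)dx=\int_{-r}^{r}\phi^2|\eta'|^2\,dx,
\]
using $\phi''=-(1-x^2)^{-2}\phi$ and an integration by parts whose boundary terms vanish. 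The right-hand side is strictly positive unless $\eta$ is constant, and $\eta(\pm r)=0$ forces that constant to be zero, i.e.\ $w\equiv 0$. This gives the strict inequality and hence the contradiction.

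The main obstacle is getting the sharp constant, which hinges on choosing the comparison function correctly. One must verify that $\phi=(1-x^2)^{1/2}$ solves the extremal equation: $\phi''=-(1-x^2)^{-3/2}$, so indeed $\phi''=-\phi/(1-x^2)^2$. One must also be careful with complex-valued $u$: the pairing above, taking real parts where needed, is what makes the estimate $\RE\int p|u|^2\le\int|p||u|^2$ legitimate. Finally, the reduction to a symmetric real segment relies on the automorphism invariance of $\|Sf\|$ noted after \eqref{eq:normaclasica}, together with the chain rule for $S$.
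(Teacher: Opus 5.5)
Your argument is correct, and it differs from the paper's in the comparison step.

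The paper argues pointwise. It takes two consecutive real zeros $x_1<x_2$ of $u$ and passes to $v=|u|$, which is smooth and positive on $(x_1,x_2)$ and satisfies $v''\ge\RE(\overline u\,u'')/|u|\ge-v/(1-x^2)^2$. It then shows that the Wronskian $W=v'y-vy'$, with $y=\sqrt{1-x^2}$, is nondecreasing. Yet $W$ passes from a positive value at $x_1^+$ to a negative one at $x_2^-$, and this last step uses that the zeros are simple.

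You argue with the quadratic form instead. The energy identity $\int_{-r}^r|u'|^2=\int_{-r}^r p|u|^2$ has a real left side, so only $|p|$ matters. You combine it with the Picone identity $\int_{-r}^r\bigl(|w'|^2-|w|^2(1-x^2)^{-2}\bigr)=\int_{-r}^r\phi^2|\eta'|^2$ for $w=\phi\eta$. This checks out, including the vanishing of the boundary term $\phi\phi'|\eta|^2$ at $\pm r$, since $\phi(\pm r)>0$.

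Both proofs rest on the same extremal function $\sqrt{1-x^2}$. Yours has several advantages:
\begin{itemize}
\item it never takes $|u|$;
\item it needs neither consecutive nor simple zeros, since zeros of $u$ inside $(-r,r)$ do no harm;
\item it makes the sharpness transparent, because $\sqrt{1-x^2}$ appears as the positive ground state of the extremal Dirichlet problem.
\end{itemize}
The one detail you leave implicit is how the solution transfers under the automorphism, namely $\tilde u=(u\circ\sigma)(\sigma')^{-1/2}$. You can bypass it by applying Lemma~\ref{lem:ceros1} directly to $f\circ\sigma$.

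The paper's pointwise version buys something else. It is the form the chapter later generalizes: the inequality for $\varphi=|u|$ along a diameter in Lemma~\ref{lem:radial}, and the convexity of the density $\rho_u$. Run with initial data $u(0)=1$, $u'(0)=0$ instead of two zeros, it also gives pointwise lower bounds such as $|u(x)|\ge\sqrt{1-x^2}$. An energy inequality with Dirichlet conditions does not give these directly.
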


\begin{proof}
Suppose that $f(a)=f(b)$ with $a\ne b$. Let $\sigma$ be an automorphism of the disk that carries the real diameter onto the hyperbolic geodesic through $a$ and $b$. By the invariance of the norm, $g=f\circ\sigma$ satisfies $\|Sg\|\le2$ and takes the same value at two points of the interval $(-1,1)$. By Lemma~\ref{lem:ceros1} there is a nontrivial solution $u$ of $u''+pu=0$, with $2p=Sg$, that vanishes at those two points; since its zeros are isolated, we may take two consecutive zeros $x_1<x_2$ on the real axis.

On $(x_1,x_2)$ the function $v=|u|$ is positive and smooth, and a calculation with the Cauchy--Schwarz inequality gives $v''\ge\RE(\overline u\,u'')/|u|$. Therefore
\[
v''\ \ge\ -|p|\,v\ \ge\ -\frac{v}{(1-x^2)^2}.
\]
We compare with $y(x)=\sqrt{1-x^2}$, which is positive on $(-1,1)$ and satisfies $y''=-y/(1-x^2)^2$. The function $W=v'y-vy'$ has derivative $W'=v''y-vy''\ge0$. But the zeros of $u$ are simple, so that $W(x_1^+)=|u'(x_1)|\,y(x_1)>0$ and $W(x_2^-)=-|u'(x_2)|\,y(x_2)<0$. A nondecreasing function cannot pass from a positive value to a negative one, and this contradiction proves the theorem.
\end{proof}

The constant $2$ is sharp, and it is remarkable that the class so defined is not small, since it contains, for example, all convex functions. In the opposite direction, Kraus and Nehari proved that every univalent function in $\D$ satisfies $\|Sf\|\le6$. There is also a criterion in terms of the pre-Schwarzian $Pf=f''/f'$, due to Becker \cite{Becker}. If $(1-|z|^2)|zPf(z)|\le1$ in $\D$, then $f$ is univalent. For all of this we refer to \cite{Dur83}.

When Nehari's bound holds with room to spare, much more can be said. Ahlfors and Weill \cite{AW} showed that if $\|Sf\|\le2t$ with $t<1$, then $f$ admits a quasiconformal extension to the Riemann sphere, and they gave the formula
\begin{equation}\label{eq:AW1}
E_f(\zeta)=f(\zeta)+\frac{(1-|\zeta|^2)f'(\zeta)}{\overline{\zeta}-\tfrac12(1-|\zeta|^2)Pf(\zeta)},\qquad \zeta\in\D,
\end{equation}
for the value of the extension at the reflected point $1/\overline\zeta$. Geometrically, $E_f(\zeta)$ is the value at $1/\overline\zeta$ of the Möbius transformation that best approximates $f$ at $\zeta$. At $\zeta=0$, and for $f(z)=z+a_2z^2+\cdots$, we obtain $E_f(0)=-1/a_2$. This point has an interpretation in terms of the equation \eqref{eq:lineal1}. The solution normalized by $u(0)=1$, $u'(0)=0$ is $u=(f')^{-1/2}(1+a_2f)$, and it vanishes precisely where $f=-1/a_2$. To say that the Ahlfors--Weill extension sends the exterior of the disk to the complement of $f(\D)$ is to say, at the origin, that \emph{the normalized solution does not vanish}. We shall see that this formulation, rather than the formula \eqref{eq:AW1}, is the one that survives in higher dimension.

Finally, the bounds on $\|Sf\|$ are well organized in the language of the linearly invariant families of Pommerenke \cite{Pom}. A family $\mathcal F$ of functions $f(z)=z+a_2z^2+\cdots$ locally univalent in $\D$ is linearly invariant if with each $f$ it contains its Koebe transforms
\[
\frac{f\bigl(\frac{z+z_0}{1+\overline{z_0}z}\bigr)-f(z_0)}{(1-|z_0|^2)f'(z_0)},\qquad z_0\in\D .
\]
The order $\sup_{\mathcal F}|a_2|$ of the family governs the growth, the distortion and the covering properties of its members, and Pommerenke calculated, by a variational argument, that the order of the family $\{\|Sf\|\le\alpha\}$ is exactly $\sqrt{1+\alpha/2}$.

\medskip

\noindent\textbf{The problem.} In $\C^n$ the conformal mappings of the plane are replaced by biholomorphic mappings, and we ask how much of the above remains true. The precise questions are the following.
\begin{enumerate}
\item[(A)] Is there a differential operator that vanishes exactly on the Möbius transformations of $\C^n$ and that arises from a linear equation whose solutions recover the mapping as a quotient?
\item[(B)] Does this operator have an invariant norm, and does a bound on that norm imply univalence?
\item[(C)] What remains of the Ahlfors--Weill extension and of the geometry of the image?
\end{enumerate}
The answer to (A) is affirmative and is due to Oda \cite{Oda}. The equation \eqref{eq:lineal1} becomes an overdetermined system of partial differential equations, and the Schwarzian becomes a family of coefficients $S^k_{ij}F$. Section~\ref{sec:schwarziana} develops this construction with complete proofs, gathers the coefficients into a bilinear operator $S_F$, and gives it a geometric meaning, since $S_F$ is the curvature of the preimages of hyperplanes. The answer to (B) depends on the domain. In the ball $\B$ the Bergman metric gives a norm $\|S_F\|$ invariant under automorphisms, the mappings with $\|S_F\|\le\alpha$ form a compact linearly invariant family (Section~\ref{sec:familia}), and a sufficiently small bound implies univalence (Sections~\ref{sec:univalencia} and~\ref{sec:AW}). The three steps of Nehari's proof reappear, but each one costs more. The lemma on zeros now speaks of common zeros of $n$ solutions, the reduction to a diameter leaves a first-order term that does not exist in one variable, and the Sturm comparison must be replaced, in order to reach the definitive result, by the convexity of a density with respect to the Bergman metric. The answer to (C) is that the extension exists, but it takes values in the space of hyperplanes. What the image omits is no longer points but complex hyperplanes, and the geometric conclusion is the weak linear convexity of the image (Section~\ref{sec:AW}).

The rest of the chapter explores the limits of this picture. In the polydisk (Section~\ref{sec:polidisco}) the automorphisms are not Möbius, the invariance is lost and one must argue differently; in exchange, the hypothesis turns out to be surprisingly rigid. Section~\ref{sec:convexas} computes the norm of the Schwarzian of convex mappings. The chapter ends with historical notes, a list of open problems and some exercises.

The Schwarzian also extends in another direction, which we shall not pursue here. In the plane, Hernández and Martín \cite{HM15} defined the pre-Schwarzian and the Schwarzian of a harmonic mapping $f=h+\overline g$, in which the dilatation $\omega=g'/h'$ intervenes, and proved that a bound on the norm of $S_f$ implies univalence and quasiconformal extension \cite{HM15b}. The natural extension of $S_f$ to several variables, in the context of complex harmonic mappings, is the Schwarzian of the pluriharmonic mappings $f=h+\overline g$ of $\C^n$, defined in \cite{EFHV}; when $g=0$ it reduces to the operator $S_F$ of this chapter. Here we restrict ourselves to the holomorphic case.

A word of warning before we begin. Almost none of the constants that will appear is sharp, and in several variables the extremal mappings are rarely known. We have preferred to give explicit constants, even if they can be improved, and to say clearly which are sharp and which are not.

\section{Preliminaries in $\C^n$}\label{sec:prelim}

We collect here the notation and the general facts that will be used. The reader familiar with the book of Graham and Kohr \cite{GK} or with that of Rudin \cite{Rudin} may pass directly to the next section.

\medskip

\noindent\textbf{Notation.} We write $z=(z_1,\dots,z_n)\in\C^n$, with Hermitian product $z\cdot w=\sum_iz_i\overline{w_i}$ and Euclidean norm $|z|=(z\cdot z)^{1/2}$. We shall also use the \emph{bilinear} product
\[
\bil{a}{v}=\sum_{i=1}^na_iv_i,
\]
without conjugation, which is the one that appears when a holomorphic gradient acts on a vector, and the norm $|z|_\infty=\max_i|z_i|$. The unit ball and the polydisk are
\[
\B=\{z\in\C^n:\ |z|<1\},\qquad\Pn=\{z\in\C^n:\ |z|_\infty<1\}.
\]
We denote by $L^k(\C^n)$ the space of $k$-linear mappings from $(\C^n)^k$ into $\C^n$, with the norm $\|T\|=\sup\{|T(u_1,\dots,u_k)|:\ |u_j|=1\}$. For a matrix $A$, $\overline A$ is the matrix of conjugate entries, $A^t$ the transpose and $A^*=\overline A^{\,t}$ the adjoint.

Let $\Omega\subset\C^n$ be a domain and $F=(f_1,\dots,f_n):\Omega\to\C^n$ holomorphic. Its differential $DF(z)$ is the matrix $(\partial f_i/\partial z_j)$, its second derivative $D^2F(z)\in L^2(\C^n)$ is the symmetric bilinear form with components $\sum_{i,j}\partial^2f_l/\partial z_i\partial z_j\,u_iv_j$, and $J_F=\det DF$ is the complex Jacobian. The mapping is \emph{locally biholomorphic} if $J_F$ does not vanish, and \emph{biholomorphic} or \emph{univalent} if in addition it is injective. For holomorphic matrix-valued functions we have the rules
\begin{equation}\label{eq:producto}
D(\Phi\Psi)(v)=D\Phi(v)\,\Psi+\Phi\,D\Psi(v),\qquad D(\Phi^{-1})(v)=-\Phi^{-1}\,D\Phi(v)\,\Phi^{-1},
\end{equation}
and Jacobi's formula $\partial_i\log\det\Phi=\tr(\Phi^{-1}\partial_i\Phi)$, which applied to $\Phi=DF$ gives
\begin{equation}\label{eq:jacobi}
\pd{}{z_i}\log J_F=\tr\bigl(DF^{-1}D^2F(e_i,\cdot)\bigr).
\end{equation}

There is one difference from the plane that conditions everything that follows. For $n\ge2$ there is no Riemann mapping theorem, the ball and the polydisk are not biholomorphic to each other, and the class of normalized biholomorphic mappings of $\B$ is not normal. It suffices to look at $F(z_1,z_2)=(z_1+\varphi(z_2),z_2)$ with $\varphi$ an arbitrary holomorphic function, $\varphi(0)=\varphi'(0)=0$. For this reason geometric function theory in several variables works with subclasses, and results such as the bound $\|Sf\|\le6$ of Kraus and Nehari can have no analogue.

\medskip

\noindent\textbf{The ball, the polydisk and the Bergman metric.} The automorphisms of $\B$ act transitively and are all linear fractional transformations
\begin{equation}\label{eq:autbola}
\sigma(z)=\frac{Az+B}{Cz+D},
\end{equation}
with $A$ of size $n\times n$, $B$ of size $n\times1$, $C$ of size $1\times n$ and $D$ a scalar; the isotropy group of the origin is the unitary group \cite{Rudin}. In the polydisk, up to permutations and rotations of the coordinates, the automorphisms are
\begin{equation}\label{eq:autpoli}
\psi_a(z)=\bigl(\psi_{a_1}(z_1),\dots,\psi_{a_n}(z_n)\bigr),\qquad\psi_{a_j}(\zeta)=\frac{\zeta-a_j}{1-\overline{a_j}\zeta},\quad a\in\Pn .
\end{equation}
The group is transitive on $\Pn$ but not on the directions at a point, and for $n\ge2$ the mapping $\psi_a$ does not have the form \eqref{eq:autbola}, with a common denominator, unless $a=0$.

The Bergman metric of $\B$ is
\begin{equation}\label{eq:bergmanbola}
g_{ij}(z)=\frac{n+1}{(1-|z|^2)^2}\Bigl[(1-|z|^2)\delta_{ij}+\overline{z_i}z_j\Bigr],\qquad\|v\|_z^2=\sum_{i,j}g_{ij}(z)v_i\overline{v_j}.
\end{equation}
At the origin it is $(n+1)$ times the Euclidean metric. At the point $(z_1,0,\dots,0)$,
\[
\|v\|_z^2=(n+1)\Bigl[\frac{|v_1|^2}{(1-|z_1|^2)^2}+\frac{|v_2|^2+\cdots+|v_n|^2}{1-|z_1|^2}\Bigr],
\]
so that the radial direction and the tangential ones are measured with different weights. This anisotropy has no counterpart in the disk and will reappear several times. The geodesics through the origin are the diameters, and the automorphisms are isometries. In $\Pn$ the Bergman metric is diagonal,
\begin{equation}\label{eq:bergmanpoli}
\|v\|_z^2=2\sum_{i=1}^n\frac{|v_i|^2}{(1-|z_i|^2)^2},
\end{equation}
and the automorphisms \eqref{eq:autpoli} are also isometries. For $n=1$ both formulas reduce to $\|v\|_z^2=2|v|^2/(1-|z|^2)^2$, twice the square of the hyperbolic norm.

\medskip

\noindent\textbf{Möbius transformations.} We shall give this name to the projective transformations
\begin{equation}\label{eq:mobius}
M(z)=\Bigl(\frac{l_1(z)}{l_0(z)},\dots,\frac{l_n(z)}{l_0(z)}\Bigr),\qquad l_i(z)=a_{i0}+a_{i1}z_1+\cdots+a_{in}z_n,
\end{equation}
with $\det(a_{ij})_{i,j=0}^n\ne0$. They carry complex hyperplanes to complex hyperplanes. The automorphisms of the ball are of this type; those of the polydisk, in general, are not. The Jacobian is
\begin{equation}\label{eq:jacmobius}
J_M(z)=\frac{\det(a_{ij})}{l_0(z)^{n+1}},
\end{equation}
an identity from which comes the exponent $1/(n+1)$ that will appear everywhere. We shall frequently use the family
\begin{equation}\label{eq:Ta}
T_a(w)=\frac{w}{1+\bil{a}{w}},\qquad a\in\C^n,
\end{equation}
which fixes the origin, has identity differential there, and satisfies $J_{T_a}(w)=(1+\bil{a}{w})^{-(n+1)}$.

\medskip

\noindent\textbf{Linearly invariant families.} Let $\mathcal{LS}(\B)$ be the class of locally biholomorphic mappings $F:\B\to\C^n$ with $F(0)=0$ and $DF(0)=\Id$. For $\sigma\in\Aut(\B)$, the \emph{Koebe transform}
\begin{equation}\label{eq:koebe}
\Lambda_\sigma(F)(z)=\bigl[DF(\sigma(0))\,D\sigma(0)\bigr]^{-1}\bigl[F(\sigma(z))-F(\sigma(0))\bigr]
\end{equation}
belongs again to $\mathcal{LS}(\B)$, and a family $\mathcal F\subset\mathcal{LS}(\B)$ is \emph{linearly invariant} if it is closed under all of them. In several variables the coefficient $a_2$ is a bilinear form, $A_2=\tfrac12D^2F(0)$, and there are two natural ways of measuring it. The \emph{order} of Pfaltzgraff \cite{Pf97} uses the trace,
\begin{equation}\label{eq:orden}
\ord\mathcal F=\sup_{F\in\mathcal F}\ \sup_{|v|=1}\Bigl|\tr\Bigl(\tfrac12D^2F(0)(v,\cdot)\Bigr)\Bigr|=\tfrac12\sup_{F\in\mathcal F}|\nabla J_F(0)|,
\end{equation}
where the second equality is \eqref{eq:jacobi}. The \emph{norm order} of Pfaltzgraff and Suffridge \cite{PS} uses the norm,
\begin{equation}\label{eq:ordennorma}
\|\ord\|\,\mathcal F=\sup_{F\in\mathcal F}\ \sup_{|v|\le1}|A_2(F)(v,v)|.
\end{equation}
We have $\ord\mathcal F\le n\,\|\ord\|\,\mathcal F$ \cite[p.~433]{GK}. The order controls the Jacobian. If $\ord\mathcal F=b<\infty$, then for every $F\in\mathcal F$
\[
\frac{(1-|z|)^{b-\frac{n+1}{2}}}{(1+|z|)^{b+\frac{n+1}{2}}}\le|J_F(z)|\le\frac{(1+|z|)^{b-\frac{n+1}{2}}}{(1-|z|)^{b+\frac{n+1}{2}}}
\]
\cite[Theorem~10.2.3]{GK}, a result due to Liu and to Pfaltzgraff \cite{Pf97}. The norm order is stronger. If it is finite, the family is normal and there are bounds for $|F|$ and $\|DF\|$ \cite{PS}.

\medskip

\noindent\textbf{Loewner chains.}\label{par:loewner} A \emph{Loewner chain} in $\B$ is a family $f(z,t)$, $t\ge0$, such that each $f(\cdot,t)$ is biholomorphic in $\B$, $f(0,t)=0$, $Df(0,t)=e^t\Id$, and $f(\B,s)\subset f(\B,t)$ for $s\le t$. The theory was carried over to several variables by Pfaltzgraff \cite{Pf74}. The result we shall use says, in essence, the following \cite[Theorem~8.1.6]{GK}. If $f(z,t)=e^tz+\cdots$ is holomorphic in $z$ and locally absolutely continuous in $t$, the family $\{e^{-t}f(\cdot,t)\}$ is locally bounded, and
\begin{equation}\label{eq:loewnerpde}
\pd{f}{t}(z,t)=Df(z,t)\,h(z,t),
\end{equation}
with $h(\cdot,t)$ holomorphic, $h(0,t)=0$, $Dh(0,t)=\Id$ and $\RE\bigl(h(z,t)\cdot z\bigr)>0$ for $z\ne0$, then $f(z,t)$ is a Loewner chain and in particular $f(\cdot,0)$ is univalent. With this technique Pfaltzgraff obtained the version of Becker's criterion in $\C^n$.

\begin{thm}[Pfaltzgraff \cite{Pf74}; see \cite{GK}, Theorem~8.4.1]\label{thm:pfaltzgraff}
If $F\in\mathcal{LS}(\B)$ satisfies
\[
(1-|z|^2)\,\bigl\|DF(z)^{-1}D^2F(z)(z,\cdot)\bigr\|\le1,\qquad z\in\B,
\]
then $F$ is univalent in $\B$.
\end{thm}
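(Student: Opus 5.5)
The plan is to build the Loewner chain of Becker's classical construction and apply the criterion \eqref{eq:loewnerpde}. Define
\[
f(z,t)=F(e^{-t}z)+(e^{t}-e^{-t})\,DF(e^{-t}z)\,z,\qquad z\in\B,\ t\ge0 .
\]
Then $f(0,t)=0$ and $Df(0,t)=e^{-t}\Id+(e^t-e^{-t})\Id=e^t\Id$, so the normalization of a Loewner chain holds, and $f(\cdot,0)=F$. The conclusion will follow from \cite[Theorem~8.1.6]{GK}. For that we must check three things: that $Df(z,t)$ is invertible, that $h=Df^{-1}\partial_tf$ has positive real part against $z$, and that $\{e^{-t}f(\cdot,t)\}$ is locally bounded.

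\textbf{Computing $Df$ and $\partial_t f$.} Write $w=e^{-t}z$, $A=DF(w)$, and $E(v)=A^{-1}D^2F(w)(w,v)$. A direct differentiation gives
\[
Df(z,t)=e^{t}A\bigl[\Id-(1-e^{-2t})(\Id-E)\bigr]\cdot(\text{correction})\,.
\]
The precise bookkeeping is the following. We have
\[
Df(z,t)(v)=e^{-t}Av+(e^t-e^{-t})\bigl[e^{-t}D^2F(w)(v,z)+Av\bigr],
\]
so that $Df=e^tA\bigl[\Id+(1-e^{-2t})E\bigr]$, using $e^{-t}z=w$ and the symmetry of $D^2F$. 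Similarly,
\[
\partial_tf=-e^{-t}Az+(e^t+e^{-t})Az-(e^t-e^{-t})e^{-t}D^2F(w)(z,z)=e^tA\bigl[(1+e^{-2t})\Id-(1-e^{-2t})E\bigr]z .
\]
Setting $s=1-e^{-2t}\in[0,1)$, this gives
\[
h(z,t)=\bigl[\Id+sE\bigr]^{-1}\bigl[(2-s)\Id-sE\bigr]z .
\]

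\textbf{The key estimate.} The hypothesis reads $(1-|w|^2)\|E\|\le1$. Since $|w|^2=e^{-2t}|z|^2$, we get
\[
s\|E\|\le\frac{1-e^{-2t}}{1-e^{-2t}|z|^2}<1
\]
for $|z|<1$. Hence $\Id+sE$ is invertible, and therefore $Df$ is invertible.

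For positivity, put $q=(\Id+sE)^{-1}z$. Then $(2-s)z-sEz=2z-s(\Id+E)z$, and it is cleaner to write $h=Bz$ with $B=(\Id+sE)^{-1}((2-s)\Id-sE)$, which equals $-\Id+2(\Id+sE)^{-1}$. So $h=-z+2q$, where $z=q+sEq$. Then
\[
\RE(h\cdot z)=\RE\bigl((q-sEq)\cdot(q+sEq)\bigr)=|q|^2-s^2|Eq|^2 ,
\]
since the cross terms $q\cdot sEq$ and $-sEq\cdot q$ have opposite real parts. This is positive for $q\ne0$, because $s\|E\|<1$. Also $h(0,t)=0$ and $Dh(0,t)=\Id$, since $E=0$ at $z=0$.

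\textbf{Local boundedness.} We have $e^{-t}f=e^{-t}F(e^{-t}z)+(1-e^{-2t})DF(e^{-t}z)z$. On compact subsets of $\B$ the points $e^{-t}z$ stay in a fixed compact set, so $F$ and $DF$ are uniformly bounded there, and local absolute continuity in $t$ is clear. Theorem~8.1.6 of \cite{GK} then makes $f$ a Loewner chain, and $F=f(\cdot,0)$ is univalent.

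The main obstacle is the algebraic identification of $h$ as the Cayley-type expression $-\Id+2(\Id+sE)^{-1}$ applied to $z$, together with the precise use of the boundary factor $(1-|w|^2)$. Getting the correct choice of chain, namely the coefficient $(e^t-e^{-t})$, is exactly what makes $s\|E\|<1$ fall out of the hypothesis.
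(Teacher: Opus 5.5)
Your approach is the one the paper relies on. The paper gives no proof of its own: it cites \cite[Theorem~8.4.1]{GK} and says Pfaltzgraff obtained the result with the Loewner-chain technique, that is, the chain $f(z,t)=F(e^{-t}z)+(e^t-e^{-t})DF(e^{-t}z)z$ fed into the criterion \eqref{eq:loewnerpde}. Several of your steps are correct as written: the formula $Df=e^tA(\Id+sE)$, the estimate $s\|E\|\le(1-e^{-2t})/(1-e^{-2t}|z|^2)<1$, the identity $\RE(h\cdot z)=|q|^2-s^2|Eq|^2$, and the local boundedness of $e^{-t}f$.

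The computation of $h$, however, contains two errors that happen to cancel. First, in $\partial_tf$ you lost the term $-e^{-t}Az$. Since $-e^{-t}+(e^t+e^{-t})=e^t$, the correct formula is $\partial_tf=e^tA(\Id-sE)z$, not $e^tA\bigl[(1+e^{-2t})\Id-sE\bigr]z$. A quick check exposes this: $f(z,t)=e^tz+O(|z|^2)$ forces $\partial_tf=e^tz+O(|z|^2)$, whereas your version gives $(e^t+e^{-t})z+\cdots$. Your first formula for $h$ would also give $Dh(0,t)=(2-s)\Id$, contradicting the normalization $Dh(0,t)=\Id$ that you assert afterwards.

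Second, the identity $(\Id+sE)^{-1}\bigl((2-s)\Id-sE\bigr)=-\Id+2(\Id+sE)^{-1}$ is false for $s<1$. The right-hand side equals $(\Id+sE)^{-1}(\Id-sE)$, and the two sides differ by $(1-s)(\Id+sE)^{-1}$.

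With the correct $\partial_tf$ you get directly $h=(\Id+sE)^{-1}(\Id-sE)z=-z+2(\Id+sE)^{-1}z$. This is exactly the expression your positivity argument uses. So once you make this repair and delete the garbled first display for $Df$ (the one with ``(correction)''), the proof is complete and coincides with Pfaltzgraff's.
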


The operator $P_F=DF^{-1}D^2F$ that appears here is the \emph{pre-Schwarzian} of $F$. Theorem~\ref{thm:pfaltzgraff} shows that the ``Becker'' side of the theory passes to several variables without major difficulty. The ``Nehari'' side is the one that requires new ideas, and to it we devote the sections that follow.

\section{The Schwarzian derivative in several variables}\label{sec:schwarziana}

Before giving definitions we repeat the parameter count of Section~\ref{sec:modelo}. The Möbius transformations of $\C^n$ form the projective group $PGL(n+1,\C)$, of dimension
\[
(n+1)^2-1=n^2+2n .
\]
A locally biholomorphic mapping has at a point $n$ parameters of order zero, $n^2$ of first order and $n\cdot\tfrac{n(n+1)}{2}$ of second order. An operator of order two whose kernel were determined by the data of order zero and one would have a kernel of dimension $n^2+n$; one of order three, with the data free up to order two, would have a kernel of dimension $n^2+n+\tfrac12n^2(n+1)$. For $n=1$ the second number is $3$, the dimension of the Möbius group, and everything fits. For $n\ge2$, however,
\begin{equation}\label{eq:recuento}
n^2+n\ <\ n^2+2n\ <\ n^2+n+\tfrac12\,n^2(n+1).
\end{equation}
The Möbius transformations have more degrees of freedom than an operator of order two allows, and fewer than an operator of order three allows. Hence there cannot be a Schwarzian that is an operator of definite order with all the data of lower order free. The correct definition will have to be a system of order two that fixes \emph{almost all} the second derivatives and leaves exactly $n$ of them free. We shall see that this is precisely what happens, and which are the $n$ parameters left over.

To answer question (A) we start from the equation and not from the formula. In one variable, the equation $u''+pu=0$ has a solution space of dimension two, and the quotient of two solutions determines a mapping to the projective line. To obtain a mapping with values in $\C^n$, or better, in the projective space of dimension $n$, one needs a solution space of dimension $n+1$. The natural candidates are the linear systems of second order
\begin{equation}\label{eq:sistema}
\pdd{u}{z_i}{z_j}=\sum_{k=1}^nP^k_{ij}(z)\pd{u}{z_k}+P^0_{ij}(z)\,u,\qquad i,j=1,\dots,n,
\end{equation}
with coefficients holomorphic in a domain $\Omega\subset\C^n$, symmetric in $i,j$. A solution of \eqref{eq:sistema} is determined by the $n+1$ numbers $u(z_0)$, $\nabla u(z_0)$, since the system gives the second derivatives in terms of them, and all the others are obtained by differentiating it. The solution space therefore has dimension at most $n+1$, and the system is said to be \emph{completely integrable} when the dimension is exactly $n+1$. It is said to be in \emph{canonical form} \cite{Yos76} if
\begin{equation}\label{eq:canonica}
\sum_{j=1}^nP^j_{ij}=0,\qquad i=1,\dots,n.
\end{equation}

Now let $F=(f_1,\dots,f_n):\Omega\to\C^n$ be locally biholomorphic, and write $w=F(z)$. Following Oda \cite{Oda}, we define for $i,j,k=1,\dots,n$
\begin{equation}\label{eq:oda}
S^k_{ij}F=\sum_{l=1}^n\pdd{f_l}{z_i}{z_j}\,\pd{z_k}{w_l}-\frac{1}{n+1}\Bigl(\delta^k_i\pd{}{z_j}+\delta^k_j\pd{}{z_i}\Bigr)\log J_F,
\end{equation}
and
\begin{equation}\label{eq:S0}
S^0_{ij}F=J_F^{\frac{1}{n+1}}\Bigl[\pdd{}{z_i}{z_j}J_F^{-\frac{1}{n+1}}-\sum_{k=1}^n\pd{}{z_k}J_F^{-\frac{1}{n+1}}\,S^k_{ij}F\Bigr].
\end{equation}
The first term of \eqref{eq:oda} is the $k$-th component of the vector $DF^{-1}D^2F(e_i,e_j)$, and the second is chosen so that \eqref{eq:canonica} holds. Indeed, by \eqref{eq:jacobi}, $\sum_jS^j_{ij}F=\partial_i\log J_F-\frac{1+n}{n+1}\,\partial_i\log J_F=0$. The definition \eqref{eq:S0}, for its part, says simply that $J_F^{-1/(n+1)}$ is a solution of the system with coefficients $S^k_{ij}F$ and $S^0_{ij}F$. The following theorem justifies both definitions.

\begin{thm}[Oda]\label{thm:oda}
Let $\Omega\subset\C^n$ be simply connected.
\begin{enumerate}
\item[(a)] If $F:\Omega\to\C^n$ is locally biholomorphic, the system \eqref{eq:sistema} with $P^k_{ij}=S^k_{ij}F$, $0\le k\le n$, is completely integrable and is in canonical form. The functions
\begin{equation}\label{eq:base}
u_0=J_F^{-\frac{1}{n+1}},\qquad u_i=f_i\,J_F^{-\frac{1}{n+1}}\quad(i=1,\dots,n)
\end{equation}
form a basis of solutions, and $F=(u_1/u_0,\dots,u_n/u_0)$.
\item[(b)] Conversely, if \eqref{eq:sistema} is completely integrable and is in canonical form, and $u_0,\dots,u_n$ is a basis of solutions, then $F=(u_1/u_0,\dots,u_n/u_0)$ is locally biholomorphic where $u_0\ne0$, and $P^k_{ij}=S^k_{ij}F$ for $0\le k\le n$.
\end{enumerate}
\end{thm}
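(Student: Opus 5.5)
For part (a), write $u_0=J_F^{-1/(n+1)}$, so that $u_i=f_iu_0$, and verify directly that each $u_i$ solves the system with $P^k_{ij}=S^k_{ij}F$. For $u_0$ this is the definition \eqref{eq:S0}. For $u_i=f_lu_0$ we expand
\[
\partial_i\partial_j(f_lu_0)=(\partial_i\partial_jf_l)u_0+\partial_if_l\,\partial_ju_0+\partial_jf_l\,\partial_iu_0+f_l\,\partial_i\partial_ju_0 .
\]
The last term is handled by \eqref{eq:S0}: it equals $f_l\bigl(\sum_kS^k_{ij}\partial_ku_0+S^0_{ij}u_0\bigr)$. So it suffices to check that
\[
(\partial_i\partial_jf_l)u_0+\partial_if_l\,\partial_ju_0+\partial_jf_l\,\partial_iu_0=\sum_kS^k_{ij}F\,(\partial_kf_l)\,u_0 .
\]
To do this we use $\partial_ju_0=-\frac{1}{n+1}u_0\,\partial_j\log J_F$. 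Then \eqref{eq:oda} gives
\[
\sum_kS^k_{ij}\partial_kf_l=\partial_i\partial_jf_l-\tfrac{1}{n+1}\bigl(\partial_if_l\,\partial_j\log J_F+\partial_jf_l\,\partial_i\log J_F\bigr),
\]
because $\sum_k\frac{\partial z_k}{\partial w_m}\partial_kf_l=\delta_{ml}$. This is exactly the required identity.

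Canonical form was already checked in the text using \eqref{eq:jacobi}. For complete integrability it remains to show that $u_0,\dots,u_n$ are linearly independent, since the solution space has dimension at most $n+1$. Suppose $c_0u_0+\sum c_iu_i=0$. Dividing by $u_0$ gives $c_0+\sum c_if_i\equiv0$, and differentiating gives $DF^t c=0$, so $c=0$ and then $c_0=0$. Simple connectivity of $\Omega$ is used only to choose a single-valued branch of $J_F^{-1/(n+1)}$. Finally, $u_i/u_0=f_i$ is immediate.

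For part (b), let $u_0,\dots,u_n$ be a basis of solutions and put $F=(u_1/u_0,\dots,u_n/u_0)$ on the set where $u_0\neq0$. The key tool is the Wronskian-type determinant
\[
W=\det\begin{pmatrix}u_0&\cdots&u_n\\ \partial_1u_0&\cdots&\partial_1u_n\\ \vdots&&\vdots\\ \partial_nu_0&\cdots&\partial_nu_n\end{pmatrix}.
\]
We proceed in three steps.
\begin{enumerate}
\item[(i)] $W$ never vanishes. If $W(z_0)=0$, some nontrivial combination $u$ of the basis would have $u(z_0)=0$ and $\nabla u(z_0)=0$. By uniqueness of solutions with given $(u,\nabla u)$ at a point, $u\equiv0$, contradicting independence.
\item[(ii)] $W$ is constant. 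Differentiate $W$ row by row and substitute the system for the second derivatives. Every row is then replaced by a combination of existing rows plus the term $\sum_kP^k_{ik}$ times the original row, so $\partial_iW=\bigl(\sum_kP^k_{ki}\bigr)W$. By \eqref{eq:canonica} this is zero, hence $W$ is constant.
\item[(iii)] $J_F$ is proportional to $u_0^{-(n+1)}$. Using column operations, subtract $f_i$ times the $u_0$ column from the $u_i$ column. This gives $W=u_0^{n+1}J_F$, so $J_F=c\,u_0^{-(n+1)}$ with $c\neq0$, and $F$ is locally biholomorphic.
\end{enumerate}
Consequently $u_0$ is a constant multiple of $J_F^{-1/(n+1)}$, and $u_i=f_iu_0$.

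It remains to identify the coefficients. By part (a), the system with coefficients $S^k_{ij}F$ also has the basis $\{u_0,f_iu_0\}$, up to a scalar. Take the difference of the two systems: the functions $u_0,\dots,u_n$ then satisfy
\[
\sum_k\bigl(P^k_{ij}-S^k_{ij}F\bigr)\partial_ku+\bigl(P^0_{ij}-S^0_{ij}F\bigr)u=0 .
\]
At each point the vectors $(u,\nabla u)$ for $u$ in the basis span $\C^{n+1}$, since $W\neq0$. Hence all differences of coefficients vanish.

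The main obstacle is step (ii), the Wronskian computation, which is where canonical form enters. One must carefully track that the second-order terms contributed by the $P^k_{ij}$ with $k\ne i$ reproduce rows already present in the determinant, leaving only the trace term $\sum_kP^k_{ik}$. The rest is routine algebra.
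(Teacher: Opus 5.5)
Your proposal is correct and is essentially the paper's proof: in (a) the same reduction to the identity $\sum_k S^k_{ij}F\,\partial_k f_l=\partial_{ij}f_l-\frac{1}{n+1}\bigl(\partial_i f_l\,\partial_j\log J_F+\partial_j f_l\,\partial_i\log J_F\bigr)$, with linear independence from the invertibility of $DF$. In (b) it uses the same determinant $W$ (you place the solutions in columns where the paper uses rows, which is immaterial), shown to be nonvanishing, constant by the canonical form, and equal to $u_0^{n+1}J_F$, after which the coefficients are identified because the vectors $(u,\nabla u)$ span $\C^{n+1}$ at each point.
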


\begin{proof}
(a) Let $u_0=J_F^{-1/(n+1)}$, which is a solution by \eqref{eq:S0}. For $u=f_lu_0$ we have
\[
\partial_{ij}u=u_0\,\partial_{ij}f_l+\partial_if_l\,\partial_ju_0+\partial_jf_l\,\partial_iu_0+f_l\,\partial_{ij}u_0 ,
\]
and since $u_0$ is a solution, $u$ will be one if and only if
\begin{equation}\label{eq:clave}
u_0\,\partial_{ij}f_l+\partial_if_l\,\partial_ju_0+\partial_jf_l\,\partial_iu_0=u_0\sum_kS^k_{ij}F\,\partial_kf_l .
\end{equation}
Now $\sum_k\bigl[DF^{-1}D^2F(e_i,e_j)\bigr]_k\,\partial_kf_l=\partial_{ij}f_l$, so that by \eqref{eq:oda}
\[
\sum_kS^k_{ij}F\,\partial_kf_l=\partial_{ij}f_l-\frac{1}{n+1}\bigl(\partial_if_l\,\partial_j\log J_F+\partial_jf_l\,\partial_i\log J_F\bigr),
\]
and \eqref{eq:clave} follows from $\partial_ju_0=-\frac{1}{n+1}u_0\,\partial_j\log J_F$. The $n+1$ functions \eqref{eq:base} are linearly independent, since a relation $c_0+\sum c_if_i\equiv0$ gives, upon differentiation, $\sum c_i\nabla f_i=0$, and $DF$ is invertible. The solution space then has dimension $n+1$.

(b) Consider the determinant $W$ of the matrix of order $n+1$ whose rows are the vectors $(u_j,\nabla u_j)$, $j=0,\dots,n$. It does not vanish, because a solution is determined by its value and its gradient at a point. When $W$ is differentiated with respect to $z_i$ column by column, the column of the $u_j$ contributes nothing, and the column of the $\partial_mu_j$, replaced by means of \eqref{eq:sistema}, contributes $P^m_{im}W$. Thus $\partial_iW=\bigl(\sum_mP^m_{im}\bigr)W=0$ by \eqref{eq:canonica}, and $W$ is constant. On the other hand, writing $u_j=u_0f_j$ and subtracting from each row a multiple of the first, we obtain $W=u_0^{\,n+1}J_F$. Consequently $J_F\ne0$ and $u_0=c\,J_F^{-1/(n+1)}$ with $c$ constant. By part (a), the system associated with $F$ has as solutions $J_F^{-1/(n+1)}(1,f_1,\dots,f_n)$, that is, the multiples of $u_0,\dots,u_n$. The two systems have the same solution space, and a completely integrable system is determined by it. At each point the vectors $(u(z),\nabla u(z))$, with $u$ a solution, fill $\C^{n+1}$, and \eqref{eq:sistema} expresses the Hessian as a linear function of that vector. The coefficients coincide.
\end{proof}

The role played in one variable by $(f')^{-1/2}$ is now played by $J_F^{-1/(n+1)}$, and the exponent is explained by \eqref{eq:jacmobius}. If $M$ is a Möbius transformation with denominator $l_0$, then $J_{M\circ F}^{-1/(n+1)}$ is a constant multiple of $l_0(F)\,J_F^{-1/(n+1)}$, which is a linear combination of the solutions \eqref{eq:base}. From this the two properties we were seeking follow at once.

\begin{cor}\label{cor:mobius}
If $M$ is a Möbius transformation, $S^k_{ij}(M\circ F)=S^k_{ij}F$ for $0\le k\le n$. Moreover, for $n\ge2$,
\[
S^k_{ij}F\equiv0\ \text{ for all }i,j,k\ge1\iff F\ \text{is a Möbius transformation.}
\]
\end{cor}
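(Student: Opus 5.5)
The invariance part comes from Theorem~\ref{thm:oda}(b) applied to the system associated with $F$. Write $M=(l_1/l_0,\dots,l_n/l_0)$ and $G=M\circ F$. By \eqref{eq:jacmobius} and the chain rule, $J_G=J_M(F)\,J_F=\det(a_{ij})\,l_0(F)^{-(n+1)}J_F$. Hence
\[
v_0:=J_G^{-1/(n+1)}=c\,l_0(F)\,J_F^{-1/(n+1)},
\]
with $c$ a constant, and $g_i v_0=c\,l_i(F)\,J_F^{-1/(n+1)}$. Each $l_i(F)J_F^{-1/(n+1)}=a_{i0}u_0+\sum_j a_{ij}u_j$ is a linear combination of the basis \eqref{eq:base}. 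The matrix $(a_{ij})$ is invertible, so the functions $v_0,\,g_1v_0,\dots,g_nv_0$ form another basis of solutions of the system with coefficients $S^k_{ij}F$. That system is completely integrable and in canonical form by Theorem~\ref{thm:oda}(a). Then Theorem~\ref{thm:oda}(b), applied to this basis, gives $S^k_{ij}F=S^k_{ij}G$ for $0\le k\le n$. One point needs care: the branch of the root. Since $\Omega$ is simply connected, $c$ may be chosen so that the identity holds with the given branches. We also restrict to the open set where $l_0(F)\ne0$, where $G$ is defined.

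For the characterization, one direction follows from invariance. The identity has $S^k_{ij}\mathrm{Id}=0$ for all $k$, since $D^2=0$ and $J=1$. So every Möbius $M=M\circ\mathrm{Id}$ has $S^k_{ij}M=0$ for all $i,j,k$.

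For the converse, assume $S^k_{ij}F\equiv0$ for $k\ge1$. The key step, and the one where $n\ge2$ must enter, is to show that $S^0_{ij}F\equiv0$ as well. The idea is to use the integrability conditions of the system: complete integrability (Theorem~\ref{thm:oda}(a)) forces the compatibility $\partial_l(\partial_{ij}u)=\partial_j(\partial_{il}u)$ on every solution. With $P^k_{ij}=0$ for $k\ge1$, the system reads $\partial_{ij}u=P^0_{ij}u$. Differentiating gives
\[
\partial_{ijl}u=\partial_lP^0_{ij}\,u+P^0_{ij}\,\partial_lu .
\]
Symmetry in $j,l$ then requires
\[
(\partial_lP^0_{ij}-\partial_jP^0_{il})\,u+P^0_{ij}\,\partial_lu-P^0_{il}\,\partial_ju=0 .
\]
Since the values $(u,\nabla u)$ at a point fill $\C^{n+1}$, each coefficient must vanish. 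Comparing the coefficient of $\partial_lu$ we get $P^0_{ij}\delta_{ml}=P^0_{il}\delta_{mj}$ for all $m$. Choosing $l=m\ne j$, which is possible because $n\ge2$, gives $P^0_{ij}=0$ for all $i,j$. This is exactly where the hypothesis $n\ge2$ is used; for $n=1$ the argument fails, as it must.

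So the system reduces to $\partial_{ij}u=0$, whose solution space is spanned by $1,z_1,\dots,z_n$. By Theorem~\ref{thm:oda}(a) the functions $u_0=J_F^{-1/(n+1)}$ and $u_i=f_iu_0$ are affine functions $l_0,l_i$. Their coefficient matrix is invertible because they are independent. Therefore $F=(l_1/l_0,\dots,l_n/l_0)$ is a Möbius transformation.

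I expect the only delicate step to be the integrability computation above. The rest follows directly from Theorem~\ref{thm:oda}.
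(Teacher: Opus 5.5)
Your proposal is correct and follows essentially the same route as the paper. For invariance, both arguments observe that $J_{M\circ F}^{-1/(n+1)}$ is a constant multiple of $l_0(F)\,J_F^{-1/(n+1)}$, so $F$ and $M\circ F$ have the same solution space and hence the same coefficients. For the converse, both equate $\partial_l(\partial_{ij}u)$ with $\partial_j(\partial_{il}u)$ for some $l\ne j$ (this is where $n\ge2$ is used), which forces $S^0_{ij}F=0$, reduces the system to $\Hess u=0$, and exhibits $F$ as a quotient of affine functions.
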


\begin{proof}
The systems associated with $F$ and with $M\circ F$ have the same solution space, hence the same coefficients. For $F(z)=z$ all the coefficients vanish, and therefore they also vanish for every Möbius transformation. Conversely, suppose $S^k_{ij}F=0$ for $k\ge1$, so that the system is $\partial_{ij}u=S^0_{ij}F\,u$. Equating $\partial_k(\partial_{ij}u)$ with $\partial_j(\partial_{ik}u)$ for an index $k\ne j$, which is possible because $n\ge2$, and comparing the coefficients of $\partial_ku$, which at a given point is arbitrary, we obtain $S^0_{ij}F=0$. The system then reduces to $\Hess u=0$, its solutions are the affine functions, and $F$ is a quotient of affine functions.
\end{proof}

\begin{cor}\label{cor:soluciones}
Every nontrivial solution $u$ of the system associated with $F$ is of the form $u=J_G^{-1/(n+1)}$ with $G=T\circ F$ and $T$ a Möbius transformation. The mapping $G$ is holomorphic off the zero set of $u$.
\end{cor}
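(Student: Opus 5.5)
By Theorem~\ref{thm:oda}(a), $u_0=J_F^{-1/(n+1)}$ and $u_i=f_iu_0$ form a basis of solutions. So every nontrivial solution can be written
\[
u=c_0u_0+\sum_{i=1}^nc_iu_i=u_0\,l_0(F),\qquad l_0(w)=c_0+c_1w_1+\cdots+c_nw_n,
\]
where $(c_0,\dots,c_n)\ne0$ and $l_0$ is a nonzero affine function. The plan is to complete $l_0$ to a projective frame and read $u$ as the normalized Jacobian power of the corresponding Möbius image of $F$.

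\textbf{Choice of $T$.} Because $(c_0,\dots,c_n)\ne0$, we can choose affine functions $l_1,\dots,l_n$ so that the $(n+1)\times(n+1)$ coefficient matrix $(a_{ij})$ of $l_0,\dots,l_n$ is invertible. Let $T=(l_1/l_0,\dots,l_n/l_0)$ be the resulting Möbius transformation \eqref{eq:mobius}, and set $G=T\circ F$.

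\textbf{Holomorphy of $G$.} The zero set of $u$ is the zero set of $l_0\circ F$, since $u_0$ never vanishes. Off this set $G$ is holomorphic, as the second claim states.

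\textbf{Jacobian of $G$.} Off the zero set, the chain rule and \eqref{eq:jacmobius} give
\[
J_G=J_T(F)\,J_F=\det(a_{ij})\,l_0(F)^{-(n+1)}J_F .
\]
Hence $J_G^{-1/(n+1)}=\lambda\, l_0(F)\,J_F^{-1/(n+1)}=\lambda u$, where $\lambda$ is a constant $(n+1)$-th root of $\det(a_{ij})^{-1}$. It remains to make $\lambda=1$ and to control the branch.
\begin{itemize}
\item We normalize $\det(a_{ij})=1$ by rescaling one of $l_1,\dots,l_n$. This does not affect $l_0$, so the constant is absorbed.
\item The identity $J_G^{-1/(n+1)}=u$ then holds up to an $(n+1)$-th root of unity. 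We resolve this by \emph{defining} the branch of $J_G^{-1/(n+1)}$ as $u$ itself, which is legitimate because $u^{-(n+1)}=J_G$ by the computation above. The branch of $J_F^{-1/(n+1)}$ is the one already fixed in Theorem~\ref{thm:oda}, using simple connectivity.
\end{itemize}

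\textbf{Main obstacle.} The only real subtlety is this bookkeeping: the root of unity and the normalization of $\det(a_{ij})$, together with the fact that $G$ is only meromorphic, so that $J_G^{-1/(n+1)}$ should be read as $u$ extended holomorphically across $\{u=0\}$. A remark in the proof will note that the converse statement in the paragraph before Corollary~\ref{cor:mobius} is exactly this computation run backwards.
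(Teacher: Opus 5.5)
Your proof is correct and takes essentially the paper's route. The paper derives the corollary from the remark just before Corollary~\ref{cor:mobius}, that $J_{M\circ F}^{-1/(n+1)}$ is a constant multiple of $l_0(F)\,J_F^{-1/(n+1)}$; you run that observation backwards by completing $l_0$ to a Möbius transformation with denominator $l_0$, and you absorb the constant by normalizing $\det(a_{ij})=1$.
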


The count \eqref{eq:recuento} is now explained. The $S^k_{ij}F$ with $k\ge1$ are $n\cdot\tfrac{n(n+1)}{2}$ functions subject to the $n$ relations \eqref{eq:canonica}, so that the system $S^k_{ij}F=0$ consists of $\tfrac12n(n-1)(n+2)$ independent equations of second order. It fixes all the second derivatives of $F$ at a point except for $n$ combinations, the components of $\nabla\log J_F$, which the second term of \eqref{eq:oda} deliberately leaves free. The initial data of a solution are then $F(z_0)$, $DF(z_0)$ and $\nabla\log J_F(z_0)$, in total
\[
n+n^2+n=n^2+2n
\]
parameters, which is the dimension of the Möbius group. The Schwarzian in several variables is an operator of second order that does not see the trace, and for that reason it lies between the two extremes of \eqref{eq:recuento}.

For a general composition one has the chain rule \cite{Oda}
\begin{equation}\label{eq:cadena}
S^k_{ij}(G\circ F)(z)=S^k_{ij}F(z)+\sum_{l,m,r=1}^nS^r_{lm}G(w)\,\pd{w_l}{z_i}\pd{w_m}{z_j}\pd{z_k}{w_r},\qquad w=F(z).
\end{equation}

\begin{obs}[The case $n=1$]\label{obs:n1}
For $n=1$ formula \eqref{eq:oda} gives $S^1_{11}f=f''/f'-f''/f'=0$ for every $f$, and the system reduces to $u''=S^0_{11}f\,u$ with $u_0=(f')^{-1/2}$. Therefore
\[
S^0_{11}f=-\tfrac12\,Sf .
\]
The classical Schwarzian is the coefficient of \emph{order zero}, an operator of third order. For $n\ge2$ the situation is reversed. The $S^k_{ij}F$ with $k\ge1$ are operators of second order, nontrivial, and it is they that characterize the Möbius transformations. The $S^0_{ij}F$ are no longer free data. The equality of the mixed derivatives $\partial_k(\partial_{ij}u)=\partial_j(\partial_{ik}u)$ determines them in terms of the former \cite{H07}, through the formulas
\begin{align}
S^0_{ii}F&=\frac{1}{n-1}\Bigl[\sum_{j,k=1}^nS^k_{ij}F\,S^j_{ki}F-\sum_{k=1}^n\pd{}{z_k}S^k_{ii}F\Bigr],\label{eq:S0ii}\\
S^0_{ij}F&=\pd{}{z_j}S^i_{ii}F-\pd{}{z_i}S^i_{ij}F+\sum_{k=1}^n\bigl(S^k_{ii}F\,S^i_{kj}F-S^k_{ij}F\,S^i_{ki}F\bigr),\qquad j\ne i.\label{eq:S0ij}
\end{align}
The factor $1/(n-1)$ shows why the case $n=1$ is excluded. For $n=1$ there are no equations of second order, the only second derivative remains free, and an operator of third order is needed; this is the case of equality in \eqref{eq:recuento}. In several variables, by contrast, the information lies in an object of second order, closer to a pre-Schwarzian than to a Schwarzian, and this will have to be kept in mind when interpreting the norms.
\end{obs}

\medskip

\noindent\textbf{The operator $S_F$.} The coefficients $S^k_{ij}F$ depend on the coordinates, and in order to measure them it is convenient to gather them into an intrinsic object. For each $k\ge0$ let $S^kF=(S^k_{ij}F)_{i,j}$, a symmetric $n\times n$ matrix.

\begin{defn}[\cite{H06}]\label{def:operador}
The \emph{Schwarzian operator} of $F$ at $z$ is the symmetric bilinear mapping
\[
S_F(z)(v,w)=\bigl(v^tS^1F(z)w,\dots,v^tS^nF(z)w\bigr),\qquad v,w\in\C^n .
\]
We also set $S^0_F(z)(v,w)=v^tS^0F(z)w$, with scalar values.
\end{defn}

Reading \eqref{eq:oda} component by component we obtain the expression
\begin{equation}\label{eq:SP}
S_F(z)(v,w)=P_F(z)(v,w)-\frac{1}{n+1}\Bigl[\bil{\nabla\log J_F(z)}{v}\,w+\bil{\nabla\log J_F(z)}{w}\,v\Bigr],
\end{equation}
where $P_F=DF^{-1}D^2F$ is the pre-Schwarzian. This formula is the most convenient for computation, and it has a clear reading. By \eqref{eq:canonica}, $S_F$ is the trace-free part of $P_F$. What has been removed from the pre-Schwarzian is exactly what changes when $F$ is composed with a Möbius transformation. The system of Oda can now be written compactly,
\begin{equation}\label{eq:sistemacompacto}
\Hess u(z)(v,v)=\bil{\nabla u(z)}{S_F(z)(v,v)}+S^0_F(z)(v,v)\,u(z),
\end{equation}
and the chain rule \eqref{eq:cadena} takes the form
\begin{equation}\label{eq:cadenaop}
S_{G\circ F}(z)(v,w)=S_F(z)(v,w)+DF(z)^{-1}S_G(F(z))\bigl(DF(z)v,DF(z)w\bigr).
\end{equation}
In particular, if $\sigma$ is a Möbius transformation,
\begin{equation}\label{eq:precomp}
S_{F\circ\sigma}(z)(v,w)=D\sigma(z)^{-1}S_F(\sigma(z))\bigl(D\sigma(z)v,D\sigma(z)w\bigr).
\end{equation}
This identity says that $S_F$ transforms as a tensor of type $(1,2)$ under Möbius changes of variable, that is, it takes two tangent vectors at $z$ and returns one.

Two examples, which we shall use repeatedly, show what $S_F$ sees and what it does not.

\begin{ex}\label{ex:producto}
Let $F(z)=(f(z_1),z_2,\dots,z_n)$ with $f'\ne0$. Then $J_F=f'(z_1)$ and \eqref{eq:SP} gives
\[
S^1_{11}F=\frac{n-1}{n+1}\,\frac{f''}{f'}(z_1),\qquad S^i_{1i}F=-\frac{1}{n+1}\,\frac{f''}{f'}(z_1)\quad(i\ge2),
\]
and the remaining coefficients vanish. Here the operator $S_F$ records the pre-Schwarzian of $f$ and not its Schwarzian.
\end{ex}

\begin{ex}\label{ex:RS}
The extension operator of Roper and Suffridge \cite{RS} associates with a locally univalent function $f$ of the disk, with $f(0)=0$ and $f'(0)=1$, the mapping
\begin{equation}\label{eq:RS}
\Phi_f(z)=\bigl(f(z_1),\sqrt{f'(z_1)}\,z_2,\dots,\sqrt{f'(z_1)}\,z_n\bigr).
\end{equation}
Here $J_{\Phi_f}=(f')^{(n+1)/2}$, and a computation with \eqref{eq:SP} shows that the only nonzero coefficients are
\[
S^k_{11}\Phi_f(z)=\tfrac12\,z_k\,Sf(z_1),\qquad k=2,\dots,n.
\]
Now the classical Schwarzian does appear, and $S_{\Phi_f}\equiv0$ exactly when $f$ is a Möbius transformation.
\end{ex}

\medskip

\noindent\textbf{The Schwarzian as curvature.}\label{sec:curvatura} Corollary~\ref{cor:mobius} has a geometric reading. Möbius transformations carry hyperplanes to hyperplanes, so that if $S_F\equiv0$ the preimage under $F$ of every hyperplane is a piece of a hyperplane. One expects that, in general, $S_F$ measures how much those preimages curve. This is so, and the relation is an identity.

Let $H=\{w\in\C^n:\ a_0+a_1w_1+\dots+a_nw_n=0\}$ be a hyperplane that meets $F(\Omega)$. Since $u_0$ does not vanish, the formulas \eqref{eq:base} give
\begin{equation}\label{eq:Sigma}
\Sigma=F^{-1}(H)=\{z\in\Omega:\ u(z)=0\},\qquad u=a_0u_0+a_1u_1+\dots+a_nu_n .
\end{equation}
The preimages of the hyperplanes are therefore the zero sets of the solutions of the system associated with $F$. The gradient of $u$ does not vanish at any point of $\Sigma$, because a solution with $u(z_0)=0$ and $\nabla u(z_0)=0$ is identically zero. Hence $\Sigma$ is a smooth complex hypersurface. Its tangent space at $z$ is $\{v:\bil{\nabla u(z)}{v}=0\}$, and its normal space is the complex line spanned by the unit vector
\[
N(z)=\frac{\overline{\nabla u(z)}}{|\nabla u(z)|} .
\]
We regard $\Sigma$ as a real submanifold of $\C^n=\R^{2n}$ with the Euclidean metric. Given $z\in\Sigma$ and a unit tangent vector $v$, let $\gamma$ be the geodesic of $\Sigma$ with $\gamma(0)=z$ and $\gamma'(0)=v$, parametrized by arc length. Its curvature as a curve in $\R^{2n}$,
\[
\kappa_\Sigma(z,v)=|\gamma''(0)|,
\]
is the \emph{extrinsic curvature} of $\Sigma$ in the direction $v$, and it measures how fast $\Sigma$ departs from its tangent plane as one moves in that direction.

\begin{thm}\label{thm:curvatura}
Let $F:\Omega\to\C^n$ be locally biholomorphic, $n\ge2$, and let $\Sigma=F^{-1}(H)$ be as in \eqref{eq:Sigma}. For every $z\in\Sigma$ and every unit vector $v$ tangent to $\Sigma$ at $z$,
\begin{equation}\label{eq:curvatura}
\kappa_\Sigma(z,v)=\frac{\bigl|\bil{\nabla u(z)}{S_F(z)(v,v)}\bigr|}{|\nabla u(z)|} .
\end{equation}
That is, $\kappa_\Sigma(z,v)$ is the modulus of the component of $S_F(z)(v,v)$ in the normal direction $N(z)$. In particular,
\begin{equation}\label{eq:curvaturacota}
\kappa_\Sigma(z,v)\le|S_F(z)(v,v)| .
\end{equation}
\end{thm}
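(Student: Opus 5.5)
The plan is to differentiate the defining equation of $\Sigma$ twice along the geodesic and then use Oda's system \eqref{eq:sistemacompacto}.

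Let $\gamma$ be the geodesic of $\Sigma$ with $\gamma(0)=z$, $\gamma'(0)=v$, parametrized by arc length. For a curve in a real submanifold of $\R^{2n}$, being a geodesic of the induced metric means exactly that $\gamma''(t)$ is normal to $\Sigma$ at every $t$. Since $\Sigma$ is a complex hypersurface, its real normal space at $z$ is the real $2$-plane spanned by $N(z)$ and $iN(z)$, that is, the complex line $\C N(z)$. Thus $\gamma''(0)=\lambda N(z)$ for some $\lambda\in\C$, and $\kappa_\Sigma(z,v)=|\lambda|$.

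Next we identify $\lambda$ from the identity $u(\gamma(t))\equiv0$. Since $u$ is holomorphic, $\frac{d}{dt}u(\gamma(t))=\bil{\nabla u(\gamma)}{\gamma'}$; here $\gamma'$ is a real-parameter derivative, but the chain rule for a holomorphic $u$ involves only the $\partial/\partial z$ components, so the bilinear pairing is correct. Differentiating once more gives
\[
0=\Hess u(z)(v,v)+\bil{\nabla u(z)}{\gamma''(0)} .
\]
Now substitute \eqref{eq:sistemacompacto} at the point $z$. Since $u(z)=0$, the term $S^0_F(z)(v,v)\,u(z)$ drops out, and we obtain
\[
\bil{\nabla u(z)}{\gamma''(0)}=-\bil{\nabla u(z)}{S_F(z)(v,v)}.
\]
On the other hand, $\bil{\nabla u}{N}=\nabla u\cdot\overline{\nabla u}/|\nabla u|=|\nabla u|$, so the left side equals $\lambda|\nabla u(z)|$. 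Hence
\[
\lambda=-\frac{\bil{\nabla u(z)}{S_F(z)(v,v)}}{|\nabla u(z)|},
\]
and taking moduli gives \eqref{eq:curvatura}. The interpretation also follows: for any $w$, the Hermitian product $w\cdot N$ equals $\bil{\nabla u}{w}/|\nabla u|$, so the expression on the right of \eqref{eq:curvatura} is the modulus of the $N$-component of $S_F(z)(v,v)$. The inequality \eqref{eq:curvaturacota} is then Cauchy--Schwarz.

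I expect the main point needing care to be the first step: justifying that $\gamma''(0)$ lies in the complex normal line. This requires the real normal space of a complex hypersurface to be $\C N$, which holds because its tangent space is a complex subspace and therefore $J$-invariant. It also requires keeping straight the relation between the real chain rule and the holomorphic bilinear pairing. The fact that $u(z)=0$ kills the $S^0_F$ term is exactly what makes the answer depend on $S_F$ alone. Note also that $\nabla u(z)\neq0$ on $\Sigma$, as shown above, so all the divisions are legitimate.
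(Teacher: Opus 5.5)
Your proof is correct and follows essentially the same route as the paper: differentiate $u(\gamma(t))=0$ twice, use that the geodesic acceleration lies in the complex normal line $\C N(z)$, and eliminate the Hessian with Oda's system \eqref{eq:sistemacompacto}, where $u(z)=0$ kills the $S^0_F$ term. Your added justifications (that the real normal space of a complex hypersurface is $\C N$, and that the holomorphic chain rule yields the bilinear pairing) fill in details the paper leaves implicit.
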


\begin{proof}
Since $u$ is holomorphic and vanishes along $\gamma$, differentiating $u(\gamma(t))=0$ twice we obtain
\[
\bil{\nabla u(\gamma)}{\gamma'}=0,\qquad\Hess u(\gamma)(\gamma',\gamma')+\bil{\nabla u(\gamma)}{\gamma''}=0 .
\]
The acceleration of a geodesic is normal to the surface, so that $\gamma''(0)=c\,N(z)$ with $c\in\C$, and then $\bil{\nabla u(z)}{\gamma''(0)}=c\,|\nabla u(z)|$. Therefore
\[
\kappa_\Sigma(z,v)=|c|=\frac{|\Hess u(z)(v,v)|}{|\nabla u(z)|} .
\]
On the other hand $u(z)=0$, and the system \eqref{eq:sistemacompacto} reduces at that point to $\Hess u(z)(v,v)=\bil{\nabla u(z)}{S_F(z)(v,v)}$. This proves \eqref{eq:curvatura}, and \eqref{eq:curvaturacota} is the Cauchy--Schwarz inequality.
\end{proof}

\begin{figure}[ht]
\centering
\begin{tikzpicture}[>=stealth,scale=1]
\draw[gray!70] (0,0) circle (2.1);
\node[gray!60!black] at (-1.55,1.75) {$\Omega$};
\draw[thick] plot[smooth,tension=.8] coordinates {(-1.75,-1.05) (-0.9,-0.15) (0,0.25) (0.95,0.1) (1.85,-0.75)};
\node at (1.5,-0.95) {$\Sigma$};
\fill (0,0.25) circle (1.4pt);
\node[below] at (0,0.2) {$z$};
\draw[->] (0,0.25)--(1.15,0.37) node[right] {$v$};
\draw[->,dashed] (0,0.25)--(-0.13,1.45) node[above] {$N(z)$};
\draw[->,thick] (0,0.25)--(0.85,1.25);
\node[right] at (0.85,1.3) {$S_F(z)(v,v)$};
\draw[densely dotted] (0.85,1.25)--(-0.105,1.215);
\draw[very thick] (0,0.25)--(-0.105,1.215);
\node[left] at (-0.1,0.8) {$\kappa_\Sigma$};
\draw[->] (2.6,0.3) to[bend left=18] node[above] {$F$} (4.3,0.3);
\draw[gray!70] plot[smooth cycle,tension=.75] coordinates {(4.9,-0.3) (5.5,1.5) (7.2,1.9) (8.6,1.0) (8.5,-0.9) (7.0,-1.8) (5.5,-1.4)};
\node[gray!60!black] at (6.25,1.0) {$F(\Omega)$};
\draw[thick] (4.75,-1.25)--(8.95,0.85);
\node at (9.2,0.95) {$H$};
\fill (6.85,-0.2) circle (1.4pt);
\node[below right] at (6.85,-0.2) {$F(z)$};
\end{tikzpicture}
\caption{The preimage $\Sigma=F^{-1}(H)$ of a hyperplane. Its extrinsic curvature at $z$, in the direction $v$, is the normal component of $S_F(z)(v,v)$.}\label{fig:curvatura}
\end{figure}

Three remarks on the proof. The first is that the coefficient of order zero disappears because $u$ vanishes on $\Sigma$, so that the curvature of the preimages depends only on the $S^k_{ij}F$ with $k\ge1$, which for $n\ge2$ are the ones that characterize the Möbius transformations. The second is that in \eqref{eq:curvatura} one can replace $S_F$ by the pre-Schwarzian $P_F$. By \eqref{eq:SP}, the two vectors $S_F(z)(v,v)$ and $P_F(z)(v,v)$ differ by a multiple of $v$, which is tangent to $\Sigma$ and has no normal component. The curvature sees only the vector $S_F(z)(v,v)$ modulo $v$. The third is that $\kappa_\Sigma(z,e^{i\theta}v)=\kappa_\Sigma(z,v)$, that is, the curvature depends on the complex direction and not on the real direction within it.

With $z\in\Omega$ and a unit vector $v$ fixed, every vector $a\ne0$ with $\bil{a}{v}=0$ is the gradient at $z$ of a solution that vanishes at $z$, unique up to a constant factor. Hence there is a preimage of a hyperplane through $z$ tangent to $v$ for each admissible normal direction, and letting $a$ vary in \eqref{eq:curvatura} we obtain
\begin{equation}\label{eq:curvmax}
\max_{\Sigma}\,\kappa_\Sigma(z,v)=\bigl|S_F(z)(v,v)-\lambda v\bigr|,\qquad\lambda=\bil{S_F(z)(v,v)}{\overline v},
\end{equation}
where the maximum is taken over the preimages of hyperplanes that pass through $z$ and are tangent to $v$. The right-hand side is the distance from $S_F(z)(v,v)$ to the line $\C v$. To know how much information about $S_F$ these curvatures contain, the following algebraic lemma then suffices, in which the condition \eqref{eq:canonica} appears once more.

\begin{lem}\label{lem:modv}
Let $n\ge2$ and let $Q:\C^n\times\C^n\to\C^n$ be bilinear and symmetric, with coefficients $Q^k_{ij}$ such that $\sum_jQ^j_{ij}=0$ for all $i$. If $Q(v,v)$ is a multiple of $v$ for every $v\in\C^n$, then $Q=0$.
\end{lem}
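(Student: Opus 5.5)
The plan is to show first that the hypothesis forces $Q$ to have the ``pure trace'' form
\[
Q(v,w)=\lambda(v)\,w+\lambda(w)\,v
\]
for some linear form $\lambda$. Once that is established, the trace condition will kill $\lambda$, and hence $Q$.

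\textbf{Step 1: a scalar function.} Since $Q(v,v)\in\C v$ for every $v$, we may write $Q(v,v)=\mu(v)\,v$ for $v\ne0$. To see that $\mu$ is linear, fix indices $i\ne m$; this is possible because $n\ge2$. Comparing the $i$-th and $m$-th components of $Q(v,v)=\mu(v)v$ gives the polynomial identity
\[
v_m\,Q^i(v,v)=v_i\,Q^m(v,v),
\]
where $Q^k(v,v)=\sum_{j,l}Q^k_{jl}v_jv_l$ is a quadratic form. Reading this identity in the polynomial ring, $v_i$ divides $v_m Q^i(v,v)$, and $v_i,v_m$ are coprime, so $v_i$ divides $Q^i(v,v)$. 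Set $\ell(v)=Q^i(v,v)/v_i$, a linear form.

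\textbf{Step 2: $\ell$ does not depend on $i$.} The identity in Step 1 gives $Q^m(v,v)=v_m\,\ell(v)$ for every $m\ne i$, and $Q^i(v,v)=v_i\,\ell(v)$ by definition. Hence $Q(v,v)=\ell(v)\,v$ with a single linear form $\ell$, which is what the case $n\ge2$ buys us. For $n=1$ this step carries no content, and indeed the statement is false there.

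\textbf{Step 3: polarization.} Write $\ell(v)=2\bil{b}{v}$ for some vector $b$. Since $Q$ is symmetric, polarizing $Q(v,v)=2\bil{b}{v}\,v$ yields
\[
Q(v,w)=\bil{b}{v}\,w+\bil{b}{w}\,v .
\]
In coordinates this reads $Q^k_{ij}=b_i\delta^k_j+b_j\delta^k_i$.

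\textbf{Step 4: the trace condition.} Computing the trace,
\[
\sum_jQ^j_{ij}=\sum_j\bigl(b_i\delta^j_j+b_j\delta^j_i\bigr)=n\,b_i+b_i=(n+1)\,b_i .
\]
By hypothesis this vanishes, so $b=0$ and therefore $Q=0$.

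The only step that needs care is Step 1, namely passing from the pointwise statement ``$Q(v,v)$ is a multiple of $v$'' to a multiplier $\mu$ that is linear in $v$. The divisibility argument in the polynomial ring handles this cleanly. Alternatively, one can argue that $\mu$ is homogeneous of degree $1$ and holomorphic on $\{v_i\ne0\}$, and extends across $\{v_i=0\}$.
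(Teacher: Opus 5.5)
Your proof is correct and follows the paper's argument essentially step for step: the polynomial identity $v_mQ^i(v,v)=v_iQ^m(v,v)$, divisibility to get $Q(v,v)=\ell(v)\,v$ with a single linear form $\ell$, polarization to $Q^k_{ij}=b_i\delta^k_j+b_j\delta^k_i$, and the trace condition giving $(n+1)b=0$. One small correction to your aside: for $n=1$ the statement is not false but trivially true, because the trace condition by itself says $Q^1_{11}=0$ (and $Q(v,v)=Q^1_{11}v^2$ is automatically a multiple of $v$, so the conclusion of Step~2 also holds there).
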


\begin{proof}
Write $Q^k(v)$ for the $k$-th component of $Q(v,v)$, which is a homogeneous polynomial of degree two. The hypothesis is equivalent to $Q^k(v)\,v_i=Q^i(v)\,v_k$ for all $i,k$. If $i\ne k$, the polynomial $v_k$ divides $Q^k(v)\,v_i$ and therefore $Q^k(v)$, so that $Q^k(v)=\ell_k(v)\,v_k$ with $\ell_k$ linear; substituting, $\ell_k=\ell_i$. Thus $Q(v,v)=\ell(v)\,v$ with $\ell(v)=\sum_i\ell_iv_i$, and by polarization $Q^k_{ij}=\tfrac12(\ell_i\delta^k_j+\ell_j\delta^k_i)$. Then $\sum_jQ^j_{ij}=\tfrac12(n+1)\ell_i$, and the hypothesis on the trace gives $\ell=0$.
\end{proof}

\begin{cor}\label{cor:curvatura}
Let $F:\Omega\to\C^n$ be locally biholomorphic, $n\ge2$, and $z\in\Omega$. Then $S_F(z)=0$ if and only if every preimage of a hyperplane that passes through $z$ has zero extrinsic curvature at $z$, in all tangent directions. Consequently, $F$ is a Möbius transformation if and only if the preimages of the hyperplanes are totally geodesic.
\end{cor}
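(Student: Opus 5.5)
The plan is to combine the curvature formula \eqref{eq:curvmax} with Lemma~\ref{lem:modv} pointwise, and then to pass from the pointwise statement to the global one by Corollary~\ref{cor:mobius}.

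Fix $z\in\Omega$. If $S_F(z)=0$, then for every preimage $\Sigma$ of a hyperplane through $z$ and every unit tangent $v$, Theorem~\ref{thm:curvatura} gives $\kappa_\Sigma(z,v)=0$. Conversely, suppose all such curvatures vanish at $z$. Take any unit vector $v$. As remarked before \eqref{eq:curvmax}, each admissible normal $a\neq0$ with $\bil{a}{v}=0$ is the gradient at $z$ of a solution vanishing at $z$. Its zero set is a preimage of a hyperplane through $z$, tangent to $v$. Hence the maximum in \eqref{eq:curvmax} is over a nonempty set and equals $0$, so $S_F(z)(v,v)\in\C v$.

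One point needs care here: that the hypersurfaces in \eqref{eq:curvmax} really are preimages $F^{-1}(H)$. A solution $u=a_0u_0+\sum a_iu_i$ vanishes exactly on $F^{-1}(H)$ with $H=\{a_0+\sum a_iw_i=0\}$, and $\nabla u(z)=a$ can be prescribed by complete integrability (Theorem~\ref{thm:oda}). Since this holds for every $v$ (by homogeneity the unit restriction is harmless), and since $S_F(z)$ is symmetric with trace condition \eqref{eq:canonica}, Lemma~\ref{lem:modv} applied to $Q=S_F(z)$ gives $S_F(z)=0$.

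For the global statement, ``totally geodesic'' means that the second fundamental form vanishes, i.e.\ $\kappa_\Sigma\equiv0$ at every point and in every direction. So the preimages of hyperplanes are all totally geodesic if and only if $S_F(z)=0$ for every $z\in\Omega$, by the pointwise part. By Corollary~\ref{cor:mobius}, this holds if and only if $F$ is a Möbius transformation. If $\Omega$ is not simply connected, the system is local and the argument applies on simply connected subdomains, with analytic continuation giving a single Möbius map on the connected domain $\Omega$.

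The only mildly delicate step is the identification of the curvature with the second fundamental form. Extrinsic curvature along geodesics in every direction vanishing is equivalent to $\mathrm{II}(v,v)=0$ for all $v$, hence $\mathrm{II}=0$ by polarization. This is standard, since $\gamma''(0)=\mathrm{II}(v,v)$ for a geodesic $\gamma$.
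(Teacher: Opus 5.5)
Your proof is correct and follows the paper's argument. Formula \eqref{eq:curvmax} forces $S_F(z)(v,v)\in\C v$ for every $v$; Lemma~\ref{lem:modv}, with the trace condition \eqref{eq:canonica}, then gives $S_F(z)=0$; the converse is \eqref{eq:curvaturacota}; and the global statement follows from Corollary~\ref{cor:mobius}. The paper leaves implicit the extra points you address, namely that each admissible normal is realized by a genuine preimage $F^{-1}(H)$, that ``totally geodesic'' means the second fundamental form vanishes, and the case of non-simply-connected $\Omega$; your handling of them is sound.
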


\begin{proof}
If the curvatures vanish, \eqref{eq:curvmax} says that $S_F(z)(v,v)$ is a multiple of $v$ for every $v$, and Lemma~\ref{lem:modv} applies to $Q=S_F(z)$, which is trace-free by \eqref{eq:canonica}. The converse is \eqref{eq:curvaturacota}, and the last assertion follows from Corollary~\ref{cor:mobius}.
\end{proof}

The lemma says something more. The mapping $v\mapsto S_F(z)(v,v)$ modulo $v$, which is what the curvatures see, determines the operator $S_F(z)$. The trace-free normalization, which in the definition of Oda appeared as a condition on the system, is the one that selects in each class the unique representative.

In $\C^2$ all of this becomes explicit. The tangent space to $\Sigma$ is a complex line, the normal direction to a unit vector $v=(v_1,v_2)$ is $a=(v_2,-v_1)$, and there is no maximum to take in \eqref{eq:curvmax}. Using the relations $S^2_{12}F=-S^1_{11}F$ and $S^2_{22}F=-S^1_{12}F$, which are \eqref{eq:canonica} for $n=2$, formula \eqref{eq:curvatura} becomes
\begin{equation}\label{eq:cubica}
\kappa_\Sigma(z,v)=|c_z(v)|,\qquad c_z(v)=-S^2_{11}F\,v_1^3+3S^1_{11}F\,v_1^2v_2+3S^1_{12}F\,v_1v_2^2+S^1_{22}F\,v_2^3,
\end{equation}
with the coefficients evaluated at $z$. The curvature of the preimages of the lines is the modulus of a binary cubic form, and its four coefficients are, up to numerical factors, the four independent Schwarzians counted by the formula $\tfrac12n(n-1)(n+2)$ for $n=2$. Two cubic forms with the same modulus on all of $\C^2$ differ by a constant factor of modulus one, so the curvatures $\kappa_\Sigma(z,v)$ determine $S_F(z)$ up to that factor. In this case $\Sigma$ is moreover a minimal surface in $\R^4$, and its Gauss curvature is $K=-2\kappa_\Sigma^2$ (Exercise~\ref{ej:gauss}).

\medskip

\noindent\textbf{The Schwarzian and Halley's method.} There is another route to \eqref{eq:SP}, independent of differential equations, which explains in a different way the factor $J_F^{-1/(n+1)}$. In one variable, Newton's method for approximating a zero $a$ of $f$ iterates the function $g(z)=z-f(z)/f'(z)$, which satisfies $g(a)=a$, $g'(a)=0$ and $g''(a)=Pf(a)$. Halley's method is nothing other than Newton's method applied to $f\cdot(f')^{-1/2}$; its iteration function $h$ satisfies $h'(a)=h''(a)=0$ and $h'''(a)=-Sf(a)$. The pre-Schwarzian and the Schwarzian are thus the first nontrivial coefficients of two classical numerical methods. In $\C^n$ Newton's method iterates $z\mapsto z-DF(z)^{-1}F(z)$, and the analogue of Halley's method consists in applying it to $g=J_F^{-1/(n+1)}F$, that is, in iterating $G(z)=z-Dg(z)^{-1}g(z)$.

\begin{thm}[\cite{H22}]\label{thm:halley}
Let $F:\Omega\to\C^n$ be locally biholomorphic in a simply connected domain, with $F(a)=0$. Then
\[
G(a)=a,\qquad DG(a)=0,\qquad D^2G(a)=S_F(a).
\]
\end{thm}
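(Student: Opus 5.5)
Write $\phi=J_F^{-1/(n+1)}=u_0$, which is holomorphic and nonvanishing on the simply connected domain $\Omega$, and set $g=\phi F$. By Theorem~\ref{thm:oda}(a), each component $g_l=u_l$ is a solution of Oda's system, so by \eqref{eq:sistemacompacto}
\[
D^2g(z)(v,w)=Dg(z)\,S_F(z)(v,w)+S^0_F(z)(v,w)\,g(z).
\]
Here $Dg(z)S_F(z)(v,w)$ means the matrix $Dg(z)$ applied to the vector $S_F(z)(v,w)$; in components this reads $\partial_{ij}g_l=\sum_k S^k_{ij}F\,\partial_kg_l+S^0_{ij}F\,g_l$. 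This identity is the whole engine of the proof. At $a$ we have $g(a)=0$ and $Dg(a)=\phi(a)DF(a)$, which is invertible. So $G$ is holomorphic near $a$ and $G(a)=a$.

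For the derivatives, write $G(z)=z-Dg(z)^{-1}g(z)$ and differentiate using \eqref{eq:producto}:
\[
DG(z)v=v-Dg^{-1}Dg\,v+Dg^{-1}D^2g(v,\cdot)\,Dg^{-1}g=Dg(z)^{-1}D^2g(z)\bigl(v,\,Dg(z)^{-1}g(z)\bigr).
\]
Since $g(a)=0$, this gives $DG(a)=0$. It is convenient to write $r(z)=Dg(z)^{-1}g(z)$, so that $DG(z)v=Dg^{-1}D^2g(v,r)$, and $r(a)=0$, $Dr(a)=\Id$.

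Differentiate once more in direction $w$ and evaluate at $a$. Every term in which the factor $r$ survives undifferentiated vanishes, so
\[
D^2G(a)(v,w)=Dg(a)^{-1}D^2g(a)(v,Dr(a)w)=Dg(a)^{-1}D^2g(a)(v,w).
\]
Now substitute the system. Its zero-order term drops because $g(a)=0$, leaving $D^2g(a)(v,w)=Dg(a)S_F(a)(v,w)$. Therefore $D^2G(a)(v,w)=S_F(a)(v,w)$.

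The only delicate point is bookkeeping: one has to track the differentiation of $Dg^{-1}$ and confirm that all extra terms carry the factor $r(a)=0$ or $g(a)=0$. Otherwise the proof is a direct consequence of Theorem~\ref{thm:oda}, namely that the components of $J_F^{-1/(n+1)}F$ solve the system whose first-order coefficients are $S_F$. This is the precise sense in which the factor $J_F^{-1/(n+1)}$ removes the trace part of the pre-Schwarzian. Without it one gets $D^2G(a)=P_F(a)$, as for Newton's method.
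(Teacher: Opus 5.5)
Your proof is correct. Up to the identity $D^2G(a)=Dg(a)^{-1}D^2g(a)=P_g(a)$ it is the paper's argument; the two differ only in the last step.

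The paper computes $D^2g(a)$ directly from $g=J_F^{-1/(n+1)}F$ by the product rule. It uses $F(a)=0$ to drop the term containing second derivatives of $J_F^{-1/(n+1)}$, and then recognizes $S_F(a)$ through \eqref{eq:SP}. This needs only the algebraic formula for $S_F$, not the theory of the linear system.

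You instead cite Theorem~\ref{thm:oda}(a): the components of $g$ solve the system \eqref{eq:sistemacompacto}, and $g(a)=0$ kills the zero-order term. This is the same mechanism as in the proof of Theorem~\ref{thm:curvatura}.

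The two routes are equivalent in substance. The verification in Theorem~\ref{thm:oda}(a) that $f_lJ_F^{-1/(n+1)}$ is a solution, identity \eqref{eq:clave}, is exactly the paper's product-rule computation.

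What your packaging buys is a statement valid at every point, not just at $a$. Wherever $Dg$ is invertible,
$Dg^{-1}D^2g(v,w)=S_F(v,w)+S^0_F(v,w)\,Dg^{-1}g$.
So the pre-Schwarzian of $g$ equals $S_F$ plus the zero-order coefficient times the Newton step $Dg^{-1}g$, and the final step of the theorem is the case where that Newton step vanishes.
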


\begin{proof}
From \eqref{eq:producto}, $DG(z)(v)=Dg(z)^{-1}D^2g(z)\bigl(v,Dg(z)^{-1}g(z)\bigr)$, which vanishes at $a$ because $g(a)=0$. Differentiating once more and evaluating at $a$, the terms that contain $g(a)$ disappear and we are left with $D^2G(a)=Dg(a)^{-1}D^2g(a)=P_g(a)$. On the other hand, at $a$ we have $Dg=J_F^{-1/(n+1)}DF$ and
\[
D^2g(v,w)=J_F^{-\frac{1}{n+1}}\Bigl[D^2F(v,w)-\frac{1}{n+1}\bigl(\bil{\nabla\log J_F}{v}\,DF\,w+\bil{\nabla\log J_F}{w}\,DF\,v\bigr)\Bigr],
\]
whence $P_g(a)=S_F(a)$ by \eqref{eq:SP}.
\end{proof}

For $n=1$ the theorem says $G''(a)=0$, which is the gain in order of Halley's method over Newton's, and the classical Schwarzian appears only in the next derivative. For $n\ge2$ that gain in order occurs only when $S_F(a)=0$. It is the same dichotomy as in Remark~\ref{obs:n1}, seen from another side.

\medskip

\noindent\textbf{Prescribing the pre-Schwarzian.}\label{sec:preschwarziana} Theorem~\ref{thm:oda} also answers a question about $P_F$ that in one variable is trivial. Which bilinear operators $A$ are pre-Schwarzians? For $n=1$ every holomorphic function $A$ is $f''/f'$ for $f=\int\exp\int A$. For $n\ge2$ the equation $DF^{-1}D^2F=A$ is an overdetermined system and there are integrability conditions.

\begin{thm}[\cite{H11}]\label{thm:unicidad}
Let $f,g$ be locally biholomorphic in a domain $\Omega\subset\C^n$. Then $P_f=P_g$ if and only if $f=Ag+b$ with $A$ a constant invertible matrix and $b\in\C^n$.
\end{thm}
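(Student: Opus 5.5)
The plan is to rewrite the condition $P_f=P_g$ as a first-order statement about the matrix function $\Phi=Df\,(Dg)^{-1}$ and show that it forces $\Phi$ to be constant. The easy direction comes first. If $f=Ag+b$, then $Df=A\,Dg$ and $D^2f=A\,D^2g$, so
\[
P_f=(Dg)^{-1}A^{-1}A\,D^2g=P_g .
\]

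For the converse, assume $P_f=P_g$ on $\Omega$ and set $\Phi(z)=Df(z)\,Dg(z)^{-1}$, a holomorphic invertible matrix function. By \eqref{eq:producto}, for every vector $v$,
\[
D\Phi(v)=D^2f(v,\cdot)\,Dg^{-1}-Df\,Dg^{-1}\,D^2g(v,\cdot)\,Dg^{-1},
\]
where $D^2f(v,\cdot)$ denotes the matrix $w\mapsto D^2f(v,w)$. Factoring out $Df$ on the left and $Dg^{-1}$ on the right, this becomes
\[
D\Phi(v)=Df\bigl[P_f(v,\cdot)-P_g(v,\cdot)\bigr]Dg^{-1}.
\]
The bracket vanishes by hypothesis, so $D\Phi\equiv0$. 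Since $\Omega$ is connected, $\Phi\equiv A$ for a constant invertible matrix $A$.

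It then follows that $Df=A\,Dg$, that is, $D(f-Ag)=0$. Using connectedness again, $f-Ag\equiv b$ is constant, which is the claim.

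I do not expect a real obstacle here. The only care needed is the bookkeeping of the bilinear forms $D^2f(v,\cdot)$ as matrices acting on the second slot, so that the factorization $Df\,P_f(v,\cdot)=D^2f(v,\cdot)$ is correct. Only connectedness of $\Omega$ is used, not simple connectivity.
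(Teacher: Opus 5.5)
Your proof is correct, and it follows a different and more elementary route than the paper. The paper treats $n=1$ separately by integrating twice. For $n\ge2$ it takes traces in $P_f=P_g$ to get $\nabla\log J_f=\nabla\log J_g$. From this it deduces $S_f=S_g$ by \eqref{eq:SP}, and equality of the order-zero coefficients by Remark~\ref{obs:n1}, so $f$ and $g$ have the same Oda system. Theorem~\ref{thm:oda} then gives $f=T\circ g$ with $T$ Möbius, and the constancy of $J_f/J_g$ forces $J_T$ to be constant, so $T$ is affine by \eqref{eq:jacmobius}.

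Your identity $D\Phi(v)=Df\,[P_f(v,\cdot)-P_g(v,\cdot)]\,Dg^{-1}$ for $\Phi=Df\,Dg^{-1}$ bypasses all of this. It handles every $n$ at once and needs no branch of $J^{-1/(n+1)}$. As you note, it uses only connectedness. Theorem~\ref{thm:oda} is stated for simply connected domains, while the statement only asks for a domain.

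The paper's route buys conceptual rather than logical content. It exhibits $P_F$ as the Schwarzian $S_F$ plus the trace data $\nabla\log J_F$, and shows that matching the trace removes exactly the non-affine part of the Möbius ambiguity. This makes it a warm-up for Theorem~\ref{thm:prescribir}, where the same decomposition is used to reconstruct a map from a prescribed pre-Schwarzian.
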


\begin{proof}
For $n=1$ it suffices to integrate twice. Let $n\ge2$. If $P_f=P_g$, taking traces and using \eqref{eq:jacobi} we obtain $\nabla\log J_f=\nabla\log J_g$, and then $S_f=S_g$ by \eqref{eq:SP}. The coefficients of order zero also coincide, by Remark~\ref{obs:n1}, and the systems associated with $f$ and with $g$ are the same. By Theorem~\ref{thm:oda}, $f=T\circ g$ with $T$ a Möbius transformation. Since $J_f=(J_T\circ g)\,J_g$ and $J_f/J_g$ is constant, $J_T$ is constant, and by \eqref{eq:jacmobius} $T$ is affine. The converse is immediate.
\end{proof}

\begin{thm}[\cite{H11}]\label{thm:prescribir}
Let $A(z)=(a^k_{ij}(z))$ be a holomorphic bilinear operator in a simply connected domain $\Omega$. There exists a locally biholomorphic $f:\Omega\to\C^n$ with $P_f=A$ if and only if
\begin{enumerate}
\item[(i)] $a^k_{ij}=a^k_{ji}$ for all $i,j,k$;
\item[(ii)] there exists $\phi$ holomorphic in $\Omega$ with $\displaystyle\sum_{k=1}^na^k_{kj}=\pd{\phi}{z_j}$, $j=1,\dots,n$;
\item[(iii)] the system \eqref{eq:sistema} with coefficients $P^k_{ij}=a^k_{ij}-\frac{1}{n+1}\bigl(\delta^k_i\,\partial_j\phi+\delta^k_j\,\partial_i\phi\bigr)$ is completely integrable, and $\exp(-\phi/(n+1))$ is one of its solutions.
\end{enumerate}
\end{thm}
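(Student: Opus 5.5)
Necessity is a direct check against the results already proved; sufficiency comes from part (b) of Theorem~\ref{thm:oda} followed by a normalization that removes the Möbius ambiguity.

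\emph{Necessity.} Suppose $P_f=A$. Condition (i) holds because $D^2f$ is symmetric. For (ii), take traces and use Jacobi's formula \eqref{eq:jacobi}: $\sum_k a^k_{kj}=\partial_j\log J_f$. Since $\Omega$ is simply connected and $J_f\ne0$, a holomorphic branch $\phi=\log J_f$ exists. For (iii), by \eqref{eq:SP} the coefficients $P^k_{ij}$ defined there are exactly $S^k_{ij}f$. Theorem~\ref{thm:oda}(a) then says the system is completely integrable, and that $J_f^{-1/(n+1)}=\exp(-\phi/(n+1))$ is a solution. This requires fixing the branch of the root as $\exp(-\phi/(n+1))$, which is possible because the system is linear and $\phi$ is only determined up to an additive constant.

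\emph{Sufficiency.} First observe that $\sum_jP^j_{ij}=\partial_i\phi-\frac{1}{n+1}(\partial_i\phi+n\,\partial_i\phi)=0$, so the system in (iii) is in canonical form. By complete integrability it has a basis $u_0,\dots,u_n$. We choose $u_0=\exp(-\phi/(n+1))$, which is nonvanishing on all of $\Omega$. Theorem~\ref{thm:oda}(b) then gives that $F=(u_1/u_0,\dots,u_n/u_0)$ is locally biholomorphic on all of $\Omega$ and that $S^k_{ij}F=P^k_{ij}$.

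It remains to pass from $S_F$ back to $P_F$. By \eqref{eq:SP}, $P_F$ is obtained from $S_F$ by adding the trace term built from $\nabla\log J_F$. So it suffices to show $\nabla\log J_F=\nabla\phi$. The proof of Theorem~\ref{thm:oda}(b) gives $u_0=c\,J_F^{-1/(n+1)}$ with $c$ a nonzero constant. Since $u_0=\exp(-\phi/(n+1))$, this means $J_F=c^{\,n+1}e^{\phi}$, hence $\nabla\log J_F=\nabla\phi$.

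Substituting into \eqref{eq:SP}, and using (i) to read $a^k_{ij}$ as the components of a symmetric bilinear operator, we get
\[
P^k_F{}_{ij}=S^k_{ij}F+\frac{1}{n+1}\bigl(\delta^k_i\partial_j\phi+\delta^k_j\partial_i\phi\bigr)=a^k_{ij}.
\]
Thus $P_F=A$, and we take $f=F$.

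The only delicate point is the choice of $u_0$. For an arbitrary basis, Theorem~\ref{thm:oda}(b) controls $J_F$ only up to the choice of the denominator, that is, up to a Möbius change $T\circ F$. Under such a change $P_F$ shifts by a trace term. The hypothesis in (iii) that $\exp(-\phi/(n+1))$ is a solution is exactly what allows choosing the denominator so that $J_F$ is proportional to $e^{\phi}$. It is also what guarantees $u_0\ne0$ everywhere, so that $F$ is defined on all of $\Omega$ and not only off the zero set of some solution.
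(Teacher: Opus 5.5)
Your proof is correct and follows the route the paper sketches. Necessity comes from the symmetry of $D^2f$, Jacobi's formula \eqref{eq:jacobi} and Theorem~\ref{thm:oda}(a). Sufficiency comes from Theorem~\ref{thm:oda}(b) with the denominator chosen as $\exp(-\phi/(n+1))$; this forces $J_F$ to be a constant multiple of $e^{\phi}$, and then $P_F=A$ follows from \eqref{eq:SP}.

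The only difference is cosmetic. The paper selects the right Möbius representative through Corollary~\ref{cor:soluciones}, while you get it directly from the constancy of $W=u_0^{n+1}J_F$ in the proof of Theorem~\ref{thm:oda}(b). Also, when you check the canonical form, you are tacitly using (i) to identify $\sum_j a^j_{ij}$ with the trace appearing in (ii).
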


The three conditions can be read off directly. The first is the symmetry of $D^2f$. The second says that the trace of $A$ must be a gradient, that of $\phi=\log J_f$, and those are the $n$ parameters that the Schwarzian does not see. The third reconstructs $f$. The $P^k_{ij}$ are the candidates for $S^k_{ij}f$, Theorem~\ref{thm:oda} produces a mapping with that Schwarzian, unique up to a Möbius transformation, and the requirement that $\exp(-\phi/(n+1))$ be a solution selects, by Corollary~\ref{cor:soluciones}, the representative whose Jacobian is $e^\phi$.

One application of this theorem is the extension to $\C^n$ of the integral transform $f_\alpha(z)=\int_0^z(f')^\alpha$, which in one variable is characterized by $Pf_\alpha=\alpha\,Pf$. In \cite{H11} one defines $f_\alpha$ by the condition $P_{f_\alpha}=\alpha P_f$, when $\alpha P_f$ is a pre-Schwarzian, and it is proved that if the linearly invariant family generated by $f$ in $\B$ has norm order $b$, then $f_\alpha$ is univalent for $|\alpha|\le1/(2b+1)$. This is a consequence of Theorem~\ref{thm:pfaltzgraff}.

\section{The Bergman norm and the family $\Fa$}\label{sec:familia}

We turn to question (B). In one variable the norm \eqref{eq:normaclasica} is obtained by weighting $|Sf|$ with the hyperbolic metric, and it is invariant because the automorphisms of the disk are at once hyperbolic isometries and Möbius transformations. In $\B$ the place of the hyperbolic metric is taken by the Bergman metric. Since $S_F(z)(v,v)$ is again a tangent vector at $z$, it makes sense to measure the argument and the result with the same metric.

\begin{defn}\label{def:norma}
Let $\Omega$ be the ball or the polydisk, with its Bergman metric, and let $F:\Omega\to\C^n$ be locally biholomorphic. We define
\[
\|S_F(z)\|=\sup_{\|v\|_z=1}\|S_F(z)(v,v)\|_z,\qquad\|S_F\|=\sup_{z\in\Omega}\|S_F(z)\|.
\]
\end{defn}

\begin{thm}[\cite{H06}]\label{thm:invariancia}
If $F:\B\to\C^n$ is locally biholomorphic and $\sigma\in\Aut(\B)$, then $\|S_{F\circ\sigma}(z)\|=\|S_F(\sigma(z))\|$ for all $z\in\B$. In particular $\|S_{F\circ\sigma}\|=\|S_F\|$.
\end{thm}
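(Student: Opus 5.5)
The plan is to combine the transformation rule \eqref{eq:precomp} with the fact that automorphisms of $\B$ are isometries of the Bergman metric. Since every $\sigma\in\Aut(\B)$ has the form \eqref{eq:autbola}, it is a Möbius transformation in the sense of \eqref{eq:mobius}. Its linear fractional form extends it to a Möbius map on a neighbourhood of $\overline{\B}$, so \eqref{eq:precomp} applies. It gives, for $z\in\B$ and $v\in\C^n$,
\[
S_{F\circ\sigma}(z)(v,v)=D\sigma(z)^{-1}S_F(\sigma(z))\bigl(D\sigma(z)v,D\sigma(z)v\bigr).
\]
One should check that \eqref{eq:precomp} is really the case $G=F$, $F=\sigma$ of the chain rule \eqref{eq:cadenaop} with $S_\sigma=0$. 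This holds by Corollary~\ref{cor:mobius}, since $\sigma$ is Möbius.

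Next I would use the isometry property: $\|D\sigma(z)v\|_{\sigma(z)}=\|v\|_z$ for all $v$. Because $D\sigma(z)$ is invertible, this also yields $\|D\sigma(z)^{-1}w\|_z=\|w\|_{\sigma(z)}$ for every $w$. Setting $w=D\sigma(z)v$, which runs over all of $\C^n$ as $v$ does, the condition $\|v\|_z=1$ is equivalent to $\|w\|_{\sigma(z)}=1$. We then get
\[
\|S_{F\circ\sigma}(z)(v,v)\|_z=\|D\sigma(z)^{-1}S_F(\sigma(z))(w,w)\|_z=\|S_F(\sigma(z))(w,w)\|_{\sigma(z)}.
\]
Taking the supremum over $\|v\|_z=1$, that is, over $\|w\|_{\sigma(z)}=1$, gives $\|S_{F\circ\sigma}(z)\|=\|S_F(\sigma(z))\|$. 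Since $\sigma$ is a bijection of $\B$, taking the supremum over $z$ gives the equality of global norms.

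There is no real obstacle here; the only point needing care is the justification of the two ingredients. First, \eqref{eq:precomp} requires that $\sigma$ be Möbius, and this is exactly where the ball differs from the polydisk. Second, the isometry property of the Bergman metric under $\Aut(\B)$ is cited in Section~\ref{sec:prelim}. If one wanted it directly, it follows from the transformation law of the Bergman kernel, $K(\sigma z,\sigma z)|J_\sigma(z)|^2=K(z,z)$, applied to $g_{ij}=\partial_i\overline\partial_j\log K$. Since that fact is stated earlier, I would simply invoke it.
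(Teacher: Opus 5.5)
Your proposal is correct and follows essentially the same route as the paper's proof: \eqref{eq:precomp} (justified because $\sigma$ is Möbius, so $S_\sigma=0$ in \eqref{eq:cadenaop}), the Bergman isometry property of $\sigma$, the substitution $w=D\sigma(z)v$, and a supremum over unit vectors. Your extra justifications, namely the bijectivity of $v\mapsto w$ and the Bergman-kernel transformation law behind the isometry, are accurate additions but not needed beyond what the paper already cites.
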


\begin{proof}
Since $\sigma$ is a Möbius transformation, \eqref{eq:precomp} holds. Let $w=D\sigma(z)v$. Since $\sigma$ is an isometry, $\|w\|_{\sigma(z)}=\|v\|_z$ and
\[
\|S_{F\circ\sigma}(z)(v,v)\|_z=\|D\sigma(z)^{-1}S_F(\sigma(z))(w,w)\|_z=\|S_F(\sigma(z))(w,w)\|_{\sigma(z)}.
\]
It suffices to take the supremum over $\|v\|_z=1$.
\end{proof}

The proof uses the same two properties of the automorphisms as in the disk. That they are isometries is true in any bounded homogeneous domain with its Bergman metric; that they are Möbius transformations is special to the ball. In the polydisk this second property fails, and with it the invariance (Section~\ref{sec:polidisco}).

\begin{obs}\label{obs:curvaturabergman}
The Bergman norm controls the curvatures of Theorem~\ref{thm:curvatura}. By \eqref{eq:bergmanbola},
\[
\frac{(n+1)\,|v|^2}{1-|z|^2}\ \le\ \|v\|_z^2\ \le\ \frac{(n+1)\,|v|^2}{(1-|z|^2)^2},
\]
and applying the first inequality to $S_F(z)(v,v)$ and the second to $v$ we obtain, for $|v|=1$,
\[
|S_F(z)(v,v)|\le\frac{\sqrt{1-|z|^2}}{\sqrt{n+1}}\,\|S_F(z)\|\,\|v\|_z^2\le\frac{\sqrt{n+1}\,\|S_F(z)\|}{(1-|z|^2)^{3/2}} .
\]
By \eqref{eq:curvaturacota}, if $\|S_F\|\le\alpha$ in $\B$ then every preimage $\Sigma$ of a hyperplane satisfies
\begin{equation}\label{eq:curvaturabergman}
\kappa_\Sigma(z,v)\le\frac{\sqrt{n+1}\,\alpha}{(1-|z|^2)^{3/2}},\qquad z\in\Sigma .
\end{equation}
At the origin the two inequalities used are equalities.
\end{obs}

Fix $\alpha\ge0$ and consider the family
\begin{equation}\label{eq:Fa}
\Fa=\bigl\{F\in\mathcal{LS}(\B):\ \|S_F\|\le\alpha\bigr\}.
\end{equation}
The Koebe transforms \eqref{eq:koebe} are obtained by composing with an automorphism on the right and with an affine mapping on the left, so that Theorem~\ref{thm:invariancia} and Corollary~\ref{cor:mobius} show that $\Fa$ is linearly invariant. For $\alpha=0$ it consists of the Möbius transformations $T_a$ of \eqref{eq:Ta} that are holomorphic in the ball, which occurs if and only if $|a|\le1$. The program, from here on, is that of Pommerenke, which consists in showing that $\Fa$ is compact and estimating its order.

\medskip

\noindent\textbf{Pointwise bounds and normality.} The hypothesis $\|S_F\|\le\alpha$ is a single inequality on an object with many components, and the first step is to take it apart.

\begin{lem}[\cite{H06}]\label{lem:cotas}
Let $F:\B\to\C^n$ with $\|S_F\|\le\alpha$, let $z=(z_1,0,\dots,0)\in\B$ and $\tau=1-|z_1|^2$. For indices $i,j,k\ge2$, with $i\ne j$ wherever both appear,
\[
\begin{array}{ll}
|S^1_{11}F(z)|\le\sqrt{n+1}\,\alpha\,\tau^{-1}, & |S^k_{11}F(z)|\le\sqrt{n+1}\,\alpha\,\tau^{-3/2},\\[4pt]
|S^1_{1j}F(z)|\le2\sqrt{n+1}\,\alpha\,\tau^{-1/2}, & |S^k_{1j}F(z)|\le2\sqrt{n+1}\,\alpha\,\tau^{-1},\\[4pt]
|S^1_{ii}F(z)|\le\sqrt{n+1}\,\alpha, & |S^k_{ii}F(z)|\le\sqrt{n+1}\,\alpha\,\tau^{-1/2},\\[4pt]
|S^1_{ij}F(z)|\le2\sqrt{n+1}\,\alpha, & |S^k_{ij}F(z)|\le2\sqrt{n+1}\,\alpha\,\tau^{-1/2}.
\end{array}
\]
\end{lem}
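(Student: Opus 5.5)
The plan is to unpack the single inequality $\|S_F(z)(v,v)\|_z\le\alpha\|v\|_z^2$ at the special point $z=(z_1,0,\dots,0)$, where the Bergman metric is diagonal. By the formula after \eqref{eq:bergmanbola}, for a vector $w$,
\[
\|w\|_z^2=(n+1)\Bigl[\tau^{-2}|w_1|^2+\tau^{-1}\textstyle\sum_{k\ge2}|w_k|^2\Bigr].
\]
Consequently $|w_1|\le\tau\,\|w\|_z/\sqrt{n+1}$ and $|w_k|\le\tau^{1/2}\|w\|_z/\sqrt{n+1}$ for $k\ge2$. We apply this to $w=S_F(z)(v,v)$, whose $k$-th component is $v^tS^kF(z)v$. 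This gives
\[
|v^tS^1F(z)v|\le\frac{\tau\,\alpha\,\|v\|_z^2}{\sqrt{n+1}},\qquad |v^tS^kF(z)v|\le\frac{\tau^{1/2}\alpha\,\|v\|_z^2}{\sqrt{n+1}}\quad(k\ge2).
\]

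For the diagonal coefficients we take $v=e_1$, for which $\|e_1\|_z^2=(n+1)\tau^{-2}$. This yields $|S^1_{11}|\le\sqrt{n+1}\,\alpha\,\tau^{-1}$ and $|S^k_{11}|\le\sqrt{n+1}\,\alpha\,\tau^{-3/2}$. Next we take $v=e_i$ with $i\ge2$, for which $\|e_i\|_z^2=(n+1)\tau^{-1}$. This yields $|S^1_{ii}|\le\sqrt{n+1}\,\alpha$ and $|S^k_{ii}|\le\sqrt{n+1}\,\alpha\,\tau^{-1/2}$. These match the table exactly.

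For the off-diagonal entries we use polarization with weighted vectors. We take $v=ae_1+be_j$ and expand
\[
v^tS^kv=a^2S^k_{11}+2abS^k_{1j}+b^2S^k_{jj}.
\]
Combining $v_\pm=ae_1\pm be_j$ gives $4ab\,S^k_{1j}=v_+^tS^kv_+-v_-^tS^kv_-$, and each term on the right is bounded by the estimate above. We have $\|v_\pm\|_z^2=(n+1)(|a|^2\tau^{-2}+|b|^2\tau^{-1})$. To balance the two terms we choose $a=\tau$ and $b=\tau^{1/2}$, so that $\|v_\pm\|_z^2=2(n+1)$. Then
\[
4\tau^{3/2}|S^k_{1j}|\le 2\cdot 2\sqrt{n+1}\,\alpha\cdot(\tau\text{ or }\tau^{1/2}),
\]
which gives $|S^1_{1j}|\le\sqrt{n+1}\,\alpha\,\tau^{-1/2}$ and $|S^k_{1j}|\le\sqrt{n+1}\,\alpha\,\tau^{-1}$. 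These are even better than the stated bounds, which carry a factor $2$. If the constant turns out to need adjusting, the stated factor $2$ leaves room. For instance, one can instead use the cruder route of subtracting the already bounded diagonal terms from the single quantity $v^tS^kv$ with $v=ae_1+be_j$.

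The case $i,j\ge2$ with $i\ne j$ is handled the same way, with $v_\pm=e_i\pm e_j$ and $\|v_\pm\|_z^2=2(n+1)\tau^{-1}$. This gives $|S^k_{ij}|\le\sqrt{n+1}\,\alpha\,\tau^{1/2}\cdot\tau^{-1}$ for $k\ge2$, and the corresponding $\tau^0$ bound for $k=1$, both within the stated constants.

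The only real care is bookkeeping of the powers of $\tau$. The weights must be chosen so that both components of the test vector have comparable Bergman length, since that choice is what produces the sharp exponents. The main obstacle, if any, is confirming that the polarization constants do not exceed the factor $2$ in the table. No earlier deep result is needed beyond the explicit form of the metric at $(z_1,0,\dots,0)$ and Definition~\ref{def:norma}.
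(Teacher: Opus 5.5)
Your proof is correct and follows the paper's own route: you evaluate $S_F(z)$ on basis vectors normalized for the diagonal Bergman metric at $(z_1,0,\dots,0)$, and you polarize for the mixed terms. Your symmetric polarization $v_+-v_-$ with weights $a=\tau$, $b=\tau^{1/2}$ actually gives the mixed bounds with constant $1$ instead of $2$, which is stronger than the statement; the factor $2$ in the table is what the one-vector subtraction of the diagonal terms produces.
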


The bounds are obtained by evaluating $S_F(z)$ on suitably normalized basis vectors, and by polarization for the mixed terms. The rule that summarizes them is simple. Each radial lower index contributes $\tau^{-1}$, each tangential lower index contributes $\tau^{-1/2}$, and the upper index gives back $\tau$ or $\tau^{1/2}$ according to whether it is radial or tangential. This is the anisotropy of the Bergman metric. By invariance under unitary rotations, the lemma describes $S_F$ at any point.

Combining Lemma~\ref{lem:cotas} with \eqref{eq:S0ii}--\eqref{eq:S0ij} and with Cauchy estimates for the derivatives, one also controls the zero order part, and obtains
\begin{equation}\label{eq:cotasS0}
|S^0_{11}F(z_1,0,\dots,0)|\le\frac{C(n,\alpha)}{(1-|z_1|^2)^2},\qquad|S^0_{1j}F(z_1,0,\dots,0)|\le\frac{K(n,\alpha)}{(1-|z_1|^2)^{3/2}}\quad(j\ge2),
\end{equation}
with explicit constants that tend to zero with $\alpha$; for example,
\begin{equation}\label{eq:Cnalpha}
C(n,\alpha)=\Bigl(4n^2+2n-2+\frac{n+1}{n-1}\Bigr)\alpha^2+\Bigl(4\sqrt{n+1}+\frac{8\sqrt{n+1}}{n-1}\Bigr)\alpha\ \le\ 6n^2\alpha^2+16\sqrt n\,\alpha .
\end{equation}
Note the weight $(1-|z_1|^2)^{-2}$ in the first bound, which is the same as that of the classical Schwarzian, as befits a coefficient of order zero. With \eqref{eq:cotasS0}, the restriction of a solution of \eqref{eq:sistema} to a diameter satisfies a second order ordinary differential inequality, and a Sturm comparison shows that the solution normalized by $u(0)=1$, $\nabla u(0)=0$ does not vanish in a ball of radius $r(n,\alpha)>0$. Compactness follows from this.

\begin{thm}[\cite{H06}]\label{thm:normal}
For each $\alpha<\infty$ the family $\Fa$ is normal and compact in the topology of uniform convergence on compact sets.
\end{thm}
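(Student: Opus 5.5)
The plan is to prove a uniform bound on the order of $\Fa$ via the Jacobian, convert it into local boundedness through the norm order, and then check that $\Fa$ is closed under locally uniform limits. All the analytic input is concentrated in the claim just before the statement: the solution of \eqref{eq:sistemacompacto} normalized by $u(0)=1$, $\nabla u(0)=0$ does not vanish in $|z|<r(n,\alpha)$.

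First, take $F\in\Fa$ and $a\in\C^n$ with $|a|<r=r(n,\alpha)$. I would look at the solution $u=J_{T_a\circ F}^{-1/(n+1)}$ given by Corollary~\ref{cor:soluciones}; it equals $(1+\bil{a}{F})J_F^{-1/(n+1)}$. Its gradient at the origin is $a-\frac{1}{n+1}\nabla\log J_F(0)$. Choosing $a=\frac{1}{n+1}\nabla\log J_F(0)$ produces the normalized solution, which is nonvanishing on $|z|<r$. If $|a|$ were large, however, $1+\bil{a}{F}$ would vanish near $0$, since $F(z)=z+O(|z|^2)$. Making this quantitative needs a bound on $F$ near $0$ that does not depend on $F$. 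To avoid circularity I would instead use the one-variable Sturm comparison directly along the diameter through $\overline{a}$. The restriction $\varphi(t)=u(t\xi)$, $|\xi|=1$, satisfies $\varphi''=\bil{\nabla u}{S_F(\xi,\xi)}+S^0_F(\xi,\xi)\varphi$. Lemma~\ref{lem:cotas} and \eqref{eq:cotasS0} control the coefficients, so $\varphi$, with $\varphi(0)=1$ and $\varphi'(0)=\bil{\nabla u(0)}{\xi}$, cannot vanish before a time of order $1/(1+|\varphi'(0)|)$. Comparing with $1+\bil{a}{F}$, which does vanish at a point of order $1/|a|$, gives a uniform bound $|\nabla\log J_F(0)|\le b(n,\alpha)$, that is, $\ord\Fa<\infty$ by \eqref{eq:orden}.

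Next, the order bound gives, by the Liu--Pfaltzgraff estimate, uniform two-sided bounds on $|J_F|$ on compact subsets. To bound $F$ itself I need the norm order, and here \eqref{eq:SP} is the key tool. At the origin $P_F(0)=D^2F(0)=S_F(0)+\frac{1}{n+1}[\bil{\nabla\log J_F(0)}{v}w+\bil{\nabla\log J_F(0)}{w}v]$. Since $\|S_F(0)\|\le\alpha$ and the Bergman metric at $0$ is a multiple of the Euclidean one, $|S_F(0)(v,v)|\le\alpha|v|^2/\sqrt{n+1}$. Hence $\|A_2(F)\|\le\frac12(\alpha/\sqrt{n+1}+2b/(n+1))$ uniformly on $\Fa$. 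By linear invariance this holds for every Koebe transform, so $\|\ord\|\Fa<\infty$. The result of Pfaltzgraff and Suffridge \cite{PS} then makes $\Fa$ locally uniformly bounded, hence normal by Montel's theorem.

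Finally, for compactness, let $F_m\to F$ locally uniformly with $F_m\in\Fa$. The Jacobian lower bound passes to the limit, so $J_F\ne0$ and $F\in\mathcal{LS}(\B)$. Moreover $S_{F_m}\to S_F$ locally uniformly, because they are built from derivatives of order at most two and $\log J_{F_m}$. Therefore $\|S_F(z)\|\le\alpha$ at every point, and $F\in\Fa$.

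The main obstacle is the first step. It requires an honest Sturm comparison with a first-order term $\bil{\nabla u}{S_F(\xi,\xi)}$. That term involves the tangential components of $\nabla u$, which are not controlled by $\varphi$ alone, so I would have to bound the whole vector $(u,\nabla u)$ along the diameter by a Gronwall argument on the first-order system.
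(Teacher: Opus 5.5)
Your overall architecture is sound: bound the order, pass to the norm order through \eqref{eq:SP}, apply Pfaltzgraff--Suffridge and Montel, then check that $\Fa$ is closed. It is compatible with the paper's sketch, and Steps 2 and 3 are fine up to a harmless slip. Since $\|v\|_0=\sqrt{n+1}\,|v|$ and $S_F$ takes two vectors and returns one, the correct bound is $|S_F(0)(v,v)|\le\sqrt{n+1}\,\alpha\,|v|^2$, not $\alpha|v|^2/\sqrt{n+1}$.

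The gap is in Step 1, which carries the whole content of the theorem. You want a contradiction between two facts: the normalized solution $u=(1+\bil{a}{F})J_F^{-1/(n+1)}$ does not vanish on $|z|<r$, and $1+\bil{a}{F}$ vanishes at distance of order $1/|a|$. Both statements concern the same zero set, and the second is the wrong heuristic, not merely an unproved one. With $G=T_a\circ F$ the normalized map, one has $1+\bil{a}{F}=1/(1-\bil{a}{G})$. Its zeros are exactly the poles of $G$, which is exactly what the nonvanishing of $u$ rules out. The expansion $F(z)=z+O(|z|^2)$ is useless at scale $1/|a|$. There $D^2F(0)(v,v)=S_F(0)(v,v)+2\bil{a}{v}\,v$ makes the quadratic term as large as the linear one. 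Moreover, the one-sided comparison you describe can only show that a solution does not vanish early. It cannot produce a zero, and producing a zero is what your argument needs.

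The missing idea is to place the contradiction on the other solution. What is forced to vanish when $|a|$ is large is $J_F^{-1/(n+1)}=u\,(1-\bil{a}{G})$. That is, $F=T_{-a}\circ G$ would have a pole near $\overline a/|a|^2$, which is impossible because $F$ is holomorphic in $\B$.

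To make this quantitative you need two-sided uniform control of $G$ near $0$, not of $F$. Write $G=(v_1/v_0,\dots,v_n/v_0)$, where $v_0=u$ and $v_i$ is the solution with $v_i(0)=0$, $\nabla v_i(0)=e_i$. These initial data do not depend on $F$.

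The Gronwall argument you mention at the end supplies this control, and the tangential components are not an obstacle. Along the ray $t\xi$, the first-order system for $(u,\nabla u)$ involves only $\Hess u(\xi,\cdot)$. Hence it uses only coefficients with one lower index in the direction $\xi$, which are precisely those bounded by Lemma~\ref{lem:cotas} and \eqref{eq:cotasS0}. This gives $r'>0$ and $C$, depending only on $n$ and $\alpha$, with $|v_0-1|\le\frac12$ and $|G(z)-z|\le C|z|^2$ for $|z|<r'$. Gronwall also yields the nonvanishing of $v_0$ there, so no Sturm comparison is needed.

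Now apply Rouch\'e's theorem on the complex line $\zeta\mapsto\zeta\,\overline a/|a|$. It shows that $1-\bil{a}{G}$ vanishes in $|z|<r'$ once $|a|$ exceeds a constant $c(n,\alpha)$. Hence $|\nabla J_F(0)|=(n+1)|a|\le(n+1)c(n,\alpha)$, and the rest of your proof goes through.
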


This theorem marks a contrast with the plane that is worth emphasizing. In the disk, every univalent function has $\|Sf\|\le6$, so that the entire class $S$ lives inside a family of bounded Schwarzian. In $\C^n$ this is impossible, because the normalized univalent mappings do not form a normal family. A bound for $\|S_F\|$ is therefore a genuine restriction, and there is no Kraus--Nehari theorem.

\medskip

\noindent\textbf{The norm order.} The next step is to estimate the second order coefficients.

\begin{thm}[\cite{H06}; \cite{CH13}, (2.10)]\label{thm:ordennorma}
\[
\|\ord\|\,\Fa\le\frac{\sqrt{n+1}}{2}\,\alpha+\frac{2}{n+1}\,\ord\Fa .
\]
\end{thm}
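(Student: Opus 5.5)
Since both orders are suprema over $F\in\Fa$, we fix $F\in\Fa$ and a unit vector $v$ and bound $|A_2(F)(v,v)|=\tfrac12|D^2F(0)(v,v)|$. The starting point is \eqref{eq:SP} at $z=0$, where $DF(0)=\Id$ gives $P_F(0)=D^2F(0)$. Hence
\[
\tfrac12D^2F(0)(v,v)=\tfrac12S_F(0)(v,v)+\frac{1}{n+1}\bil{\nabla\log J_F(0)}{v}\,v .
\]
The triangle inequality splits the quantity into a Schwarzian part and a trace part, and we estimate each separately.

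\textbf{The Schwarzian part.} At the origin the Bergman metric is $\|w\|_0^2=(n+1)|w|^2$. For $|v|=1$ we have $\|v\|_0=\sqrt{n+1}$, so the homogeneity of $S_F(0)$ gives
\[
\|S_F(0)(v,v)\|_0\le\|S_F(0)\|\,\|v\|_0^2\le\alpha(n+1).
\]
Converting back to Euclidean norm yields $|S_F(0)(v,v)|\le\alpha(n+1)/\sqrt{n+1}=\sqrt{n+1}\,\alpha$. This produces the first summand $\tfrac{\sqrt{n+1}}{2}\alpha$. It is the same computation as in Remark~\ref{obs:curvaturabergman}, where equality at the origin was noted.

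\textbf{The trace part.} By Jacobi's formula \eqref{eq:jacobi} and $J_F(0)=1$, we have $\nabla\log J_F(0)=\nabla J_F(0)$. Therefore
\[
\frac{1}{n+1}\bigl|\bil{\nabla\log J_F(0)}{v}\bigr|\le\frac{1}{n+1}|\nabla J_F(0)|\le\frac{2}{n+1}\ord\Fa,
\]
using the second expression in \eqref{eq:orden}, which says $\tfrac12|\nabla J_F(0)|\le\ord\Fa$. Adding the two parts and taking the supremum over $v$ and $F$ gives the stated inequality.

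The only point needing care is the convention in the order \eqref{eq:orden}. The supremum over $|v|=1$ of $|\tr(\tfrac12D^2F(0)(v,\cdot))|$ equals $\tfrac12|\nabla J_F(0)|$ because the bilinear pairing $\bil{a}{v}$ has supremum $|a|$ over unit $v$. With that identity the rest is bookkeeping. If $\ord\Fa$ is infinite the statement is vacuous. No invariance argument is needed, because everything is evaluated at the origin.
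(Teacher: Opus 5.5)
Your proof is correct and follows the paper's argument essentially step for step. You evaluate \eqref{eq:SP} at the origin using $DF(0)=\Id$ and $J_F(0)=1$, bound the Schwarzian term by $\sqrt{n+1}\,\alpha$ because the Bergman metric at $0$ is $(n+1)$ times the Euclidean one, and bound the trace term using $|\nabla J_F(0)|\le 2\,\ord\Fa$ from \eqref{eq:orden}.
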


\begin{proof}
Let $F\in\Fa$ and $|v|=1$. At the origin $J_F(0)=1$ and $P_F(0)=D^2F(0)=2A_2$, so that \eqref{eq:SP} reads
\[
2A_2(v,v)=S_F(0)(v,v)+\frac{2}{n+1}\,\bil{\nabla J_F(0)}{v}\,v .
\]
The Bergman metric at the origin is $(n+1)$ times the Euclidean metric. Therefore $v/\sqrt{n+1}$ is a unit vector for it, and the hypothesis $\|S_F(0)\|\le\alpha$ says that $|S_F(0)(v,v)|\le\sqrt{n+1}\,\alpha$. Consequently
\[
|A_2(v,v)|\le\frac{\sqrt{n+1}}{2}\,\alpha+\frac{1}{n+1}\,|\nabla J_F(0)|,
\]
and $|\nabla J_F(0)|\le2\,\ord\Fa$ by \eqref{eq:orden}.
\end{proof}

The theorem reduces the norm order to the order, and we have stated it in the form in which it appears in \cite{CH13}. In \cite{H06} there is in addition an example in dimension two which, supported by a numerical verification of the norm, gives $\|\ord\|\,\Fa\ge1+\tfrac{\sqrt3}{2}\alpha$; the linear growth in $\alpha$ would then be the correct one.

The identity in the proof can also be read in the opposite direction, and it then bounds the Schwarzian in terms of the two orders,
\[
\sqrt{n+1}\,\|S_F(0)\|\le2\,\|A_2(F)\|+\frac{2}{n+1}\,|\nabla J_F(0)| .
\]
Applied to the family of normalized convex mappings of $\mathbb{B}^2$, which is linearly invariant, has norm order $1$ \cite[Theorem~10.5.1]{GK} and has order at most $1.761$ by a theorem of Barnard, FitzGerald and Gong \cite{BFG}, it gives for every convex mapping $F$ of $\mathbb{B}^2$
\[
\|S_F\|\le\frac{2}{\sqrt3}+\frac{4}{3\sqrt3}\,(1.761)<2.52 .
\]
This is Corollary~3.12 of \cite{H06}, with the constant that results from the preceding theorem. We shall return to convex mappings in Section~\ref{sec:convexas}.

\medskip

\noindent\textbf{The order.} It remains to estimate $\ord\Fa$, and here one can follow closely the variational argument with which Pommerenke computed the order in one variable. Let
\[
A_\alpha=\sup_{F\in\Fa}|\nabla J_F(0)|=2\,\ord\Fa .
\]
The idea is simple. By compactness there is an extremal mapping $F_0$. Its Koebe transforms remain in $\Fa$, and therefore cannot have a larger value of $|\nabla J(0)|$. Writing this condition to first order in the parameter of the transform produces an equation that $F_0$ must satisfy at the origin. We need two lemmas.

\begin{lem}\label{lem:expansion}
Let $F$ be locally biholomorphic near the origin, with $J_F(0)=1$ and $\nabla J_F(0)=\Lambda=(\lambda_1,\dots,\lambda_n)$. Then
\[
\frac{\nabla J_F}{J_F}(\zeta)=\Lambda+A\,\zeta+O(|\zeta|^2),\qquad\zeta\to0,
\]
where $A=(A_{ij})$ is the symmetric matrix
\begin{equation}\label{eq:matrizA}
A_{ij}=\sum_{k=1}^nS^k_{ij}F(0)\,\lambda_k-(n+1)\,S^0_{ij}F(0)+\frac{\lambda_i\lambda_j}{n+1}.
\end{equation}
\end{lem}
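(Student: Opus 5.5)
The natural route is to use the fact from Theorem~\ref{thm:oda} that $u_0=J_F^{-1/(n+1)}$ solves the system \eqref{eq:sistema} with coefficients $S^k_{ij}F$, $S^0_{ij}F$. We then translate the second-order equation for $u_0$ into a first-order expansion of the logarithmic gradient $\nabla J_F/J_F=\nabla\log J_F$. Write $\psi=\log J_F$, so that $u_0=e^{-\psi/(n+1)}$, $\psi(0)=0$ and $\nabla\psi(0)=\Lambda$ (since $J_F(0)=1$). The matrix $A$ in the statement is the Hessian of $\psi$ at the origin, so the lemma amounts to computing $\partial_{ij}\psi(0)$.

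First, differentiate $u_0$:
\[
\partial_iu_0=-\tfrac{1}{n+1}u_0\,\partial_i\psi,\qquad
\partial_{ij}u_0=u_0\Bigl[-\tfrac{1}{n+1}\partial_{ij}\psi+\tfrac{1}{(n+1)^2}\partial_i\psi\,\partial_j\psi\Bigr].
\]
Substitute these into the system
\[
\partial_{ij}u_0=\sum_kS^k_{ij}F\,\partial_ku_0+S^0_{ij}F\,u_0,
\]
which is exactly the content of definition \eqref{eq:S0}. Dividing by $u_0\neq0$ and multiplying by $-(n+1)$ gives the pointwise identity
\[
\partial_{ij}\psi=\sum_kS^k_{ij}F\,\partial_k\psi-(n+1)S^0_{ij}F+\frac{\partial_i\psi\,\partial_j\psi}{n+1},
\]
valid on the whole domain. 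Evaluating at $0$, with $\partial_k\psi(0)=\lambda_k$, yields precisely \eqref{eq:matrizA}. Symmetry of $A$ is immediate, either from the symmetry of the Hessian or from the symmetry of $S^k_{ij}$ and $S^0_{ij}$ in $i,j$.

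Finally, Taylor's theorem for the holomorphic vector function $\nabla\psi$ gives
\[
\nabla\psi(\zeta)=\nabla\psi(0)+\Hess\psi(0)\,\zeta+O(|\zeta|^2)=\Lambda+A\zeta+O(|\zeta|^2).
\]
Since $\nabla J_F/J_F=\nabla\psi$, this is the claimed expansion.

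There is no real obstacle. The only care needed is with sign and normalization conventions: the factor $-(n+1)$ and the sign of the quadratic term come out correctly only if the exponent $-1/(n+1)$ in $u_0$ is tracked consistently. Note also that we need $\log J_F$ only locally near $0$, where $J_F\ne0$, so simple connectivity of the domain plays no role.
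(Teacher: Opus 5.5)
Your proof is correct and follows essentially the same route as the paper: both use that $u_0=J_F^{-1/(n+1)}$ solves Oda's system and convert the equation for $\partial_{ij}u_0$ into the Hessian of $\log J_F$ at the origin. The only difference is cosmetic. You derive the identity for $\partial_{ij}\log J_F$ pointwise and then evaluate at $0$, while the paper evaluates at $0$ directly using $u_0(0)=1$. You also state the Taylor step explicitly, which the paper leaves implicit.
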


\begin{proof}
Let $u_0=J_F^{-1/(n+1)}$, so that $\partial_i\log J_F=-(n+1)\,\partial_iu_0/u_0$. Since $u_0(0)=1$,
\[
\partial_j\Bigl(\frac{\partial_iu_0}{u_0}\Bigr)(0)=\partial_{ij}u_0(0)-\partial_iu_0(0)\,\partial_ju_0(0).
\]
Now $\nabla u_0(0)=-\Lambda/(n+1)$, and $u_0$ is a solution of the system \eqref{eq:sistema} associated with $F$, hence
\[
\partial_{ij}u_0(0)=-\frac{1}{n+1}\sum_kS^k_{ij}F(0)\,\lambda_k+S^0_{ij}F(0).
\]
Multiplying by $-(n+1)$ we obtain \eqref{eq:matrizA}.
\end{proof}

\begin{lem}\label{lem:automorfismo}
For each $\zeta\in\B$ there exists $\sigma\in\Aut(\B)$ with $\sigma(0)=\zeta$ such that
\[
D\sigma(0)=\Id+O(|\zeta|^2),\qquad\frac{\nabla J_\sigma}{J_\sigma}(0)=-(n+1)\,\overline{\zeta}.
\]
\end{lem}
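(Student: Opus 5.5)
Take the standard involutive automorphism of the ball (Rudin), composed with $-\Id$ so that it sends $0$ to $\zeta$. Concretely, set $s=\sqrt{1-|\zeta|^2}$, let $P_\zeta v=\frac{v\cdot\zeta}{|\zeta|^2}\zeta$ be the orthogonal projection onto $\C\zeta$ and $Q_\zeta=\Id-P_\zeta$. Define
\[
\sigma(z)=\frac{\zeta+P_\zeta z+s\,Q_\zeta z}{1+z\cdot\zeta},
\]
which is $-\varphi_\zeta(-z)$ in Rudin's notation. It is an automorphism of $\B$, it has the linear fractional form \eqref{eq:autbola}, and clearly $\sigma(0)=\zeta$. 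The denominator is $l_0(z)=1+\sum_i z_i\overline{\zeta_i}=1+\bil{\overline\zeta}{z}$.

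Compute $D\sigma(0)$ directly. Differentiating the quotient at $0$ gives
\[
D\sigma(0)v=P_\zeta v+sQ_\zeta v-(v\cdot\zeta)\,\zeta .
\]
Since $s=1-O(|\zeta|^2)$, we have $sQ_\zeta v=Q_\zeta v+O(|\zeta|^2)|v|$, and $(v\cdot\zeta)\zeta=O(|\zeta|^2)|v|$. Hence
\[
D\sigma(0)=P_\zeta+Q_\zeta+O(|\zeta|^2)=\Id+O(|\zeta|^2).
\]
In fact $D\sigma(0)=s^2P_\zeta+sQ_\zeta$.

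For the Jacobian, $\sigma$ is a Möbius transformation, so \eqref{eq:jacmobius} gives $J_\sigma(z)=c\,l_0(z)^{-(n+1)}$ with $c$ a nonzero constant. Therefore
\[
\nabla\log J_\sigma(z)=-(n+1)\,\frac{\nabla l_0(z)}{l_0(z)}=-(n+1)\,\frac{\overline\zeta}{1+\bil{\overline\zeta}{z}},
\]
and at $z=0$ this equals $-(n+1)\overline\zeta$. Here $\nabla$ denotes the holomorphic gradient paired bilinearly, as in \eqref{eq:jacobi}.

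The only point that needs checking is that $\sigma$ really is an automorphism with the stated denominator. This is the standard identity
\[
1-|\sigma(z)|^2=\frac{(1-|\zeta|^2)(1-|z|^2)}{|1+z\cdot\zeta|^2},
\]
which one verifies by splitting $z$ into its components along $\C\zeta$ and orthogonal to it. Alternatively, cite \cite{Rudin} and note that composing with the unitary map $-\Id$ preserves the ball.
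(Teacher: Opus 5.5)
Your proof is correct and follows the paper's: your $\sigma$ is precisely the map \eqref{eq:sigma} conjugated by a unitary rotation carrying $e_1$ to $\zeta/|\zeta|$, written invariantly (so $D\sigma(0)=s^2P_\zeta+sQ_\zeta$ is the paper's diagonal matrix), and reading $\nabla J_\sigma/J_\sigma(0)$ off \eqref{eq:jacmobius} is the same computation as the paper's explicit formula for $J_\sigma$. There are two harmless slips: in Rudin's notation your map is $\varphi_\zeta(-z)$, not $-\varphi_\zeta(-z)$ (the latter sends $0$ to $-\zeta$), and the case $\zeta=0$ should be treated separately with $\sigma=\Id$, since $P_\zeta$ is undefined there.
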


\begin{proof}
If $\zeta=(\zeta_1,0,\dots,0)$ we take
\begin{equation}\label{eq:sigma}
\sigma(z)=\Bigl(\frac{z_1+\zeta_1}{1+\overline{\zeta_1}z_1},\ \frac{\sqrt{1-|\zeta_1|^2}\;z_2}{1+\overline{\zeta_1}z_1},\ \dots,\ \frac{\sqrt{1-|\zeta_1|^2}\;z_n}{1+\overline{\zeta_1}z_1}\Bigr),
\end{equation}
for which
\[
J_\sigma(z)=\frac{(1-|\zeta_1|^2)^{\frac{n+1}{2}}}{(1+\overline{\zeta_1}z_1)^{n+1}},\qquad D\sigma(0)=\operatorname{diag}\bigl(1-|\zeta_1|^2,\sqrt{1-|\zeta_1|^2},\dots,\sqrt{1-|\zeta_1|^2}\bigr).
\]
The general case is obtained by conjugating with a unitary rotation.
\end{proof}

\begin{thm}[\cite{CH13}]\label{thm:caracteristica}
Let $F_0\in\Fa$ be extremal for $A_\alpha$, and suppose, after a rotation, that $\nabla J_{F_0}(0)=(\lambda,0,\dots,0)$ with $\lambda=A_\alpha>0$. Then, at the origin,
\begin{align}
&\lambda^2+(n+1)\,S^1_{11}F_0\,\lambda-(n+1)^2S^0_{11}F_0-(n+1)^2=0,\label{eq:carac1}\\
&S^1_{1j}F_0\,\lambda-(n+1)\,S^0_{1j}F_0=0,\qquad j=2,\dots,n.\label{eq:carac2}
\end{align}
\end{thm}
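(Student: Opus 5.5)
The plan is to perturb the extremal mapping $F_0$ by the Koebe transforms \eqref{eq:koebe} built from the automorphisms of Lemma~\ref{lem:automorfismo}. By linear invariance these stay in $\Fa$, so none of them can have $|\nabla J(0)|>\lambda$. I will then expand $|\nabla J(0)|^2$ to first order in the parameter $\zeta$ and read off the vanishing of the linear term.

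Fix $\zeta\in\B$ small, let $\sigma$ be as in Lemma~\ref{lem:automorfismo}, and put $G=\Lambda_\sigma(F_0)\in\Fa$. Since the prefactor in \eqref{eq:koebe} is a constant matrix,
\[
J_G(z)=\frac{J_{F_0}(\sigma(z))\,J_\sigma(z)}{J_{F_0}(\zeta)\,J_\sigma(0)},
\]
so $J_G(0)=1$ and $\nabla J_G(0)=\nabla\log J_G(0)$. By the chain rule the gradient of $\log J_{F_0}\circ\sigma$ at $0$ is $D\sigma(0)^t\,(\nabla J_{F_0}/J_{F_0})(\zeta)$. Using Lemma~\ref{lem:expansion} for the second factor and Lemma~\ref{lem:automorfismo} for $D\sigma(0)$ and for $\nabla J_\sigma/J_\sigma(0)$, I obtain
\[
\nabla J_G(0)=\Lambda+A\zeta-(n+1)\overline{\zeta}+O(|\zeta|^2),
\]
where $\Lambda=(\lambda,0,\dots,0)$ and $A$ is the symmetric matrix \eqref{eq:matrizA} computed for $F_0$. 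The $O(|\zeta|^2)$ error in $D\sigma(0)$ only contributes at second order.

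Next I impose extremality. Since $\lambda$ is real,
\[
|\nabla J_G(0)|^2=\lambda^2+2\lambda\,\RE\Bigl(\sum_{j}A_{1j}\zeta_j-(n+1)\overline{\zeta_1}\Bigr)+O(|\zeta|^2),
\]
and this must be $\le\lambda^2$ for all small $\zeta$. With $\lambda>0$, the real-linear functional $\zeta\mapsto\RE\bigl(\sum_jA_{1j}\zeta_j-(n+1)\overline{\zeta_1}\bigr)$ is $\le0$ near the origin. It is also odd in $\zeta$, so it vanishes identically.

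Rewriting $\RE\bigl((n+1)\overline{\zeta_1}\bigr)=\RE\bigl((n+1)\zeta_1\bigr)$, the functional becomes $\RE\bigl((A_{11}-(n+1))\zeta_1+\sum_{j\ge2}A_{1j}\zeta_j\bigr)$. Its vanishing for all complex $\zeta_j$ forces $A_{11}=n+1$ and $A_{1j}=0$ for $j\ge2$. Substituting \eqref{eq:matrizA} with $\lambda_k=\lambda\delta_{k1}$ gives
\[
S^1_{11}F_0\lambda-(n+1)S^0_{11}F_0+\frac{\lambda^2}{n+1}=n+1 .
\]
Multiplying by $n+1$ yields \eqref{eq:carac1}. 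Similarly, $S^1_{1j}F_0\lambda-(n+1)S^0_{1j}F_0=0$ for $j\ge2$, which is \eqref{eq:carac2}; here the term $\lambda_1\lambda_j/(n+1)$ vanishes because $\lambda_j=0$.

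The only delicate point is the bookkeeping in the first step. One must check that the gradient transforms with $D\sigma(0)^t$ and that the normalizing constants in $\Lambda_\sigma$ do not affect $\nabla\log J$. One must also check that the conjugate term $-(n+1)\overline\zeta$ from Lemma~\ref{lem:automorfismo} has the right sign, since this term produces the constant $-(n+1)^2$ in \eqref{eq:carac1}.
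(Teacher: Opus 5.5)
Your proposal is correct and follows the paper's proof essentially step for step. It uses the same Koebe transforms $\Lambda_\sigma(F_0)$ with $\sigma$ from Lemma~\ref{lem:automorfismo}, the same expansion $\nabla J_G(0)=\Lambda+A\zeta-(n+1)\overline\zeta+O(|\zeta|^2)$ via Lemma~\ref{lem:expansion}, and the same conclusion $A_{11}=n+1$, $A_{1j}=0$ read off from \eqref{eq:matrizA}. One small imprecision: extremality only gives the linear functional $\le O(|\zeta|^2)$, not $\le0$ outright, but replacing $\zeta$ by $t\zeta$ and letting $t\to0^+$ removes the error term, and then your oddness argument applies.
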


\begin{proof}
Let $\zeta\in\B$ be small, $\sigma$ as in Lemma~\ref{lem:automorfismo} and $G=\Lambda_\sigma(F_0)\in\Fa$. From \eqref{eq:koebe},
\[
J_G(z)=J_\sigma(0)^{-1}J_{F_0}(\zeta)^{-1}\,J_{F_0}(\sigma(z))\,J_\sigma(z),
\]
and differentiating logarithmically at $z=0$, with the two lemmas,
\[
\nabla J_G(0)=\frac{\nabla J_{F_0}}{J_{F_0}}(\zeta)\,D\sigma(0)+\frac{\nabla J_\sigma}{J_\sigma}(0)=\Lambda+A\,\zeta-(n+1)\,\overline\zeta+O(|\zeta|^2),
\]
with $\Lambda=(\lambda,0,\dots,0)$. Only the first component contributes to the modulus to first order, and we find
\[
|\nabla J_G(0)|^2=\lambda^2+2\lambda\,\RE\Bigl[\sum_{j=1}^nA_{1j}\zeta_j-(n+1)\,\zeta_1\Bigr]+O(|\zeta|^2).
\]
Since $F_0$ is extremal, the left-hand side does not exceed $\lambda^2$. The real part of a linear form in $\zeta$ that is less than or equal to $O(|\zeta|^2)$ for all small $\zeta$ vanishes identically, and therefore
\[
A_{11}=n+1,\qquad A_{1j}=0\quad(j\ge2).
\]
With $\Lambda=(\lambda,0,\dots,0)$, formula \eqref{eq:matrizA} gives $A_{1j}=S^1_{1j}\lambda-(n+1)S^0_{1j}+\delta_{1j}\lambda^2/(n+1)$, and the two preceding relations are \eqref{eq:carac1} and \eqref{eq:carac2}.
\end{proof}

Equation \eqref{eq:carac1} is the analogue of Pommerenke's characteristic equation. It involves derivatives of $F_0$ up to order three, through $S^0_{11}$, and it is quadratic in $\lambda$.

\begin{thm}[\cite{CH13}]\label{thm:ordensup}
\[
\ord\Fa\le\frac{n+1}{2}\Bigl(\frac{\sqrt{n+1}}{2}\,\alpha+\sqrt{1+\frac{n+1}{4}\,\alpha^2+C(n,\alpha)}\Bigr),
\]
with $C(n,\alpha)$ as in \eqref{eq:Cnalpha}.
\end{thm}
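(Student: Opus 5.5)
We plan to read the bound off the characteristic equation \eqref{eq:carac1} of Theorem~\ref{thm:caracteristica}, treating it as a quadratic inequality in $\lambda=A_\alpha=2\ord\Fa$. By Theorem~\ref{thm:normal}, $\Fa$ is compact and $F\mapsto|\nabla J_F(0)|$ is continuous, so an extremal $F_0$ exists. If $A_\alpha=0$ there is nothing to prove, so we may assume $\lambda>0$. After composing with a unitary rotation $U$, namely $U^{-1}F_0(U\,\cdot)$, which preserves $\Fa$ by Theorem~\ref{thm:invariancia} and Corollary~\ref{cor:mobius}, we may assume $\nabla J_{F_0}(0)=(\lambda,0,\dots,0)$. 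Theorem~\ref{thm:caracteristica} then gives
\[
\lambda^2+(n+1)S^1_{11}F_0(0)\,\lambda-(n+1)^2\bigl(S^0_{11}F_0(0)+1\bigr)=0 .
\]

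Next we bound the two coefficients at the origin using the pointwise estimates. Lemma~\ref{lem:cotas} with $z=0$ (so $\tau=1$) gives $|S^1_{11}F_0(0)|\le\sqrt{n+1}\,\alpha$. The estimate \eqref{eq:cotasS0} at $z_1=0$ gives $|S^0_{11}F_0(0)|\le C(n,\alpha)$. From the equation and the triangle inequality we obtain
\[
\lambda^2\le(n+1)^{3/2}\alpha\,\lambda+(n+1)^2\bigl(1+C(n,\alpha)\bigr).
\]

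The final step is solving this quadratic inequality for $\lambda>0$. Writing $\lambda=(n+1)\mu$, it becomes $\mu^2-\sqrt{n+1}\,\alpha\,\mu-(1+C)\le0$. Hence $\mu$ is at most the positive root
\[
\frac{\sqrt{n+1}}{2}\alpha+\sqrt{\frac{n+1}{4}\alpha^2+1+C(n,\alpha)}.
\]
Since $\ord\Fa=\lambda/2=\frac{n+1}{2}\mu$, this is exactly the stated bound.

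The conceptual difficulty is already absorbed in the earlier results. The variational derivation of \eqref{eq:carac1} needs compactness of $\Fa$, and the control of $S^0_{11}$, which is a third-order quantity, needs \eqref{eq:S0ii} together with Cauchy estimates, as behind \eqref{eq:cotasS0}. In this proof the only point requiring care is to check that \eqref{eq:cotasS0} is applied at the origin, where the estimate holds, and that the sign conventions in \eqref{eq:carac1} allow the absolute-value estimate as written. Since we only use upper bounds for $|S^1_{11}|$ and $|S^0_{11}|$, the signs do not matter.
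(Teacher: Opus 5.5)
Your proposal is correct and follows the paper's proof. Both arguments combine the characteristic equation \eqref{eq:carac1} for an extremal $F_0$ with the bounds $|S^1_{11}F_0(0)|\le\sqrt{n+1}\,\alpha$ and $|S^0_{11}F_0(0)|\le C(n,\alpha)$ at the origin. The only difference is cosmetic: the paper completes the square before taking moduli, while you take moduli first and solve the resulting real quadratic inequality, which gives the identical bound and avoids complex square roots.
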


\begin{proof}
Completing the square in \eqref{eq:carac1},
\[
\Bigl(\lambda+\tfrac12(n+1)S^1_{11}\Bigr)^2=(n+1)^2\Bigl(1+\tfrac14(S^1_{11})^2+S^0_{11}\Bigr),
\]
and it suffices to use $|S^1_{11}F_0(0)|\le\sqrt{n+1}\,\alpha$ and $|S^0_{11}F_0(0)|\le C(n,\alpha)$, which are Lemma~\ref{lem:cotas} and \eqref{eq:cotasS0} at the origin.
\end{proof}

For $\alpha=0$ the bound equals $(n+1)/2$, which is the exact value, since for $T_a$ we have $|\nabla J_{T_a}(0)|=(n+1)|a|$ with $|a|\le1$. Together with Theorem~\ref{thm:ordennorma} we obtain
\[
\|\ord\|\,\Fa\le\sqrt{n+1}\,\alpha+\sqrt{1+\frac{n+1}{4}\,\alpha^2+C(n,\alpha)} .
\]

To bound the order from below examples are needed, and Theorem~\ref{thm:oda} allows us to construct them without writing down mappings. By Corollary~\ref{cor:soluciones}, it suffices to exhibit a completely integrable system in canonical form and a solution $u$ that does not vanish in $\B$, with $u(0)=1$ and $|\nabla u(0)|$ large. The simplest choice is to take constant coefficients, because then the norm is computed at a single point.

\begin{lem}[\cite{CH13}]\label{lem:constantes}
If $F$ is locally biholomorphic in $\B$ and all the $S^k_{ij}F$, $k\ge1$, are constant, then $\|S_F\|=\|S_F(0)\|$.
\end{lem}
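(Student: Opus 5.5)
The plan is to prove the inequality $\|S_F(z)\|\le\|S_F(0)\|$ for every $z\in\B$; equality at $z=0$ then gives $\|S_F\|=\sup_z\|S_F(z)\|=\|S_F(0)\|$. Let $Q$ be the constant symmetric bilinear operator with $S_F(z)=Q$ for all $z$. Since $Q$ is constant, the Schwarzian does not change under a unitary change of variables except for conjugation by the unitary matrix, and the Bergman metric is unitarily invariant. So I will replace $F$ by $F\circ U$ for a suitable unitary $U$. The operator $S_{F\circ U}=U^{-1}Q(U\cdot,U\cdot)$ is again constant and has the same norm at the origin. This reduces the problem to points $z=(z_1,0,\dots,0)$ with $\tau=1-|z_1|^2\in(0,1]$.

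At such a point I use the explicit form of the Bergman metric given after \eqref{eq:bergmanbola}:
\[
\|v\|_z^2=(n+1)\Bigl[\tau^{-2}|v_1|^2+\tau^{-1}(|v_2|^2+\cdots+|v_n|^2)\Bigr].
\]
Set $t=\sqrt\tau\in(0,1]$ and parametrize the $\|\cdot\|_z$-unit vectors as $v=(t^2a_1,t\,a_2,\dots,t\,a_n)$ with $|a|=1/\sqrt{n+1}$. Write $b=(t\,a_1,a_2,\dots,a_n)$, so that $v=t\,b$. By homogeneity $Q(v,v)=t^2Q(b,b)$, and therefore
\[
\|Q(v,v)\|_z^2=(n+1)\Bigl[t^{-4}\,t^4|Q^1(b,b)|^2+t^{-2}\,t^4\sum_{k\ge2}|Q^k(b,b)|^2\Bigr]=(n+1)\Bigl[|Q^1(b,b)|^2+t^2\sum_{k\ge2}|Q^k(b,b)|^2\Bigr].
\]
Here $Q^k(b,b)$ is the $k$-th component of $Q(b,b)$. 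This is the same anisotropy bookkeeping as in Lemma~\ref{lem:cotas}, but now every power of $\tau$ comes out nonnegative.

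Since $t\le1$, the right-hand side is at most $(n+1)|Q(b,b)|^2$. Moreover $|b|\le|a|=1/\sqrt{n+1}$, so $b$ lies in the closed $\|\cdot\|_0$-unit ball, because the metric at the origin is $(n+1)$ times the Euclidean one. Hence
\[
\|Q(v,v)\|_z\le\sqrt{n+1}\,|Q(b,b)|=\|Q(b,b)\|_0\le\|S_F(0)\|\,\|b\|_0^2\le\|S_F(0)\|.
\]
Taking the supremum over $\|v\|_z=1$ gives $\|S_F(z)\|\le\|S_F(0)\|$.

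The only delicate point is the bookkeeping of the weights. One must check that the substitution $v=t\,E_tb$, with $E_t=\operatorname{diag}(t,1,\dots,1)$, exactly absorbs the factors $\tau^{-1}$ and $\tau^{-1/2}$ of the metric on the output vector. It must leave only the nonnegative power $t^2$ in front of the tangential components, and it must map the $\|\cdot\|_z$-unit sphere into the $\|\cdot\|_0$-unit ball. The homogeneity $Q(tE_tb,tE_tb)=t^2Q(E_tb,E_tb)$, where $b$ already incorporates $E_t$, makes both checks routine. I expect no integrability condition of Theorem~\ref{thm:oda} to be needed, since the argument uses only that $Q$ is a constant bilinear operator.
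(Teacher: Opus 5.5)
Your proof is correct. It differs from the paper's in how you compare the metric at $z$ with the metric at the origin.

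The paper does not rotate to $(z_1,0,\dots,0)$. It uses only the two isotropic bounds
$\frac{(n+1)|w|^2}{1-|z|^2}\le\|w\|_z^2\le\frac{(n+1)|w|^2}{(1-|z|^2)^2}$, which hold at every $z$ by Cauchy--Schwarz.

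\begin{itemize}
\item The lower bound, applied to the input, gives $|v|^2\le(1-|z|^2)/(n+1)$ when $\|v\|_z=1$.
\item The upper bound, applied to the output $S_F(v,v)$, costs a factor $\sqrt{n+1}/(1-|z|^2)$.
\item The two powers of $1-|z|^2$ cancel, leaving $\frac{1}{\sqrt{n+1}}\sup_{|e|=1}|S_F(e,e)|=\|S_F(0)\|$.
\end{itemize}

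Your unitary reduction and the exact diagonal form of the metric show the same cancellation component by component, at the price of some bookkeeping. In return you get an exact identity, $\|Q(v,v)\|_z^2=(n+1)\bigl[|Q^1(b,b)|^2+\tau\sum_{k\ge2}|Q^k(b,b)|^2\bigr]$, where $b$ ranges over a subset of the Bergman unit ball at the origin that is squeezed by $\sqrt\tau$ in the radial coordinate. This identity shows where the inequality $\|S_F(z)\|\le\|S_F(0)\|$ can be strict: the tangential output components are damped by $\tau$, and the admissible inputs shrink radially.

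For the lemma as stated, the paper's argument is shorter and needs no normalization. Neither argument uses the integrability of the system, only that $S_F$ is a constant bilinear map, as you observed.
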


\begin{proof}
From \eqref{eq:bergmanbola} and the Cauchy--Schwarz inequality, for every vector $w$
\[
\frac{(n+1)\,|w|^2}{1-|z|^2}\le\|w\|_z^2\le\frac{(n+1)\,|w|^2}{(1-|z|^2)^2}.
\]
If $\|v\|_z=1$, the first inequality gives $|v|^2\le(1-|z|^2)/(n+1)$, and the second, applied to $w=S_F(z)(v,v)$, which does not depend on $z$, gives
\[
\|S_F(z)(v,v)\|_z\le\frac{\sqrt{n+1}\,|v|^2}{1-|z|^2}\,\Bigl|S_F\Bigl(\frac{v}{|v|},\frac{v}{|v|}\Bigr)\Bigr|\le\frac{1}{\sqrt{n+1}}\,\sup_{|e|=1}|S_F(e,e)|=\|S_F(0)\| .\qedhere
\]
\end{proof}

\begin{thm}[\cite{CH13}]\label{thm:ordeninf}
For $n=2$ we have $\ord\Fa\ge\tfrac94\alpha$, and for $n>2$,
\[
\ord\Fa\ge\frac{(n+1)^{3/2}}{2(n-1)}\,\alpha .
\]
\end{thm}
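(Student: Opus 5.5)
The plan is to follow the recipe announced just before Lemma~\ref{lem:constantes}. I will write down a completely integrable system \eqref{eq:sistema} in canonical form with \emph{constant} coefficients $P^k_{ij}$, $k\ge1$, and exhibit a solution $u$ with $u(0)=1$ that does not vanish in $\B$. Corollary~\ref{cor:soluciones} (equivalently Theorem~\ref{thm:oda}(b)) then produces a mapping $G$, holomorphic and locally biholomorphic in $\B$, with $J_G=c\,u^{-(n+1)}$ and $S_G=(P^k_{ij})$. Normalizing by an affine map on the left gives $F\in\mathcal{LS}(\B)$ with the same Schwarzian and $\nabla J_F(0)=-(n+1)\nabla u(0)$. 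By Lemma~\ref{lem:constantes} and the fact that the Bergman metric at the origin is $(n+1)$ times the Euclidean one,
\[
\|S_F\|=\|S_F(0)\|=\frac{1}{\sqrt{n+1}}\sup_{|e|=1}|S_F(0)(e,e)|.
\]
Hence $F\in\Fa$ with $\alpha$ equal to this number, and $\ord\Fa\ge\frac{n+1}{2}|\nabla u(0)|$ by \eqref{eq:orden}.

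For the coefficients I take the pattern of Example~\ref{ex:producto}, but with the parameter freed from $f''/f'$. I set $S^1_{11}=a$ and $S^j_{1j}=S^j_{j1}=-a/(n-1)$ for $j\ge2$, with all other $S^k_{ij}$, $k\ge1$, equal to zero. The canonical form \eqref{eq:canonica} holds by construction: for $i=1$ the trace is $a-(n-1)\frac{a}{n-1}=0$, and for $i\ge2$ the only possibly nonzero term is $S^1_{i1}=0$. The coefficients of order zero are forced by \eqref{eq:S0ii}--\eqref{eq:S0ij}; since the $S^k_{ij}$ are constant, the derivative terms drop. This gives, for instance,
\[
S^0_{11}=\frac{1}{n-1}\sum_{j,k}S^k_{1j}S^j_{k1}=\frac{a^2}{n-1}\Bigl(1+\frac{1}{n-1}\Bigr)=\frac{n\,a^2}{(n-1)^2},
\]
and similar constants for $S^0_{jj}$ and $S^0_{1j}$. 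I then have to verify complete integrability, that is, that the compatibility conditions $\partial_k(\partial_{ij}u)=\partial_j(\partial_{ik}u)$ hold identically in $(u,\nabla u)$. With constant coefficients these reduce to finitely many quadratic identities in $a$, checked by hand.

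The solution I want is an exponential depending only on $z_1$, namely $u=e^{\lambda z_1}$, which never vanishes. Substituting it into the system gives the following:
\begin{itemize}
\item The $(1,1)$ equation reduces to $\lambda^2=a\lambda+S^0_{11}$.
\item The $(1,j)$ equations hold automatically, since $u_j=0$ and $S^0_{1j}=0$.
\item The $(j,j)$ equations require $S^0_{jj}=0$, which must come out of \eqref{eq:S0ii}; if it does not, I adjust the tangential coefficients.
\end{itemize}
Taking the larger root,
\[
\lambda=a\Bigl(\tfrac12+\sqrt{\tfrac14+\tfrac{n}{(n-1)^2}}\Bigr)=\frac{n}{n-1}\,a .
\]
This gives $\ord\Fa\ge\frac{(n+1)n}{2(n-1)}\,a$. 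It remains to compute $\sup_{|e|=1}|S(e,e)|$, where
\[
S(e,e)=\Bigl(a e_1^2,\ -\tfrac{2a}{n-1}e_1e_2,\ \dots,\ -\tfrac{2a}{n-1}e_1e_n\Bigr).
\]
This is a one-variable maximization in $|e_1|^2$. Expressing $a$ through $\alpha$ should give the claimed $(n+1)^{3/2}\alpha/(2(n-1))$ for $n>2$.

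The main obstacle is this final maximization together with the integrability check. The supremum changes character at small $n$, where the tangential term $\frac{2}{n-1}|e_1||e'|$ dominates. I expect this is why $n=2$ gives the separate constant $\tfrac94$. For $n=2$ I would redo the optimization, possibly replacing the coefficients by $S^1_{11}=a$, $S^2_{12}=-a$ plus a free tangential coefficient such as $S^1_{22}$ or $S^2_{11}$, and choosing it to reduce $\sup|S(e,e)|$ while keeping $e^{\lambda z_1}$ a solution.
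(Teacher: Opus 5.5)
Your proposal is correct. It is the paper's construction with one change, and that change matters.

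Set $a=-\sqrt{n+1}\,\alpha$ and $\kappa=-a/(n-1)=\sqrt{n+1}\,\alpha/(n-1)$. Then your coefficients are exactly the paper's: $S^1_{11}=-\sqrt{n+1}\,\alpha$ and $S^k_{1k}=\kappa$. The order-zero terms come out as $S^0_{11}=n\kappa^2$, with every other $S^0_{ij}=0$, so no adjustment of tangential coefficients is needed.

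The change is the root. The paper uses $u=e^{\kappa z_1}$, which is your other root $-a/(n-1)$. Your root $\frac{n}{n-1}a$ equals $-n\kappa$, and $e^{-n\kappa z_1}$ is also a nonvanishing solution; it appears in the paper's list of solutions but is not used. As a result, your last step will not reproduce the stated constant. For $n>2$ one has $\sup_{|e|=1}|S(e,e)|=|a|$, so $\|S_F\|=|a|/\sqrt{n+1}$, and you obtain
\[
\ord\Fa\ge\frac{n\,(n+1)^{3/2}}{2(n-1)}\,\alpha .
\]
This is $n$ times the statement, and of course implies it.

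This is not an error. Your mapping is $(e^{(n+1)\kappa z_1},\,z'e^{(n+1)\kappa z_1})$, which is the image of the paper's mapping $(e^{-(n+1)\kappa z_1},z')$ under the Möbius transformation $w\mapsto(1/w_1,w'/w_1)$. That transformation is holomorphic on the image of the paper's mapping, since $w_1\ne0$ there. So the Schwarzian is unchanged, while $|\nabla\log J(0)|$ goes from $(n+1)|\kappa|$ to $n(n+1)|\kappa|$.

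For $n=2$ you do not need new coefficients either. Write $t=|e_1|^2$. Then $|S(e,e)|^2=a^2(4t-3t^2)\le\frac43a^2$, with equality at $t=\frac23$, so $\beta:=\|S_F\|=\frac23|a|$. Your root gives $\ord\mathcal F_\beta\ge3|a|=\frac92\beta$, while the paper's root gives exactly $\frac94\beta$.

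Finally, instead of checking the compatibility conditions, it is quicker to exhibit the $n+1$ independent solutions $e^{\kappa z_1}$, $e^{-n\kappa z_1}$ and $z_je^{\kappa z_1}$ ($j\ge2$). This gives complete integrability and the basis needed for Theorem~\ref{thm:oda}(b) at once.
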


\begin{proof}
Let $\kappa=\sqrt{n+1}\,\alpha/(n-1)$ and consider the system with constant coefficients
\[
S^1_{11}=-\sqrt{n+1}\,\alpha,\qquad S^k_{1k}=\kappa\quad(k\ge2),
\]
with the remaining $S^k_{ij}$ equal to zero. It satisfies \eqref{eq:canonica}, and formulas \eqref{eq:S0ii}--\eqref{eq:S0ij} give $S^0_{11}=n\kappa^2$ and $S^0_{ij}=0$ in the remaining cases. The system reads
\[
u_{11}=-\sqrt{n+1}\,\alpha\,u_1+n\kappa^2u,\qquad u_{1j}=\kappa\,u_j\quad(j\ge2),\qquad u_{ij}=0\quad(i,j\ge2),
\]
and it can be integrated by hand. Its solutions are the combinations of $e^{\kappa z_1}$, $e^{-n\kappa z_1}$ and $e^{\kappa z_1}z_j$, $j\ge2$. There are $n+1$ of them, and the system is completely integrable. The solution $u=e^{\kappa z_1}$ does not vanish, and has $u(0)=1$, $\nabla u(0)=(\kappa,0,\dots,0)$. By Theorem~\ref{thm:oda}(b), applied to a basis of solutions whose first element is $u$, there is a mapping $G$ locally biholomorphic in $\B$, which we may normalize, with $J_G=u^{-(n+1)}$ and, therefore,
\[
|\nabla J_G(0)|=(n+1)\,\kappa=\frac{(n+1)^{3/2}}{n-1}\,\alpha .
\]
It remains to compute the norm. At the origin $S(v,v)=\bigl(-\sqrt{n+1}\,\alpha\,v_1^2,\,2\kappa v_1v_2,\dots,2\kappa v_1v_n\bigr)$, and maximizing over the vectors of Bergman norm one we find $\|S(0)\|=\alpha$ if $n>2$ and $\|S(0)\|=\tfrac{2}{\sqrt3}\alpha$ if $n=2$. By Lemma~\ref{lem:constantes} this is the value of $\|S_G\|$. For $n>2$ this gives $G\in\Fa$ and the stated bound. For $n=2$ it gives $G\in\mathcal F_\beta$ with $\beta=\tfrac{2}{\sqrt3}\alpha$ and $|\nabla J_G(0)|=3\sqrt3\,\alpha=\tfrac92\beta$.
\end{proof}

In summary, the order of $\Fa$ grows linearly with $\alpha$, between constants that differ by powers of $n$. The family $\Fa$ behaves, up to this point, like its one-variable model. The main point remains, which is to decide whether its elements are univalent.

\section{Univalence criteria in the ball}\label{sec:univalencia}

Recall the three steps in the proof of Theorem~\ref{thm:nehari}, which are the zero lemma, the reduction to a diameter and the Sturm comparison. The first has an exact analogue.

\begin{lem}[\cite{H07}]\label{lem:ceroscomunes}
Let $F:\Omega\to\C^n$ be locally biholomorphic in a simply connected domain. Then $F$ is univalent if and only if there do not exist $n$ linearly independent solutions of the system \eqref{eq:sistema} associated with $F$ that have two common zeros in $\Omega$.
\end{lem}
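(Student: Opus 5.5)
The plan is to imitate the proof of Lemma~\ref{lem:ceros1}, using the explicit description of the solution space from Theorem~\ref{thm:oda}(a). Every solution of the system associated with $F$ has the form
\[
u=u_0\bigl(a_0+a_1f_1+\dots+a_nf_n\bigr)=u_0\,\ell(F),\qquad \ell(w)=a_0+\bil{a}{w},
\]
with $u_0=J_F^{-1/(n+1)}$ nowhere zero. Thus $u\mapsto\ell$ is a linear isomorphism between the solution space and the space of affine functions on $\C^n$. Moreover $u$ vanishes at $p$ if and only if $\ell(F(p))=0$. Under this dictionary the lemma becomes a statement about affine functions vanishing at two points of $\C^n$.

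For the direction ``not univalent $\Rightarrow$ such solutions exist'', suppose $F(p)=F(q)=w_0$ with $p\ne q$. I take the $n$ affine functions $\ell_i(w)=w_i-(w_0)_i$, $i=1,\dots,n$. They are linearly independent, and all of them vanish at $w_0$. The corresponding solutions $u_i=u_0(f_i-(w_0)_i)$ are therefore $n$ linearly independent solutions with the common zeros $p$ and $q$.

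For the converse, suppose $u_1,\dots,u_n$ are linearly independent solutions vanishing at distinct points $p,q$, and let $\ell_1,\dots,\ell_n$ be the corresponding affine functions. They are independent and all vanish at $F(p)$ and at $F(q)$. I claim this forces $F(p)=F(q)$. If $w=F(p)\ne w'=F(q)$, every $\ell_i$ vanishes on the complex line through $w$ and $w'$. The affine functions vanishing on a line form a subspace of dimension $(n+1)-2=n-1$, so $n$ independent ones cannot all vanish there, a contradiction. Hence $F(p)=F(q)$ with $p\ne q$, and $F$ is not univalent.

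The only step needing care is this dimension count, namely that the conditions $\ell(w)=0$ and $\ell(w')=0$ are independent linear conditions on $(a_0,a)$ when $w\ne w'$. That is clear, since their difference is $\bil{a}{w-w'}=0$, a nontrivial condition on $a$. Simple connectivity enters only through Theorem~\ref{thm:oda}, which makes $J_F^{-1/(n+1)}$ single-valued and gives the stated basis of solutions. The argument is otherwise purely linear-algebraic, and I expect no real obstacle.
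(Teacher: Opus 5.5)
Your proof is correct and is essentially the paper's argument. The paper observes that $V_a=\{u:\ u(a)=0\}$ is a hyperplane of the $(n+1)$-dimensional solution space, and that $V_a=V_b$ exactly when $F(a)=F(b)$. Your dimension count on affine functions vanishing at $F(p)$ and $F(q)$ is the same linear algebra, transported through the isomorphism $u=u_0\,\ell(F)$.
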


\begin{proof}
Let $V$ be the space of solutions, of dimension $n+1$, and for $a\in\Omega$ let $V_a=\{u\in V:\ u(a)=0\}$. Since $u_0=J_F^{-1/(n+1)}$ does not vanish, $V_a$ is a hyperplane of $V$. With the basis \eqref{eq:base}, $F(a)=F(b)$ if and only if the vectors $(u_0(a),\dots,u_n(a))$ and $(u_0(b),\dots,u_n(b))$ are proportional, that is, if and only if $V_a=V_b$. And $V_a=V_b$ with $a\ne b$ means precisely that there are $n$ independent solutions that vanish at $a$ and at $b$.
\end{proof}

For $n=1$ we recover Lemma~\ref{lem:ceros1}. For $n\ge2$ the new difficulty is in plain view. It is no longer enough to prevent one solution from having two zeros; one must prevent an entire hyperplane of solutions from sharing them.

The second step can also be carried out. If $a$ and $b$ are the common zeros, we restrict the solutions to the segment joining them, or, in the ball and after an automorphism, to a diameter. What results is a system of second order ordinary differential equations with a term of order zero, which comes from $S^0_F$, and a term of \emph{first order}, which comes from $S_F$ and which does not exist in one variable. The comparison must take it into account, and in \cite{H07} this is achieved with a nonhomogeneous variant of Sturm's lemma.

With uniform Euclidean bounds for the coefficients one obtains in this way a criterion valid in any bounded convex domain \cite{H07}, in which the first order term is paid for with an explicit loss with respect to the condition $\delta\le(\pi/\diam\Omega)^2$ that the equation $u''+\delta u=0$ would give. The invariant statement in the ball is the following.

\begin{thm}[\cite{H07}]\label{thm:H07}
Let $F:\B\to\C^n$ be locally biholomorphic and such that
\begin{enumerate}
\item[(i)] $\|S_F(z)\|\le\alpha$ for all $z\in\B$;
\item[(ii)] $\displaystyle\sup_{\|a\|_z=\|b\|_z=1}|a^tS^0F(z)b|\le\frac{k}{1-|z|^2}$ for all $z\in\B$.
\end{enumerate}
If
\[
2\sqrt{n(n+1)}\,\bigl(e^{\pi/2}+2n+1\bigr)\alpha+(n+1)k\le1,
\]
then $F$ is univalent in $\B$.
\end{thm}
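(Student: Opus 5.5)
We argue by contradiction, following the three steps of Nehari's proof. Suppose $F(a)=F(b)$ with $a\ne b$. By Lemma~\ref{lem:ceroscomunes} there is an $n$-dimensional space $V_a=V_b$ of solutions of \eqref{eq:sistema} vanishing at both points. The hypotheses (i) and (ii) are invariant under automorphisms. For (i) this is Theorem~\ref{thm:invariancia}. For (ii), $S^0$ transforms as a bilinear form under Möbius changes of variable, by the chain rule and the fact that $J_{F\circ\sigma}^{-1/(n+1)}$ is a multiple of $(u_0\circ\sigma)\cdot l_0$. I would check this, or else replace (ii) by its invariant reformulation, since $\|a\|_z$-weights combine with $(1-|z|^2)^{-1}$ correctly. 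After composing with $\sigma\in\Aut(\B)$ we may therefore assume that $a=(-r,0,\dots,0)$ and $b=(r,0,\dots,0)$ lie on the real diameter $x\mapsto xe_1$, $-1<x<1$.

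Restrict to this diameter. For $u\in V_a$ put $y(x)=u(xe_1)$ and $Y(x)=\nabla u(xe_1)\in\C^n$. Differentiating $Y$ along $e_1$ and using \eqref{eq:sistemacompacto} gives the first order system
\[
Y'(x)=\bigl(\textstyle\sum_k S^k_{i1}\,\partial_k u+S^0_{i1}u\bigr)_{i},\qquad y'=Y_1 .
\]
Choose $u\in V_a$ so that it has consecutive zeros $x_1<x_2$ of $y$ on the segment. Since $V_a$ has dimension $n$, we may impose $n-1$ linear conditions. The natural choice is to kill the tangential components $Y_j(x_1)=0$ for $j\ge2$, leaving essentially a scalar problem in $y$ with $y'(x_1)\ne0$. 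The tangential components do not stay zero, however. They are driven by the coefficients $S^k_{1j}$, $S^k_{11}$, $S^0_{1j}$ weighted according to Lemma~\ref{lem:cotas}, which yields the first-order forcing term mentioned in the text. We get a coupled system
\[
y''=S^1_{11}y'+\textstyle\sum_{k\ge2}S^k_{11}Y_k+S^0_{11}y,\qquad Y_j'=S^1_{1j}y'+\textstyle\sum_{k\ge2}S^k_{1j}Y_k+S^0_{1j}y .
\]

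The key estimates come from Lemma~\ref{lem:cotas} and from (ii), which replaces the Cauchy estimates behind \eqref{eq:cotasS0}. I would measure $Y_k$ with the weight $\tau^{1/2}$ and $y'$ with $\tau$, where $\tau=1-x^2$, so that all coefficients become bounded by multiples of $\alpha/\tau$ or $k/\tau$. A Gronwall argument on the tangential system, integrated from $x_1$, bounds $\tau^{1/2}|Y|$ by an integral of $|y'|$ and $|y|$. The factor $e^{\pi/2}$ presumably arises here, as the exponential of $\int dx/(1-x^2)$ measured in the hyperbolic arc-length variable $t=\operatorname{artanh}x$, or as the exponential of an angle bound in the Prüfer-type comparison. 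Substituting into the equation for $y$ gives a scalar nonhomogeneous inequality
\[
|y''-S^1_{11}y'|\le\frac{(n+1)k}{\tau}\,|y|+\frac{c(n)\alpha}{\tau}\,\sup|\tau y'| ,
\]
to be compared, as in Theorem~\ref{thm:nehari}, with $v=\sqrt{1-x^2}$ or the appropriate Sturm function. Here I would use the nonhomogeneous Sturm lemma of \cite{H07}. First remove the $S^1_{11}y'$ term by an integrating factor, whose modulus is bounded using $|S^1_{11}|\le\sqrt{n+1}\,\alpha\tau^{-1}$. Then apply the Wronskian argument with $W=v'y-vy'$, taking $|y|$ as in Nehari's proof to get positivity. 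The combined constant $2\sqrt{n(n+1)}(e^{\pi/2}+2n+1)\alpha+(n+1)k\le1$ is exactly the condition under which the perturbation cannot flip the sign of $W$ between $x_1$ and $x_2$, giving the contradiction.

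The main obstacle is the first-order coupling. Because $y$ is complex, the reduction to $|y|$ loses the convenient identity $v''\ge\RE(\bar y y'')/|y|$ once a $y'$ term is present. Also, the tangential components cannot be chosen to vanish identically, so their size must be controlled in terms of $y$ over the whole interval $[x_1,x_2]$ uniformly in $r$. I would first try to run everything in the hyperbolic parameter $t$, where the weights become constants, and bound $\sup|Y|$ by $\sup|y'|$ times $e^{\pi/2}$-type Gronwall factors. This accepts non-sharp constants, consistent with the author's warning.
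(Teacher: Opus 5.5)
Your outline is the one the paper sketches: Lemma~\ref{lem:ceroscomunes}, restriction to a diameter after an automorphism, and a nonhomogeneous Sturm comparison. But what you have written is not yet a proof. The decisive comparison is only announced, and two of the steps you do commit to are wrong.

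\textbf{First, (ii) is not invariant under automorphisms.} Write $J_{F\circ\sigma}^{-1/(n+1)}=(u_0\circ\sigma)\,\ell$, where $\ell=J_\sigma^{-1/(n+1)}$ is affine. Then the solutions for $F\circ\sigma$ are $\ell\,(u\circ\sigma)$, and $S_\sigma=0$ gives $P_\sigma(e,e)=-2(\partial_e\ell/\ell)\,e$. Re-expressing $\nabla u\circ\sigma$ through the gradient of $\ell\,(u\circ\sigma)$ yields
\[
S^0_{F\circ\sigma}(z)(e,e)=S^0_F(\sigma(z))\bigl(D\sigma(z)e,D\sigma(z)e\bigr)+\frac{1}{n+1}\bil{\nabla\log J_\sigma(z)}{S_{F\circ\sigma}(z)(e,e)} .
\]
So for $n\ge2$ the coefficient $S^0$ is not a tensor. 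For instance, take the constant system in the proof of Theorem~\ref{thm:ordeninf} with $n=2$ and $\sigma=T_{e_1}$: $S^0_{11}(0)$ changes from $2\kappa^2$ to $2\kappa^2+\kappa$, although $D\sigma(0)=\Id$. Worse, even the tensorial part is only bounded by $k/(1-|\sigma(z)|^2)$, because $D\sigma(z)$ is a Bergman isometry. The weight $k/(1-|z|^2)$ is therefore not transported, and you cannot move $a,b$ to $\pm re_1$ and keep (ii).

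\textbf{Second, your zero-order bound does not follow from (ii).} On the diameter, evaluating (ii) on the Bergman-unit vectors $(1-x^2)e_1/\sqrt{n+1}$ and $\sqrt{(1-x^2)/(n+1)}\,e_j$ gives only
\[
|S^0_{11}|\le\frac{(n+1)k}{(1-x^2)^{3}},\qquad |S^0_{1j}|\le\frac{(n+1)k}{(1-x^2)^{5/2}},
\]
and not the $(n+1)k/(1-x^2)$ you use. With only this bound, no positive comparison function exists on $(-1,1)$, since $y''+c(1-x^2)^{-3}y=0$ is oscillatory at $\pm1$ for every $c>0$. So your scalar inequality is not what the hypotheses deliver.

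\textbf{The genuinely missing idea is the treatment of the first-order term.} By Lemma~\ref{lem:cotas}, $S^1_{11}$ and $S^k_{1j}$ ($k,j\ge2$) are controlled only at the level $\alpha(1-x^2)^{-1}$ on the diameter. Your integrating factor for $S^1_{11}y'$ and your Gronwall factor for the tangential components are therefore bounded only by $\exp\bigl(C\alpha\int_{x_1}^{x_2}(1-x^2)^{-1}dx\bigr)$. That is $C\alpha$ times the hyperbolic length of $[x_1,x_2]$ in the exponent, which is unbounded as the zeros approach the sphere. A uniform constant like $e^{\pi/2}$ would need a weight integrable on $(-1,1)$, such as $\tfrac12\int_{-1}^{1}(1-x^2)^{-1/2}dx=\tfrac{\pi}{2}$.

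The passage to $|y|$ also cannot proceed as in Theorem~\ref{thm:nehari}. At $x_1,x_2$ we have $y=0$ but $\nabla u\ne0$, so $\sum_kS^k_{11}\partial_ku$ cannot in general be bounded by a multiple of $|y|$ near these points. In Lemma~\ref{lem:radial} that term is bounded by a multiple of $|u|$, via $A_\alpha$, precisely because $u$ has no zeros. This is where a nonhomogeneous Sturm lemma, with the first-order term entering as a forcing, has to be formulated and proved, and where the numerical condition has to come from. Your proposal asserts both without deriving them.

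For $n\ge2$ there is a shorter route inside the chapter. The numerical hypothesis forces $\alpha\le\bigl[2\sqrt{n(n+1)}\,(e^{\pi/2}+2n+1)\bigr]^{-1}$. With Theorem~\ref{thm:ordensup} and \eqref{eq:Cnalpha} one checks that this gives $\delta(n,\alpha)<1$ in \eqref{eq:delta}; for $n=2$, $\delta<0.61$. Theorem~\ref{thm:univCH15} then gives univalence without using (ii) at all.
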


The theorem has a defect, and it is instructive to understand where it comes from. For $n\ge2$ hypothesis (ii) should be superfluous, because the $S^0_{ij}F$ are determined by the $S^k_{ij}F$ and are bounded in terms of $\alpha$ by \eqref{eq:cotasS0}. But those bounds carry the weight $(1-|z|^2)^{-2}$, and at that level the Sturm comparison with a first order term does not close. In one variable the critical case is resolved because the extremal solution $\sqrt{1-x^2}$ is known, and here nothing of the kind is known. Whether the condition $\|S_F\|\le\alpha_0$ alone suffices for univalence was left as an open question in \cite{H07}. The answer is affirmative, but it requires replacing the third step of Nehari's proof by another idea; this is the subject of the next section.

\medskip

\noindent\textbf{A criterion via Loewner chains.} Before that, we record a criterion of a different nature. Nehari's theorem admits a proof by Loewner chains, as does Becker's, and that proof can indeed be transcribed to $\C^n$.

\begin{thm}[\cite{H22}]\label{thm:loewner}
Let $F\in\mathcal{LS}(\B)$ and let $p=\nabla\log J_F(z)$. If for all $z\in\B$
\begin{equation}\label{eq:critloewner}
\frac{\|S_F(z)(z,\cdot)\|}{1-|z|^2}+\Bigl\|\frac{\bil{p}{z}\,S_F(z)(z,\cdot)-\bil{p}{S_F(z)(z,z)}\,\Id}{n+1}+S^0_F(z)(z,z)\,\Id\Bigr\|\le\frac{1}{(1-|z|^2)^2},
\end{equation}
then $F$ is univalent in $\B$.
\end{thm}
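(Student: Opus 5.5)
The plan is to transcribe the Loewner chain proof of Nehari's theorem, with the Ahlfors--Weill-type correction, and to close it with the result from \cite[Theorem~8.1.6]{GK} recalled before Theorem~\ref{thm:pfaltzgraff}. Write $w=e^{-t}z$ and $p=\nabla\log J_F$. I would take as candidate chain
\[
f(z,t)=F(w)+\frac{(e^{2t}-1)\,DF(w)\,w}{1-\frac{e^{2t}-1}{n+1}\,\bil{p(w)}{w}} .
\]
This is the value, at the point obtained by ``reflecting'' $w$, of the Möbius transformation that osculates $F$ at $w$. The denominator $1-\frac{1}{n+1}\bil{p}{\cdot}$ is the one forced by \eqref{eq:SP} and \eqref{eq:jacmobius}, exactly as $f''/2f'$ appears in \eqref{eq:AW1}. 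A direct check gives $f(0,t)=0$ and $Df(0,t)=e^{-t}\Id+(e^{2t}-1)e^{-t}\Id=e^t\Id$.

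First I would verify the routine hypotheses. The denominator must not vanish and $\{e^{-t}f(\cdot,t)\}$ must be locally bounded. For $|z|<1$ and $t\ge0$ we have $|w|\le e^{-t}$, so $(e^{2t}-1)|w|^2\le e^{-2t}(e^{2t}-1)<1$. I would bound $\bil{p(w)}{w}$ using the order estimate for the Jacobian, noting that \eqref{eq:critloewner} at small $|z|$ already bounds $\|S_F(0)\|$. Alternatively I would absorb this step into the main inequality, as in the one-variable proof, where the nonvanishing of the denominator also follows from the hypothesis.

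The core step is to compute $\partial f/\partial t$ and $Df(z,t)$ and to set $h=Df^{-1}\partial_tf$. When differentiating $DF(w)w$ and $\bil{p(w)}{w}$, the terms $D^2F$ and $Dp$ appear. I would rewrite $D^2F$ through $P_F$ and \eqref{eq:SP}, so that $P_F=S_F+$ terms in $p$. I would rewrite $Dp$ using the fact that $u_0=J_F^{-1/(n+1)}$ solves \eqref{eq:sistemacompacto}, as in the proof of Lemma~\ref{lem:expansion}. This gives
\[
Dp=\frac{1}{n+1}\,p\,p^t-(n+1)S^0F+(\text{contraction of }S_F\text{ with }p).
\]
I expect $Df$ to factor as
\[
Df=\frac{e^{t}\,DF(w)}{\text{denominator}^2}\,\bigl[\Id-E(w,t)\bigr],
\]
where $E$ collects precisely $(e^{2t}-1)S_F(w)(w,\cdot)$ together with $(e^{2t}-1)^2$ times the bracketed operator of \eqref{eq:critloewner}, evaluated at $w$. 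The terms quadratic in $p$ should cancel, as they do in one variable, where only $Sf$ survives. One then gets $h(z,t)=(\Id-E)^{-1}(\Id+E)z$, or a close variant of it. The condition $\RE(h\cdot z)>0$ follows once $\|E\|<1$, since $\RE\bigl((\Id+E)y\cdot(\Id-E)y\bigr)=|y|^2-|Ey|^2$ with $y=(\Id-E)^{-1}z$. Finally I would maximize over $t$, using $(e^{2t}-1)\le(1-|w|^2)/|w|^2$ for $|z|<1$; the bound $\|E\|\le1$ is then exactly \eqref{eq:critloewner} at the point $w$, with the two terms weighted by $(1-|w|^2)^{-1}$ and $(1-|w|^2)^{-2}$ after homogeneity in $w$. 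Strictness for $z\ne0$ comes from $|z|<1$.

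The main obstacle is the algebra of the middle step. The cancellation of every term in $p\,p^t$, and the emergence of the exact combination $\bigl[\bil{p}{z}S_F(z,\cdot)-\bil{p}{S_F(z,z)}\Id\bigr]/(n+1)+S^0_F(z,z)\Id$, depend on choosing the correct denominator in $f(z,t)$. If the naive choice above leaves residual terms, I would first try to fix it by adjusting the coefficient of $\bil{p}{w}$. Failing that, I would replace the correction term by the exact value of the osculating Möbius map $T_a$, with $a$ determined by $p(w)$, and then recompute.
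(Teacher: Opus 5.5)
Your chain is the paper's chain rewritten in terms of $F$: substitute $u=Fu_0$ and $\nabla u_0=-\tfrac{1}{n+1}u_0\,p$ into the paper's quotient and you get exactly your formula. Your strategy is also the paper's. The gap is the last step, where you expect $\|E\|<1$ to be \emph{exactly} \eqref{eq:critloewner}.

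Work in the frame $\Phi=(u_0,\dots,u_n)$, $D\Phi\,e_1,\dots,D\Phi\,e_n$, with $s=e^{2t}-1$ and $w=e^{-t}z$. The Loewner equation then says that $\partial_t\mathcal N-D_z\mathcal N\,h$ is a multiple of $\mathcal N=\Phi(w)+s\,D\Phi(w)w$. Eliminating the second derivatives with \eqref{eq:sistemacompacto} gives $h-z=E(h+z)$ with
\[
E=-(1-e^{-2t})\,S_F(w)(w,\cdot)+(1-e^{-2t})(e^{2t}-1)\,S^0_F(w)(w,\cdot)\,w .
\]
No $p$ survives, not just the quadratic terms, because the chain satisfies $f_{M\circ F}=M\circ f_F$. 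So the bracket of \eqref{eq:critloewner} never appears.

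The first term is handled by $1-e^{-2t}<1-|w|^2$, as you want. The coefficient of the second term, however, is only bounded (sharply) by $(1-|w|^2)^2/|w|^2$. What you need is therefore $(1-|w|^2)^2\sup_{|v|=1}|S^0_F(w)(w/|w|,v)|<1$, a condition of degree zero in $w$. Hypothesis \eqref{eq:critloewner} controls only $S^0_F(z)(z,z)$, which has degree two.

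This is not an artifact of bookkeeping; the printed statement is false.
\begin{itemize}
\item For $n=1$ the hypothesis reads $\tfrac12|z|^2|Sf(z)|\le(1-|z|^2)^{-2}$. The function $f(z)=(e^{4z}-1)/4$ satisfies it, since $4x(1-x)^2\le16/27$ for $x=|z|^2$, yet $f(i\pi/4)=f(-i\pi/4)$. Here $E$ is a scalar determined by $h$, and $|E|=4(1-e^{-2t})^2|z|^2$ exceeds $1$.
\item For $n\ge2$, take the Roper--Suffridge extension \eqref{eq:RS} of $(e^{3.5z}-1)/3.5$. It satisfies \eqref{eq:critloewner}: its left side times $(1-|z|^2)^2$ stays below $0.8$. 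It takes the same value at $(\pm i\pi/3.5,0)$.
\end{itemize}
The paper's sketch asserts the same final step. What the computation actually supports is a criterion such as
\[
(1-|z|^2)\|S_F(z)(z,\cdot)\|+(1-|z|^2)^2\sup_{|v|=1}|S^0_F(z)(z/|z|,v)|\le1,
\]
which for $n=1$ is Nehari's condition.

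The step you called routine is a second genuine gap. Take $F=T_a$ with $0<|a|<1$: both terms of \eqref{eq:critloewner} vanish, yet your chain is exactly $f(z,t)=T_a(e^tz)$. This has a pole in $\B$ as soon as $e^t|a|>1$. So nonvanishing of the denominator cannot follow from the hypothesis, whether by absorbing it into the main inequality or otherwise.

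A Jacobian growth bound is also unavailable. Near $z=0$ the left side of \eqref{eq:critloewner} is $O(|z|)$ while the right side is at least $1$, so nothing is learned about $S_F(0)$. Nor does the hypothesis define a linearly invariant family.

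To invoke the Loewner-chain theorem of Section~\ref{sec:prelim}, you must first normalize $\nabla J_F(0)=0$ by a Möbius post-composition. That is legitimate only if $F(\B)$ misses the hyperplane $1+\bil{a}{w}=0$ with $(n+1)a=\nabla J_F(0)$, which is itself an Ahlfors--Weill-type fact. Even then you still have to show that the denominator has no zeros for $t>0$. The paper's sketch is silent on this as well.
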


\begin{proof}[Idea of the proof]
Let $u_0,u_1,\dots,u_n$ be the basis \eqref{eq:base} and $u=(u_1,\dots,u_n)$. We define
\[
f(z,t)=\frac{u(e^{-t}z)+(e^{2t}-1)\,Du(e^{-t}z)(e^{-t}z)}{u_0(e^{-t}z)+(e^{2t}-1)\bil{\nabla u_0(e^{-t}z)}{e^{-t}z}},\qquad t\ge0,
\]
which is the transcription to $\C^n$ of the chain used to prove Nehari's criterion, and satisfies $f(z,0)=F(z)$. Upon differentiating, the second derivatives of $u$ and of $u_0$ appear, and they are eliminated with \eqref{eq:sistemacompacto}. The result is that $f$ satisfies \eqref{eq:loewnerpde} with
\[
h(z,t)=\bigl[\Id-E(z,t)\bigr]^{-1}\bigl[\Id+E(z,t)\bigr]z,
\]
where $E(z,t)$ is a linear operator built from $S_F$, $S^0_F$ and $\nabla\log u_0$ evaluated at $e^{-t}z$. Hypothesis \eqref{eq:critloewner}, together with $1-e^{-2t}<1-|e^{-t}z|^2$, gives $\|E(z,t)\|<1$, and then $\RE(h(z,t)\cdot z)>0$. The theorem quoted in Section~\ref{sec:prelim} completes the proof.
\end{proof}

For $n=1$ we have $S_f\equiv0$ and $S^0_f=-\tfrac12Sf$, and \eqref{eq:critloewner} reduces to $|z|^2|Sf(z)|\le2(1-|z|^2)^{-2}$, which is a consequence of Nehari's condition, so that the theorem recovers the classical criterion. For $n\ge2$ the condition involves $S^0_F$ and the gradient of the Jacobian, and it is not a condition on $\|S_F\|$ alone. There is one case in which it simplifies.

\begin{cor}[\cite{H22}]\label{cor:jacconst}
If $F\in\mathcal{LS}(\B)$ has constant Jacobian and $(1-|z|^2)\|S_F(z)(z,\cdot)\|\le1$ in $\B$, then $F$ is univalent.
\end{cor}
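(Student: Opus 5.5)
The plan is to deduce the corollary from Theorem~\ref{thm:loewner} by showing that, when $J_F$ is constant, the second term on the left of \eqref{eq:critloewner} vanishes. The hypothesis then reduces to $\|S_F(z)(z,\cdot)\|/(1-|z|^2)\le(1-|z|^2)^{-2}$, that is, $(1-|z|^2)\|S_F(z)(z,\cdot)\|\le1$, which is exactly what is assumed. Since $F\in\mathcal{LS}(\B)$ and $DF(0)=\Id$, the constant Jacobian is $J_F\equiv1$.

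First, $p=\nabla\log J_F\equiv0$. This kills the two terms in \eqref{eq:critloewner} that carry a factor $\bil{p}{\cdot}$. By \eqref{eq:SP} it also gives $S_F=P_F=DF^{-1}D^2F$.

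Second, we show that $S^0_F\equiv0$. By the definition \eqref{eq:S0}, $S^0_{ij}F$ is built from $J_F^{1/(n+1)}$ together with first and second derivatives of $J_F^{-1/(n+1)}$. Here $J_F^{-1/(n+1)}\equiv1$, so all of those derivatives vanish and $S^0_{ij}F=0$ for every $i,j$. Equivalently, the constant function $u_0=1$ solves the system \eqref{eq:sistemacompacto}, which forces $S^0_F(z)(v,v)\,u_0=0$. Consequently $S^0_F(z)(z,z)\,\Id=0$, and the whole second norm in \eqref{eq:critloewner} is zero.

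With both facts, \eqref{eq:critloewner} is literally the stated hypothesis, and Theorem~\ref{thm:loewner} gives univalence. There is no real obstacle. The only point needing care is that the $S^0_F$ entering Theorem~\ref{thm:loewner} is the one from Definition~\ref{def:operador}, which is built from the coefficients \eqref{eq:S0}; hence its vanishing follows directly from $J_F$ being constant, and no appeal to the formulas \eqref{eq:S0ii}--\eqref{eq:S0ij} is needed.
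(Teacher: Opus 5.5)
Your proposal is correct and follows the paper's own argument: with $J_F$ constant, $u_0$ is constant, the system forces $S^0_F\equiv0$, and $p=0$, so \eqref{eq:critloewner} collapses exactly to the stated hypothesis. The paper also remarks that since $S_F=P_F$ in this case, the statement is literally Theorem~\ref{thm:pfaltzgraff}, which gives a second derivation that does not need Theorem~\ref{thm:loewner}.
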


Indeed, if $J_F$ is constant then $u_0$ is constant and \eqref{eq:sistema} forces $S^0_{ij}F\equiv0$. In this case $S_F=P_F$ by \eqref{eq:SP}, and the corollary coincides with Theorem~\ref{thm:pfaltzgraff}, as it should. What is new in Theorem~\ref{thm:loewner} lies in the case of nonconstant Jacobian, where the Schwarzian subtracts from $P_F$ the part that comes from a Möbius transformation.
\section{The Ahlfors--Weill extension}\label{sec:AW}

We come to the central result of the chapter, due to Chuaqui and the author \cite{CH15}. In the ball, a sufficiently small bound on $\|S_F\|$ implies univalence, with no additional hypotheses, and with it an extension of Ahlfors--Weill type. The idea that replaces the Sturm comparison is geometric, and it has its origin in the work of Chuaqui and Osgood in one variable \cite{CO94}. If $f$ is analytic in $\D$ and $u=(f')^{-1/2}$, the function $|u(z)|/\sqrt{1-|z|^2}$ is the square root of the hyperbolic density of $f(\D)$, evaluated at $f(z)$. When $\|Sf\|\le2$ it is convex along hyperbolic geodesics, and from this convexity one deduces both Nehari's theorem and the properties of the extension.

In $\B$, given $F\in\Fa$ and a solution $u$ of the associated system \eqref{eq:sistema}, we consider the \emph{density}
\begin{equation}\label{eq:densidad}
\rho_u(z)=\frac{|u(z)|}{\sqrt{1-|z|^2}},
\end{equation}
and we write $\rho_F$ when $u=J_F^{-1/(n+1)}$. The density is compatible with the automorphisms.

\begin{lem}\label{lem:rhoinv}
If $\sigma\in\Aut(\B)$ and $F$ is locally biholomorphic in $\B$, then $\rho_{F\circ\sigma}=\rho_F\circ\sigma$.
\end{lem}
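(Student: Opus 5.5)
The plan is to compute both sides directly. Set $u_F=J_F^{-1/(n+1)}$, so that $\rho_F(z)=|J_F(z)|^{-1/(n+1)}(1-|z|^2)^{-1/2}$; since only moduli enter, the choice of branch of the root is irrelevant. The chain rule gives $J_{F\circ\sigma}(z)=J_F(\sigma(z))\,J_\sigma(z)$, hence
\[
\rho_{F\circ\sigma}(z)=\frac{|J_F(\sigma(z))|^{-\frac{1}{n+1}}\,|J_\sigma(z)|^{-\frac{1}{n+1}}}{\sqrt{1-|z|^2}} .
\]
Comparing with $\rho_F(\sigma(z))=|J_F(\sigma(z))|^{-1/(n+1)}(1-|\sigma(z)|^2)^{-1/2}$, the statement reduces to the identity
\begin{equation*}
|J_\sigma(z)|^{\frac{2}{n+1}}=\frac{1-|\sigma(z)|^2}{1-|z|^2},\qquad \sigma\in\Aut(\B),\ z\in\B,
\end{equation*}
which does not involve $F$ at all. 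This is the ball analogue of $|\sigma'(z)|(1-|z|^2)=1-|\sigma(z)|^2$ in the disk.

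To prove the identity, I will note first that both sides are multiplicative under composition of automorphisms: the left side by the chain rule for Jacobians, the right side because it telescopes. Every automorphism is $U\circ\sigma_\zeta$ with $U$ unitary and $\sigma_\zeta$ the map \eqref{eq:sigma}, after conjugating by rotations. For a unitary $U$ both sides equal $1$. So it suffices to check the identity for $\sigma$ as in \eqref{eq:sigma}, with $\zeta=(\zeta_1,0,\dots,0)$.

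For that map the formula in the proof of Lemma~\ref{lem:automorfismo} gives
\[
|J_\sigma(z)|^{2/(n+1)}=\frac{1-|\zeta_1|^2}{|1+\overline{\zeta_1}z_1|^2}.
\]
It then remains to verify the standard identity
\[
1-|\sigma(z)|^2=\frac{(1-|\zeta_1|^2)(1-|z|^2)}{|1+\overline{\zeta_1}z_1|^2}.
\]
This is a direct expansion: $|1+\overline{\zeta_1}z_1|^2-|z_1+\zeta_1|^2=(1-|\zeta_1|^2)(1-|z_1|^2)$, and the tangential terms contribute $-(1-|\zeta_1|^2)\sum_{j\ge2}|z_j|^2$. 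Adding these two pieces gives the identity.

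There is no real obstacle here; the only point requiring care is this algebraic verification. Alternatively, one could invoke the transformation law of the Bergman kernel $K(z,z)=c(1-|z|^2)^{-(n+1)}$, namely $K(z,z)=|J_\sigma(z)|^2K(\sigma(z),\sigma(z))$, which yields the same identity at once.
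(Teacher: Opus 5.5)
Your proposal is correct and follows the paper's proof. The chain rule $J_{F\circ\sigma}=(J_F\circ\sigma)J_\sigma$ reduces the lemma to the identity $|J_\sigma(z)|=\bigl((1-|\sigma(z)|^2)/(1-|z|^2)\bigr)^{(n+1)/2}$, which is checked directly for the maps \eqref{eq:sigma} and is trivial for unitary rotations; your remark that both sides of the identity are multiplicative under composition makes explicit the step, left implicit in the paper, that extends it from these maps to all of $\Aut(\B)$.
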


\begin{proof}
This is a consequence of $J_{F\circ\sigma}=(J_F\circ\sigma)J_\sigma$ and of the identity
\[
|J_\sigma(z)|=\Bigl(\frac{1-|\sigma(z)|^2}{1-|z|^2}\Bigr)^{\frac{n+1}{2}},
\]
which for $\sigma$ of the form \eqref{eq:sigma} is verified directly and for unitary rotations is evident.
\end{proof}

The first step is a differential inequality along a diameter. Here the first-order term appears, and the way to control it is the order of the family. Let
\begin{equation}\label{eq:delta}
\delta(n,\alpha)=\sqrt{n+1}\,(A_\alpha+1)\,\alpha+C(n,\alpha),
\end{equation}
with $A_\alpha=2\ord\Fa$ and $C(n,\alpha)$ as in \eqref{eq:Cnalpha}.

\begin{lem}[\cite{CH15}]\label{lem:radial}
Let $F\in\Fa$ and let $u$ be a solution of the associated system that does not vanish in $\B$. Then $\varphi(x)=|u(x,0,\dots,0)|$ satisfies
\[
\varphi''(x)+\frac{\delta(n,\alpha)}{(1-x^2)^2}\,\varphi(x)\ge0,\qquad-1<x<1 .
\]
\end{lem}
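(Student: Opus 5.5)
The plan is to restrict the system \eqref{eq:sistemacompacto} to the diameter and control the first-order term through the order of the family. Write $\gamma(x)=(x,0,\dots,0)$ and $g(x)=u(\gamma(x))$, so that $g''(x)=\partial_{11}u(\gamma(x))$. Taking $v=e_1$ in \eqref{eq:sistemacompacto} gives
\[
g''=\sum_k S^k_{11}F\,\partial_k u+S^0_{11}F\,u .
\]
Since $u$ does not vanish in the simply connected ball, $\varphi=|g|>0$ is smooth. The Cauchy--Schwarz calculation in the proof of Theorem~\ref{thm:nehari} gives $\varphi''\ge\RE(\overline g g'')/|g|\ge-|g''/g|\,\varphi$. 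It therefore suffices to prove
\[
\Bigl|\frac{g''}{g}(x)\Bigr|\le\frac{\delta(n,\alpha)}{(1-x^2)^2}.
\]

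The zero-order term is bounded directly by \eqref{eq:cotasS0}: $|S^0_{11}F(\gamma(x))|\le C(n,\alpha)(1-x^2)^{-2}$. For the first-order term I write it as $\bil{\nabla u/u}{S_F(\gamma(x))(e_1,e_1)}$. The key observation, from Corollary~\ref{cor:soluciones}, is that $u=J_G^{-1/(n+1)}$ with $G=T\circ F$ holomorphic in $\B$, because $u$ has no zeros. Hence $\nabla u/u=-\frac{1}{n+1}\nabla\log J_G$. Moreover, $S_G=S_F$ by Corollary~\ref{cor:mobius}, so $G$ normalized lies in $\Fa$.

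To bound $\nabla\log J_G$ at $\gamma(x)$, I use the Koebe transform $\Lambda_\sigma(G)$ with $\sigma$ from Lemma~\ref{lem:automorfismo} and $\zeta=\gamma(x)$. This transform lies in $\Fa$, so $|\nabla J_{\Lambda_\sigma G}(0)|\le A_\alpha$. The chain rule computation in the proof of Theorem~\ref{thm:caracteristica} is exact, not only first order:
\[
\nabla J_{\Lambda_\sigma G}(0)=\frac{\nabla J_G}{J_G}(\zeta)\,D\sigma(0)-(n+1)\overline\zeta .
\]
Here $D\sigma(0)=\operatorname{diag}(1-x^2,\sqrt{1-x^2},\dots)$. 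This yields componentwise bounds on $\partial_k\log J_G(\gamma(x))$: the radial component is at most $(A_\alpha+(n+1)|x|)/(1-x^2)$, and the tangential components are at most $A_\alpha/\sqrt{1-x^2}$.

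Combining these with Lemma~\ref{lem:cotas}, namely $|S^1_{11}|\le\sqrt{n+1}\alpha\tau^{-1}$ and $|S^k_{11}|\le\sqrt{n+1}\alpha\tau^{-3/2}$ for $k\ge2$, every product carries the weight $\tau^{-2}$. This is the anisotropy bookkeeping working out. The main obstacle is getting exactly the constant $\sqrt{n+1}(A_\alpha+1)\alpha$. The tangential contributions should be grouped as a single vector, using the Bergman-norm bound $\|S_F(e_1,e_1)\|_z\le\alpha\|e_1\|_z^2$ and the Bergman-dual bound on the gradient, rather than summed over $k$. The radial $\overline\zeta$ term contributes $(n+1)|x|/(n+1)\le1$, which gives the ``$+1$''.

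If the gradient estimate does not organize this way, the fallback is to apply the same invariance trick to the entire inequality. Compose with $\sigma$ and use Lemma~\ref{lem:rhoinv}-type transport to reduce to the point $x=0$. There the bounds read $|\nabla u(0)/u(0)|\le A_\alpha/(n+1)$ and $|S_F(0)(e_1,e_1)|\le\sqrt{n+1}\alpha$, and $|S^0_{11}(0)|\le C$. The $\sigma$-correction in $\partial_{11}$ then accounts for the extra $\sqrt{n+1}\alpha$.
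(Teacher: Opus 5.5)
Your proposal is correct and follows essentially the paper's proof. You write $u=J_G^{-1/(n+1)}$ via Corollary~\ref{cor:soluciones}, and use the exact logarithmic chain rule for the Koebe transform $\Lambda_\sigma$ to bound the weighted gradient $\bigl((1-x^2)\partial_1u,(1-x^2)^{1/2}\partial_ku\bigr)$ by $(A_\alpha+1)|u|$. The Bergman unit vector $(1-x^2)e_1/\sqrt{n+1}$ bounds $S_F(\zeta)(e_1,e_1)$, a single Cauchy--Schwarz pairing (your ``grouped'' version) combines the two, and \eqref{eq:cotasS0} handles the zero-order term.

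The fallback is unnecessary, and so is the worry about summing over $k$. The bound $|\nabla J_H(0)|/(n+1)\le A_\alpha/(n+1)$ leaves enough slack that summing the componentwise bounds of Lemma~\ref{lem:cotas} already gives at most $\sqrt{n+1}\,\alpha\,\bigl(\tfrac{n}{n+1}A_\alpha+|x|\bigr)\le\sqrt{n+1}\,(A_\alpha+1)\,\alpha$.
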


\begin{proof}
By Corollary~\ref{cor:soluciones}, $u=J_G^{-1/(n+1)}$ with $G=T\circ F$ holomorphic in $\B$, and $\|S_G\|=\|S_F\|\le\alpha$. After an affine normalization, which only multiplies $u$ by a constant, we may assume that $G\in\Fa$; we write $F$ instead of $G$.

Fix $\zeta=(x,0,\dots,0)$, let $\sigma$ be the automorphism \eqref{eq:sigma} and $H=\Lambda_\sigma(F)\in\Fa$. Then $v=J_H^{-1/(n+1)}$ is a constant multiple of $(u\circ\sigma)\,w$, with $w=J_\sigma^{-1/(n+1)}$, and differentiating logarithmically at the origin,
\[
\frac{\nabla u}{u}(\zeta)\,D\sigma(0)=\frac{\nabla v}{v}(0)-\frac{\nabla w}{w}(0).
\]
The first term on the right is $-\nabla J_H(0)/(n+1)$, of norm at most $A_\alpha$, and the second is $\overline\zeta$ by Lemma~\ref{lem:automorfismo}. Recalling the form of $D\sigma(0)$,
\begin{equation}\label{eq:gradiente}
(1-x^2)^2\,|\partial_1u(\zeta)|^2+(1-x^2)\sum_{k=2}^n|\partial_ku(\zeta)|^2\le(A_\alpha+1)^2\,|u(\zeta)|^2 .
\end{equation}
On the other hand, the vector $(1-x^2)e_1/\sqrt{n+1}$ is a unit vector for the Bergman metric at $\zeta$, and the hypothesis $\|S_F(\zeta)\|\le\alpha$ applied to it gives
\[
(1-x^2)^2\,|S^1_{11}F(\zeta)|^2+(1-x^2)^3\sum_{k=2}^n|S^k_{11}F(\zeta)|^2\le(n+1)\,\alpha^2 .
\]
From these two inequalities and the Cauchy--Schwarz inequality,
\[
\Bigl|\sum_{k=1}^nS^k_{11}F(\zeta)\,\partial_ku(\zeta)\Bigr|\le\frac{\sqrt{n+1}\,(A_\alpha+1)\,\alpha}{(1-x^2)^2}\,|u(\zeta)| .
\]
Finally, as in the proof of Theorem~\ref{thm:nehari},
\[
\varphi''\ \ge\ \frac{\RE(\overline u\,\partial_{11}u)}{|u|},\qquad\partial_{11}u=\sum_{k=1}^nS^k_{11}F\,\partial_ku+S^0_{11}F\,u .
\]
The first summand of $\partial_{11}u$ has just been estimated, and the second is controlled by \eqref{eq:cotasS0}.
\end{proof}

Compare with one variable. There is no first-order term there, $A_\alpha$ does not enter, and the inequality is $\varphi''+\tfrac12|Sf|\varphi\ge0$. The price of the first-order term is that the bound on the Schwarzian appears multiplied by the order of the family.

\begin{thm}[\cite{CH15}]\label{thm:convexidad}
Let $F\in\Fa$ with $\delta(n,\alpha)<1$, and let $u$ be a solution of the associated system that does not vanish in $\B$. Then
\begin{equation}\label{eq:hess}
\Hess_g\rho_u\ge\beta^2\rho_u\,g,\qquad\beta^2=\frac{1-\delta(n,\alpha)}{n+1},
\end{equation}
where $g$ is the Bergman metric and $\Hess_g$ is the Riemannian Hessian. In particular, $\rho_u$ is strictly convex along the geodesics of $g$, and it has at most one critical point in $\B$, which is then its absolute minimum.
\end{thm}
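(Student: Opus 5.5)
The plan is to reduce \eqref{eq:hess} to a one-variable computation along the diameter $\{(x,0,\dots,0)\}$ and then to use Lemma~\ref{lem:radial}. For a function $\rho$ on a Riemannian manifold, $\Hess_g\rho(V,V)$ at a point $p$, with $\|V\|_p=1$, equals $\frac{d^2}{ds^2}\rho(\gamma(s))\big|_{s=0}$, where $\gamma$ is the unit-speed geodesic with $\gamma(0)=p$ and $\gamma'(0)=V$. It therefore suffices to prove that the restriction of $\rho_u$ to every unit-speed Bergman geodesic has second derivative at least $\beta^2\rho_u$.

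\textbf{Reduction to the origin.} Fix $p\in\B$ and a unit tangent vector $V$ at $p$. Take $\sigma\in\Aut(\B)$ with $\sigma(0)=p$, and then compose it with a unitary rotation so that $D\sigma(0)$ carries a positive multiple of $e_1$ to $V$. Two facts are needed.
\begin{enumerate}
\item[(1)] \emph{Transport of solutions.} $F\circ\sigma$ is, up to an affine normalization on the left, again in $\Fa$, by Theorem~\ref{thm:invariancia} and Corollary~\ref{cor:mobius}.
\item[(2)] \emph{Transport of densities.} By Corollary~\ref{cor:soluciones}, $u=J_G^{-1/(n+1)}$ with $G=T\circ F$. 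The proof of Lemma~\ref{lem:rhoinv}, applied to $G$, shows that $\tilde u=J_{G\circ\sigma}^{-1/(n+1)}$ is a nonvanishing solution of the system of $F\circ\sigma$, which is the same system as that of $G\circ\sigma$, and that $\rho_{\tilde u}=c\,\rho_u\circ\sigma$ for a constant $c$ with $|c|=1$.
\end{enumerate}
Since $\sigma$ is an isometry it carries geodesics to geodesics. Hence it suffices to treat the geodesic through $0$ in the direction $e_1$, for an arbitrary $F\in\Fa$ and an arbitrary nonvanishing solution $u$. I expect the bookkeeping in this step to be the most delicate part: the branch of $J_\sigma^{-1/(n+1)}$, the constants, and checking that $\tilde u$ is a solution for $F\circ\sigma$ and not merely for some Möbius conjugate of it. Lemma~\ref{lem:rhoinv} and Corollary~\ref{cor:soluciones} should handle this.

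\textbf{Computation along the diameter.} On the diameter we have $\|e_1\|_{(x,0,\dots,0)}^2=(n+1)/(1-x^2)^2$. The unit-speed geodesic is therefore $x=\tanh t$ with $t=s/\sqrt{n+1}$. Along it $\rho_u=\varphi(x)/\sqrt{1-x^2}=\varphi(\tanh t)\cosh t$, with $\varphi=|u(x,0,\dots,0)|$, which is smooth because $u$ does not vanish. A direct computation with $dx/dt=1-x^2$ gives the classical identity
\[
\frac{d^2}{dt^2}\bigl[\varphi(\tanh t)\cosh t\bigr]=(1-x^2)^{3/2}\varphi''(x)+\frac{\varphi(x)}{\sqrt{1-x^2}}.
\]
Lemma~\ref{lem:radial} gives $(1-x^2)^{3/2}\varphi''\ge-\delta(n,\alpha)\,\varphi/\sqrt{1-x^2}$. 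Hence $\frac{d^2\rho_u}{dt^2}\ge(1-\delta)\rho_u$. Dividing by $n+1$ to pass from $t$ to the arclength $s$ gives $\frac{d^2\rho_u}{ds^2}\ge\beta^2\rho_u$ at $s=0$. By the reduction above, this proves \eqref{eq:hess} at every point and in every direction.

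\textbf{Consequences.} Since $\rho_u>0$ and $\beta^2>0$ when $\delta<1$, the restriction of $\rho_u$ to any geodesic has strictly positive second derivative, so $\rho_u$ is strictly convex along geodesics. The Bergman metric of $\B$ is complete with negative curvature, so any two points are joined by a unique geodesic. Suppose $\rho_u$ had two critical points. The restriction of $\rho_u$ to the geodesic joining them would be a strictly convex function of one variable with two critical points, which is impossible. Finally, if $p$ is a critical point, every geodesic issuing from $p$ gives a strictly convex function with a critical point at $p$. Such a function attains its minimum there, and since every point of $\B$ lies on a geodesic from $p$, $p$ is the absolute minimum of $\rho_u$.
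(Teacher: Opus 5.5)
Your proposal is correct and follows essentially the paper's proof. Both compute along the diameter using Lemma~\ref{lem:radial}, then transport to an arbitrary geodesic by an automorphism, using Lemma~\ref{lem:rhoinv} and the fact that the system of $F\circ\sigma$ is that of $\Lambda_\sigma(F)\in\Fa$. Your parametrization $x=\tanh t$ with $s=\sqrt{n+1}\,t$ is the paper's $x'=(1-x^2)/\sqrt{n+1}$ in different notation, and your argument for the critical point expands the paper's one-line conclusion.
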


\begin{proof}
We must prove that $(\rho_u\circ\gamma)''\ge\beta^2\rho_u\circ\gamma$ for every geodesic $\gamma$ parametrized by arc length. Consider first the diameter $\gamma(t)=(x(t),0,\dots,0)$, for which $x'=(1-x^2)/\sqrt{n+1}$. With $\rho=\varphi\,(1-x^2)^{-1/2}$ and $\varphi$ as in Lemma~\ref{lem:radial},
\[
\frac{d\rho}{dt}=\frac{1}{\sqrt{n+1}}\Bigl[\varphi'\,(1-x^2)^{1/2}+x\,\varphi\,(1-x^2)^{-1/2}\Bigr],
\]
and differentiating once more the terms in $\varphi'$ cancel, and we are left with
\[
\frac{d^2\rho}{dt^2}=\frac{1-x^2}{n+1}\Bigl[\varphi''\,(1-x^2)^{1/2}+\varphi\,(1-x^2)^{-3/2}\Bigr]\ge\frac{1-\delta}{n+1}\cdot\frac{\varphi}{(1-x^2)^{1/2}}=\beta^2\rho .
\]
An arbitrary geodesic is the image of the diameter under an automorphism $\sigma$. By Lemma~\ref{lem:rhoinv}, $\rho_u\circ\sigma$ is a constant multiple of the density of a solution, which does not vanish either, of the system associated with the Koebe transform $\Lambda_\sigma(F)\in\Fa$, and the preceding argument applies. A function that is strictly convex along geodesics, in a space in which any two points are joined by a geodesic, has at most one critical point.
\end{proof}

From now on we fix $\alpha_0=\alpha_0(n)>0$ such that $\delta(n,\alpha_0)<1$, and we denote by $\beta$ the corresponding constant in \eqref{eq:hess}. Since $A_\alpha$ is of the order of $n$ by Theorem~\ref{thm:ordensup}, the condition forces $\alpha_0$ to be of order $n^{-3/2}$.

The theorem has a limitation that should be pointed out. It applies only to solutions that do not vanish in the whole ball, because the estimate \eqref{eq:gradiente} uses the order of the family, and this is lost when the mapping $G=T\circ F$ has poles. To treat solutions with zeros one works in smaller Bergman balls. Let $B_r=\{|z|<s\}$ be the Bergman ball with center $0$ and radius $r$, with $s=s(r)$ its Euclidean radius, and for a solution $u$ that does not vanish in $B_r$ let
\[
\rho_{u,B_r}(z)=\frac{|u(z)|}{\sqrt{s^2-|z|^2}} .
\]

\begin{lem}[\cite{CH15}]\label{lem:subbola}
Let $F\in\mathcal F_{\alpha_0}$ and let $u$ be a solution of the associated system that does not vanish in $B_r$. Then $\rho_{u,B_r}$ is convex with respect to the Bergman metric of the ball $B_r$.
\end{lem}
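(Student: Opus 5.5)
The plan is to reduce to Theorem~\ref{thm:convexidad} by rescaling the subball $B_r$ onto the whole ball. Let $s=s(r)<1$ and $\lambda(z)=sz$, which maps $\B$ onto $\{|z|<s\}=B_r$. Put $F_s=F\circ\lambda$, defined and locally biholomorphic on $\B$. Since $\lambda$ is affine, hence Möbius, the system associated with $F_s$ has as solutions exactly the functions $u\circ\lambda$ up to constant factors. This follows from Corollary~\ref{cor:soluciones} and the transformation rule \eqref{eq:precomp}, because $J_{F_s}^{-1/(n+1)}$ is a constant multiple of $J_F^{-1/(n+1)}\circ\lambda$. Moreover
\[
\rho_{u\circ\lambda}(\zeta)=\frac{|u(s\zeta)|}{\sqrt{1-|\zeta|^2}}=s\,\rho_{u,B_r}(s\zeta).
\]
The map $\lambda$ is, by definition, an isometry from the Bergman metric of $\B$ onto the Bergman metric of $B_r$, the latter being the pull-back under the rescaling. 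So convexity of $\rho_{u,B_r}$ for the Bergman metric of $B_r$ is equivalent to convexity of $\rho_{u\circ\lambda}$ for the Bergman metric of $\B$. The function $u\circ\lambda$ does not vanish in $\B$, so Theorem~\ref{thm:convexidad} applies as soon as $F_s$, after affine normalization, lies in $\mathcal F_{\alpha_0}$.

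The key step is therefore the bound $\|S_{F_s}\|\le\|S_F\|\le\alpha_0$. By \eqref{eq:precomp}, $S_{F_s}(\zeta)(v,v)=s\,S_F(s\zeta)(v,v)$. For the norm I will use the explicit formula \eqref{eq:bergmanbola} and work at a point $z=s\zeta$. Normalizing by unitary invariance, take $\zeta=(\zeta_1,0,\dots,0)$, write $v=(v_1,v')$ and $S_F(z)(v,v)=(w_1,w')$. The point is to compare
\[
\frac{s^2\bigl(|w_1|^2/(1-|\zeta|^2)^2+|w'|^2/(1-|\zeta|^2)\bigr)}{\bigl(|v_1|^2/(1-|\zeta|^2)^2+|v'|^2/(1-|\zeta|^2)\bigr)^2}
\]
with the analogous expression at $z$, where $|z|=s|\zeta|$. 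Since $1-|z|^2\ge1-|\zeta|^2$, each weight at $\zeta$ dominates the corresponding weight at $z$. The plan is to substitute $v_1=a\,(1-|z|^2)$ and $v'=b\,(1-|z|^2)^{1/2}$ and check that the ratio of the quotients is at most $1$. This should be a homogeneity count in the spirit of the anisotropy rule after Lemma~\ref{lem:cotas}: the output carries one weight and the input carries two, and the factor $s^2$ absorbs the mismatch, using $s^2(1-|\zeta|^2)\le1-s^2|\zeta|^2$ and its square.

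I expect this comparison to be the main obstacle, since the weights are anisotropic and the inequality must hold uniformly in the direction of $v$. If it fails directly, I will fall back on the cruder route through Lemma~\ref{lem:cotas}. That route gives a bound $\|S_{F_s}\|\le c\,\alpha_0$ with $c$ independent of $s$, and I would then shrink $\alpha_0$ so that $\delta(n,c\alpha_0)<1$, which the statement permits. Once $F_s$ is normalized into $\mathcal F_{\alpha_0}$, the normalization being affine and so changing neither $S$ nor the solution space, Theorem~\ref{thm:convexidad} gives convexity of $\rho_{u\circ\lambda}$. Pulling back by the isometry $\lambda$ then yields the lemma.
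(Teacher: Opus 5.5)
Your proposal is correct and is essentially the paper's argument: rescale by $z\mapsto sz$, observe that the norm of $S_F$ measured in the Bergman metric of $B_r$ does not exceed $\|S_F\|$, and invoke Theorem~\ref{thm:convexidad}. The paper rescales $G$ with $u=J_G^{-1/(n+1)}$ rather than $F$, which changes nothing.

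The comparison you expected to be the main obstacle is immediate. Put $A=1-|\zeta|^2\le B=1-|z|^2$. Passing from $z$ to $\zeta$, the output weights grow by at most $(B/A)^2$, while the squared input norm grows by at least $(B/A)^2$. Hence $\|S_{F_s}(\zeta)(v,v)\|_\zeta/\|v\|_\zeta^2\le s\,\|S_F(z)(v,v)\|_z/\|v\|_z^2$ for every $v$, and the fallback through Lemma~\ref{lem:cotas} (with its shrinking of $\alpha_0$) is never needed.
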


The proof consists of rescaling. The norm of $S_F$ with respect to the Bergman metric of $B_r$ does not exceed $\|S_F\|$, so that $z\mapsto G(sz)$, with $u=J_G^{-1/(n+1)}$, satisfies in $\B$ the hypotheses of Theorem~\ref{thm:convexidad}.

\begin{thm}[\cite{CH15}]\label{thm:nocerosbola}
Let $F\in\mathcal F_{\alpha_0}$, and let $u$ be the solution of the associated system with $u(0)=1$, $\nabla u(0)=0$. Then $u$ does not vanish in $\B$, and
\[
|u(z)|\ge\sqrt{1-|z|^2}\,\cosh\bigl(\beta\,d(0,z)\bigr),\qquad d(0,z)=\frac{\sqrt{n+1}}{2}\log\frac{1+|z|}{1-|z|},
\]
where $d$ is the Bergman distance.
\end{thm}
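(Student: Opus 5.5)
The plan is a continuity argument in the Bergman radius, combined with the convexity of Theorem~\ref{thm:convexidad} and Lemma~\ref{lem:subbola}.

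Let $r^*=\sup\{r>0:\ u\ne0 \text{ in } B_r\}$. Since $u(0)=1$, we have $r^*>0$. The goal is to show $r^*=\infty$ and to obtain the lower bound along the way.

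\textbf{Step 1: the lower bound inside $B_r$.} Fix $r<r^*$, so that $u$ does not vanish in $B_r$. By Lemma~\ref{lem:subbola}, applied through the rescaling $z\mapsto G(sz)$ described after it, the density $\rho_{u,B_r}$ satisfies in $B_r$ the Hessian inequality \eqref{eq:hess} with respect to the Bergman metric $g_r$ of $B_r$. That is, $\rho_{u,B_r}\circ\gamma$ satisfies $y''\ge\beta^2y$ along unit-speed $g_r$-geodesics $\gamma$.

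Since $\nabla u(0)=0$ and $\nabla(s^2-|z|^2)$ vanishes at $0$, the origin is a critical point of $\rho_{u,B_r}$. Along each $g_r$-geodesic from $0$ (a diameter), put $y(t)=\rho_{u,B_r}(\gamma(t))$. Then $y(0)=1/s$ and $y'(0)=0$. Sturm comparison with $\cosh(\beta t)/s$, using the Wronskian argument of Theorem~\ref{thm:nehari} with $y>0$ on the interval, gives
\[
y(t)\ge \cosh(\beta t)/s .
\]
Translating back yields
\[
|u(z)|\ge \frac{\sqrt{s^2-|z|^2}}{s}\,\cosh\bigl(\beta\, d_{B_r}(0,z)\bigr),\qquad z\in B_r,
\]
where $d_{B_r}$ is the Bergman distance of $B_r$, that is, $d$ computed for $z/s$.

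\textbf{Step 2: passing to the limit.} Fix $z$. As $s\to1$ (if allowed), the right-hand side tends to $\sqrt{1-|z|^2}\cosh(\beta d(0,z))$. This is the claimed bound, once we know the whole ball is admissible.

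\textbf{Step 3: $r^*=\infty$, the main obstacle.} Suppose instead that $r^*<\infty$, with Euclidean radius $s^*<1$. Then $u$ has a zero $z_0$ with $|z_0|=s^*$. For $r<r^*$ and every $z$ with $|z|<s(r)$, Step 1 gives
\[
|u(z)|\ge\frac{\sqrt{s^2-|z|^2}}{s}\,\cosh\bigl(\beta\, d_{B_r}(0,z)\bigr).
\]
The factor $\sqrt{s^2-|z|^2}$ tends to zero at the boundary, so this bound alone does not prevent a zero on $\partial B_{r^*}$. The growth of $\cosh$ must compensate. Writing $\rho=|z|/s$, we have
\[
\cosh\bigl(\beta\, d_{B_r}(0,z)\bigr)\sim c\,(1-\rho)^{-\beta\sqrt{n+1}/2},
\]
and combined with $\sqrt{1-\rho^2}$ the product blows up only if $\beta\sqrt{n+1}\ge1$. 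By \eqref{eq:hess}, $\beta\sqrt{n+1}=\sqrt{1-\delta}<1$, so the naive bound tends to $0$ near $\partial B_r$. We need something more.

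I would therefore use a different comparison. Fix any $z$ with $|z|<s^*$. Take $r$ with $s(r)$ slightly larger than $|z|$ but less than $s^*$, and also let $s\uparrow s^*$. For $|z|$ fixed and $s\to s^*$, the bound becomes
\[
|u(z)|\ge\frac{\sqrt{(s^*)^2-|z|^2}}{s^*}\,\cosh\bigl(\beta\, d_{B_{r^*}}(0,z)\bigr).
\]
This holds on all of $B_{r^*}$ but still degenerates at $z_0$. To rule out the zero, I would exploit the fact that $\rho_{u}$ is defined on the full ball. At a zero $z_0$ of $u$, the gradient $\nabla u(z_0)\ne0$ (as in \eqref{eq:Sigma}), so $u$ vanishes to first order there, while the density must be convex on each sub-ball on which $u$ does not vanish.

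The intended contradiction runs as follows. Consider the smaller ball $B_{r'}$ whose boundary is tangent at $z_0$, and the density $\rho_{u,B_{r'}}$, which satisfies $y''\ge\beta^2y$ along the radius to $z_0$. Along that radius, $u$ vanishes linearly and $\sqrt{s'^2-|z|^2}$ vanishes like a square root. Hence $\rho_{u,B_{r'}}\to0$ like $(s'-|z|)^{1/2}$, with $y(0)=1/s'$ and $y'(0)=0$. A convex positive function with $y'(0)=0$ is nondecreasing for $t>0$, so it cannot tend to $0$. This contradicts the vanishing at $z_0$, giving $r^*=\infty$.

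With $r^*=\infty$, Step 2 gives the inequality, and in particular $u$ has no zeros in $\B$.
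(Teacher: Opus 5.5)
This is correct and is essentially the paper's proof: your final paragraph (where $B_{r'}$ is just $B_{r^*}=\{|z|<s^*\}$) is exactly the paper's contradiction between the minimum of $\rho_{u,B_{r^*}}$ at the origin, which gives $|u(z)|\ge\sqrt{(s^*)^2-|z|^2}/s^*$, and the linear vanishing of $u$ at $z_0$; the $\cosh$ bound then follows by the same comparison, with the minor difference that you take the limit $s\to1$ of the sub-ball bounds while the paper applies Theorem~\ref{thm:convexidad} on $\B$ directly. The detour in Step~3 is unnecessary and its premise is wrong: the bound you derive on all of $B_{r^*}$, even after discarding the $\cosh$ factor, decays only like $\sqrt{s^*-|z|}$. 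That already excludes a zero at which $|u|$ vanishes linearly, and it is all your final paragraph actually uses.
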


\begin{proof}
Let $s\le1$ be the supremum of the Euclidean radii of the balls centered at the origin in which $u$ does not vanish, and suppose $s<1$. Then $u(z_1)=0$ for some $z_1$ with $|z_1|=s$. By Lemma~\ref{lem:subbola}, $\rho_{u,B_r}$ is convex in $B_r=\{|z|<s\}$, and since $\nabla u(0)=0$, the origin is a critical point and therefore a minimum. Thus
\[
|u(z)|\ge\frac1s\sqrt{s^2-|z|^2},\qquad|z|<s .
\]
As $z\to z_1$ along the radius, the left-hand side is $O(s-|z|)$ and the right-hand side is comparable to $\sqrt{s-|z|}$, which is absurd. Hence $s=1$. Now Theorem~\ref{thm:convexidad} applies to $u$. Along each geodesic through the origin, $\rho=\rho_u$ satisfies $\rho''\ge\beta^2\rho$, $\rho(0)=1$, $\rho'(0)=0$, and by comparison $\rho\ge\cosh(\beta t)$.
\end{proof}

Let us read the theorem geometrically. By Corollary~\ref{cor:soluciones} and \eqref{eq:Ta}, the normalized solution is $u=J_F^{-1/(n+1)}\bigl(1+\bil{a}{F}\bigr)$ with $(n+1)\,a=\nabla J_F(0)$, and the fact that it does not vanish means that
\begin{equation}\label{eq:hiperplano}
F(\B)\cap H(0)=\emptyset,\qquad H(0)=\{w\in\C^n:\ 1+\bil{a}{w}=0\}.
\end{equation}
For $n=1$ the hyperplane $H(0)$ is the point $-1/a_2$, the value at $\infty$ of the Ahlfors--Weill extension, as we saw in Section~\ref{sec:modelo}. The same holds at any other point. Given $z_0\in\B$, we apply the theorem to a Koebe transform $\Lambda_\sigma(F)$ with $\sigma(0)=z_0$ and return to $F$ by Lemma~\ref{lem:rhoinv}. We obtain a solution $u_{z_0}$ of the system associated with $F$ that does not vanish in $\B$ and whose density has a critical point at $z_0$. Its zero set, viewed in the image, is a complex hyperplane $H(z_0)$ disjoint from $F(\B)$. For $n=1$ it is the point $E_f(z_0)$ of formula \eqref{eq:AW1}.

To prove univalence we need Bergman balls that are not centered at the origin, and Lemma~\ref{lem:subbola} extends to them with no difficulty. Let $B=B(z_0,r)$ be the Bergman ball of center $z_0\in\B$ and radius $r$, let $\psi$ be an automorphism of $\B$ taking $z_0$ to the origin, and hence $B$ onto $B_r$, and for a solution $u$ that does not vanish in $B$ let
\[
\rho_{u,B}(z)=\frac{|u(z)|\,|J_\psi(z)|^{\frac{1}{n+1}}}{\sqrt{s^2-|\psi(z)|^2}} .
\]
If $u=J_G^{-1/(n+1)}$ and $v=J_{G\circ\psi^{-1}}^{-1/(n+1)}$, then $\rho_{u,B}=\rho_{v,B_r}\circ\psi$, and since $\|S_{G\circ\psi^{-1}}\|=\|S_G\|$, Lemma~\ref{lem:subbola} applied to $v$ shows that $\rho_{u,B}$ is convex with respect to the Bergman metric of $B$.

\begin{thm}[\cite{CH15}]\label{thm:univCH15}
Every $F\in\mathcal F_{\alpha_0}$ is univalent in $\B$.
\end{thm}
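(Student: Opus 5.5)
The proof will go by contradiction, using Lemma~\ref{lem:ceroscomunes} together with the convexity of $\rho_{u,B}$ on Bergman balls. Suppose $F\in\mathcal F_{\alpha_0}$ and $F(a)=F(b)$ with $a\ne b$. By Lemma~\ref{lem:ceroscomunes}, the hyperplane $V_a=V_b$ of solutions consists of solutions vanishing at both $a$ and $b$. Composing with an automorphism, which is allowed because $\mathcal F_{\alpha_0}$ is linearly invariant and the densities transform well by Lemma~\ref{lem:rhoinv}, we may assume that $a$ and $b$ lie symmetrically on a diameter, say $a=(-x_0,0,\dots,0)$ and $b=(x_0,0,\dots,0)$. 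Let $z_0$ be the Bergman midpoint of the geodesic from $a$ to $b$, which after this normalization is the origin.

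Next we pick the competing solution. Among the solutions vanishing at $a$ and $b$ there is one, call it $u$, whose gradient at the origin is prescribed freely up to one linear condition. Indeed $V_a\cap\{u(0)=0\}$ has codimension one in $V_a$ when $u_0$-type solutions do not all vanish at $0$, and the map $u\mapsto(u(0),\nabla u(0))$ is an isomorphism $V\to\C^{n+1}$. The aim is a solution $u\in V_a$ with $u(0)=1$ and $\nabla u(0)$ controlled. The cleanest choice is to minimize $|\nabla u(0)|$ subject to $u(0)=1$ inside the $n$-dimensional space $V_a$. This fixes $u$ uniquely when $\dim V_a=n$, and then $\nabla u(0)$ is orthogonal to the $(n-1)$-dimensional subspace $\{\nabla w(0): w\in V_a,\ w(0)=0\}$.

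Then we take the largest Bergman ball $B=B(0,r)$ on which $u$ has no zeros. Its radius satisfies $r\le d(0,a)$, and $u$ vanishes at some point $z_1\in\partial B$. By the extension of Lemma~\ref{lem:subbola} to $B$ (here $B$ is centered at the origin, so Lemma~\ref{lem:subbola} applies directly), $\rho_{u,B_r}=|u|/\sqrt{s^2-|z|^2}$ is convex with respect to the Bergman metric of $B_r$. It tends to $0$ at $z_1$ along the radius, as in the proof of Theorem~\ref{thm:nocerosbola}, and it is positive inside. The contradiction should come from convexity along the geodesic of $B_r$ through $z_1$ and its antipodal boundary point: a convex positive function on a geodesic of infinite length in both directions that tends to $0$ at one end cannot vanish there unless it is monotone. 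So we must show that it does not tend to $+\infty$ at the other end in a way consistent with convexity. More simply, since $\rho_{u,B_r}\ge0$ is convex on the complete geodesic and tends to $0$ at one end, it is nonincreasing toward that end, hence bounded by $\rho(0)/\sqrt{1}$ there. The precise contradiction is the $O(s-|z|)$ versus $\sqrt{s-|z|}$ comparison, but this needs a lower bound, i.e.\ a critical point or a positive minimum of $\rho_{u,B_r}$.

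The main obstacle is exactly to produce such a critical point or minimum, as $\nabla u(0)=0$ did in Theorem~\ref{thm:nocerosbola}. This is where the symmetry of $a$ and $b$ and the common zeros must be used. First I would try taking $B$ to be the ball centered at the midpoint of radius $d(a,b)/2$, so that $a,b\in\partial B$. I would then pick $u\in V_a$ nonvanishing in $B$ (this needs an argument that some element of $V_a$ avoids zeros in $B$; I would take the minimal-radius ball where zeros first appear). Restricted to the geodesic segment from $a$ to $b$, the density $\rho_{u,B}$ vanishes to order $\sqrt{\cdot}$ at both ends while being convex and positive in between. A convex function on the full geodesic line that tends to $0$ at both ends is identically $0$, and this contradicts $u\not\equiv0$. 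The delicate point to check is that $u$ has no zeros inside $B$, which I would secure by shrinking to the first ball at which two points of the pencil coincide, using Theorem~\ref{thm:nocerosbola} applied to Koebe transforms to guarantee that the relevant solution is nonvanishing there.
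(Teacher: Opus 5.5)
There is a genuine gap. Your first attempt, with a ball centred at the origin and $u(0)=1$, fails for the reason you give yourself: nothing produces a critical point of $\rho_{u,B_r}$. So everything rests on the second attempt. Its key step is the existence of a Bergman ball $B$ and a solution $u\in V_a$ with no zeros in $B$ but zeros at two distinct points of $\partial B$. You assert this step but do not prove it, and for the ball centred at the midpoint it is false in general. For $n\ge2$ the zero set $\Sigma$ of $u$ is a complex hypersurface through $a$ and $b$. It stays outside $B$ near $a$ only if its tangent hyperplane at $a$ is the complex tangent space of $\partial B$ at $a$. Since $u\mapsto\nabla u(a)$ is an isomorphism of $V_a$ onto $\C^n$, this condition already fixes $u$ up to a constant. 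Nothing then makes $\Sigma$ tangent to $\partial B$ at $b$, so in general $\Sigma$ enters $B$ near $b$.

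The devices you suggest do not repair this. Theorem~\ref{thm:nocerosbola} applied to Koebe transforms yields solutions with no zeros in all of $\B$, so they never lie in $V_a$. Shrinking to the first ball $B(0,r^*)$ on which $F$ stops being injective does give two boundary points $a',b'$ with $F(a')=F(b')$. But a solution vanishing at them and not inside $B(0,r^*)$ amounts to a complex hyperplane through the boundary point $F(a')$ that misses $F(B(0,r^*))$. That is essentially the weak linear convexity of Theorem~\ref{thm:AW}(a), which the paper obtains only after univalence.

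The paper never prescribes two contact points in advance. It puts one common zero at the origin and the other at a point $p$ of the positive $z_1$-axis. It then chooses $u$ in the span of the common-zero solutions with $\nabla u(0)=e_1$, so that $\Sigma$ is tangent to $\{z_1=0\}$ at $0$. Next it uses the increasing family of Bergman balls $B^b$ centred at $(b,0,\dots,0)$ with $0\in\partial B^b$, all tangent to $\{z_1=0\}$ at the origin.

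The ingredient missing from your proposal is the curvature bound of Theorem~\ref{thm:curvatura} with \eqref{eq:curvaturabergman}. At the origin $\kappa_\Sigma\le\sqrt{n+1}\,\alpha_0<1$, so near $0$ the hypersurface is a graph $z_1=\phi(z')$ with $|\phi(z')|\le c|z'|^2$ and $c<\tfrac12$. Near the origin it therefore meets no $\overline{B^b}$ except at $0$. The argument then runs as follows:
\begin{enumerate}
\item For small $b$, $u$ has no zeros in $B^b$.
\item Since $p\in B^b$ for $b$ close to $1$, the supremum $b_1$ of admissible $b$ is less than $1$.
\item At $b_1$ a second zero $z_2\ne0$ appears on $\partial B^{b_1}$, away from the origin.
\end{enumerate}
Only now does your final step apply: the positive convex function $\rho_{u,B^{b_1}}$ on the $B^{b_1}$-geodesic from $0$ to $z_2$ tends to $0$ at both ends, which is the contradiction.

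Alternatively, you can use Theorem~\ref{thm:nocerosbola} directly, as in Remark~\ref{obs:univdirecta}. The solutions $u_a,u_b$ it provides (via Koebe transforms) have no zeros, and their densities have unique minima at $a$ and $b$. Since $u_a/u_b$ depends only on the value of $F$, it takes the same value at $a$ and $b$. This contradicts $\rho_{u_a}(a)<\rho_{u_a}(b)$ together with $\rho_{u_b}(b)<\rho_{u_b}(a)$.
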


\begin{proof}
Suppose that $F$ is not injective. By Lemma~\ref{lem:ceroscomunes} there are $n$ linearly independent solutions $u_1,\dots,u_n$ of the associated system that vanish at two distinct points, and by the linear invariance of the family we may assume that these points are the origin and $p=(a,0,\dots,0)$ with $0<a<1$. The gradients $\nabla u_i(0)$ are linearly independent, because a solution that vanishes at the origin together with its gradient is identically zero. Hence there is a linear combination $u$ with $\nabla u(0)=(1,0,\dots,0)$. Its zero set $\Sigma$ is a smooth complex hypersurface that passes through the origin and through $p$, and whose tangent plane at the origin is $\{z_1=0\}$.

For $0<b<1$ let $B^b$ be the Bergman ball with center $(b,0,\dots,0)$ whose boundary passes through the origin. Writing $z=(z_1,z')$, it is the set
\[
B^b=\bigl\{z\in\B:\ |1-bz_1|^2<1-|z|^2\bigr\}=\bigl\{2b\,\RE z_1>b^2|z_1|^2+|z|^2\bigr\}.
\]
These balls increase with $b$, they are all tangent to $\{z_1=0\}$ at the origin, and $p\in B^b$ as soon as $2b>a(1+b^2)$, which happens for $b$ close to $1$.

We compare $\Sigma$ with these balls near the origin. There $\Sigma$ is a graph $z_1=\phi(z')$ with $\phi$ holomorphic, $\phi(0)=0$ and $\nabla\phi(0)=0$, and its extrinsic curvature at the origin, in the direction $(0,v')$, is $|\Hess\phi(0)(v',v')|$. By \eqref{eq:curvaturabergman} this curvature does not exceed $\sqrt{n+1}\,\alpha_0$, which is less than $1$ because $\delta(n,\alpha_0)<1$. Hence there are a neighborhood $U$ of the origin and a constant $c<\tfrac12$ such that $|\phi(z')|\le c\,|z'|^2$ on $\Sigma\cap U$. If a point of $\Sigma\cap U$ were in the closure of some $B^b$, we would have $|z'|^2\le2b\,\RE\phi(z')\le2c\,|z'|^2$, and therefore $z'=0$. Thus, inside $U$, the hypersurface $\Sigma$ meets the closures of all the balls $B^b$ only at the origin. In particular $u$ does not vanish in $B^b$ when $b$ is small.

Let $b_1$ be the supremum of the $b$ such that $u$ does not vanish in $B^b$. If $b_1=1$ we have a contradiction, because $u(p)=0$ and $p$ belongs to $B^b$ for $b$ close to $1$. Suppose then that $b_1<1$ and let $B_1=B^{b_1}$. The solution $u$ does not vanish in $B_1$, and it has zeros in $B^b$ for every $b>b_1$. These zeros lie outside $U$, and a limit point of them as $b\to b_1$ is a point $z_2$ of the boundary of $B_1$, different from the origin, with $u(z_2)=0$.

The density $\rho_{u,B_1}$ is convex with respect to the Bergman metric of $B_1$. The geodesic of this metric that joins the boundary points $0$ and $z_2$ reaches them transversally. Near each of them, $|u|$ is at most proportional to the distance to the point, while the denominator of $\rho_{u,B_1}$ is comparable to the square root of that distance. Hence $\rho_{u,B_1}$ tends to zero at both ends of the geodesic. But a positive convex function on the whole line cannot tend to zero at both ends. This contradiction proves the theorem.
\end{proof}

This is the genuine analogue of Nehari's theorem in the ball, with a single hypothesis, on $\|S_F\|$. The proof uses in an essential way the geometry of the preimages of hyperplanes. What prevents $\Sigma$ from meeting the balls $B^b$ again near the origin is the bound on its curvature given by Theorem~\ref{thm:curvatura}.

\begin{obs}\label{obs:univdirecta}
Once it is known that the solutions $u_{z_0}$ do not vanish, univalence can also be deduced without auxiliary balls. Suppose $F(a)=F(b)$ with $a\ne b$, and let $u_a$, $u_b$ be the solutions constructed before the theorem. By Theorem~\ref{thm:convexidad} the density $\rho_{u_a}$ has its unique minimum at $a$ and $\rho_{u_b}$ has it at $b$, so that
\[
\rho_{u_a}(a)<\rho_{u_a}(b),\qquad\rho_{u_b}(b)<\rho_{u_b}(a).
\]
But $u_a=J_F^{-1/(n+1)}\,l_a(F)$ and $u_b=J_F^{-1/(n+1)}\,l_b(F)$ with $l_a,l_b$ affine functions. The quotient $u_a/u_b=l_a(F)/l_b(F)$ depends only on the value of $F$ and takes the same value at $a$ and at $b$, and then
\[
\frac{\rho_{u_a}(a)}{\rho_{u_b}(a)}=\frac{\rho_{u_a}(b)}{\rho_{u_b}(b)},
\]
which contradicts the two preceding inequalities. The same argument shows that the hyperplanes $H(z_0)$ are distinct for distinct points $z_0$.
\end{obs}

The convexity also gives a distortion theorem. We say that $F\in\mathcal F_{\alpha_0}$ is \emph{normalized} if $\nabla J_F(0)=0$; every $F\in\mathcal F_{\alpha_0}$ is brought to this form by composing with the transformation $T_a$ of \eqref{eq:hiperplano}, which is holomorphic in $F(\B)$.

\begin{thm}[\cite{CH15}]\label{thm:distorsion}
If $F\in\mathcal F_{\alpha_0}$ is normalized, then
\[
|J_F(z)|\le\frac{2^{n+1}}{(1-|z|)^{\mu}},\qquad\mu=\frac{n+1}{2}\bigl(1-\beta\sqrt{n+1}\bigr).
\]
If in addition $F$ is quasiregular, $\|DF(z)\|\le C\,(1-|z|)^{-\mu/n}$, and in particular $F(\B)$ is bounded and $F$ has a continuous extension, in fact a Hölder continuous one, to $\overline{\B}$.
\end{thm}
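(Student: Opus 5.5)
The plan is to read the Jacobian bound directly off the lower bound of Theorem~\ref{thm:nocerosbola}. If $F$ is normalized, $\nabla J_F(0)=0$, so in \eqref{eq:hiperplano} we have $a=0$. The normalized solution with $u(0)=1$, $\nabla u(0)=0$ is then exactly $u=J_F^{-1/(n+1)}$, since $J_F(0)=1$ and $\nabla u(0)=-\nabla J_F(0)/(n+1)=0$. Theorem~\ref{thm:nocerosbola} gives
\[
|J_F(z)|^{-\frac{1}{n+1}}\ge\sqrt{1-|z|^2}\,\cosh\bigl(\beta d(0,z)\bigr)\ge\tfrac12\sqrt{1-|z|^2}\,\Bigl(\frac{1+|z|}{1-|z|}\Bigr)^{\beta\sqrt{n+1}/2},
\]
using $\cosh x\ge e^x/2$ and the formula for $d(0,z)$. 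Raising to the power $-(n+1)$ we get
\[
|J_F(z)|\le 2^{n+1}(1-|z|^2)^{-\frac{n+1}{2}}\Bigl(\frac{1-|z|}{1+|z|}\Bigr)^{\frac{n+1}{2}\beta\sqrt{n+1}}.
\]
Write $1-|z|^2=(1-|z|)(1+|z|)$ and discard the factors of $(1+|z|)$, which carry negative exponents and are at least $1$. This leaves $2^{n+1}(1-|z|)^{-\mu}$ with $\mu=\frac{n+1}{2}(1-\beta\sqrt{n+1})$. This part is routine bookkeeping.

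For the derivative bound, quasiregularity means $\|DF(z)\|^n\le K|J_F(z)|$ for some constant $K$. Indeed, all singular values of $DF$ are comparable to the largest, and $|J_F|$ is their product. Hence $\|DF(z)\|\le K^{1/n}2^{(n+1)/n}(1-|z|)^{-\mu/n}$, which is the stated estimate.

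For boundedness and the extension, note first that $\beta^2=(1-\delta)/(n+1)$ gives $\beta\sqrt{n+1}=\sqrt{1-\delta}\in(0,1]$. Hence $\mu/n\le\frac{n+1}{2n}(1-\sqrt{1-\delta})$. The key point, and the step I expect to need care, is to check that $\mu/n<1$; this follows if $\delta(n,\alpha_0)$ is small enough, and possibly after shrinking $\alpha_0$. Once the exponent $\gamma=\mu/n$ is in $(0,1)$, the standard Hardy–Littlewood argument applies. Integrating $DF$ along radii gives $|F(z)|\le C\int_0^1(1-t)^{-\gamma}\,dt<\infty$, so $F(\B)$ is bounded. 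For two points $z,w$ near the boundary, join them by a path that goes radially inward a distance comparable to $|z-w|$, runs along a sphere, and comes back out. The bound $\|DF\|\lesssim(1-|z|)^{-\gamma}$ then yields $|F(z)-F(w)|\le C|z-w|^{1-\gamma}$. Thus $F$ is uniformly Hölder continuous in $\B$ and extends Hölder-continuously to $\overline{\B}$.

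The main obstacle is therefore not the estimate itself but guaranteeing $\mu<n$ so that $(1-|z|)^{-\mu/n}$ is integrable. I would first verify this for $n\ge2$ from $\mu\le\frac{n+1}{2}<n$, which holds since $\beta>0$. The bound $\mu/n\le\frac{n+1}{2n}<1$ already suffices, with no shrinking needed.
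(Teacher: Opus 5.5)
Your proof is correct and follows the paper's argument. You identify $J_F^{-1/(n+1)}$ as the normalized solution, apply the $\cosh$ lower bound of Theorem~\ref{thm:nocerosbola} with $\cosh x\ge e^x/2$, and then use $\|DF\|^n\asymp|J_F|$ for quasiregular maps together with $\mu/n<1$ to integrate along radii. Your remark that $\mu/n\le\frac{n+1}{2n}<1$ requires no shrinking of $\alpha_0$, and your path argument giving Hölder exponent $1-\mu/n$, just make explicit what the paper leaves implicit.
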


\begin{proof}
In this case the normalized solution is $J_F^{-1/(n+1)}$, and Theorem~\ref{thm:nocerosbola} gives
\[
\frac{|J_F(z)|^{-\frac{1}{n+1}}}{\sqrt{1-|z|^2}}\ge\cosh\bigl(\beta\,d(0,z)\bigr)\ge\frac12\Bigl(\frac{1}{1-|z|}\Bigr)^{\frac{\beta\sqrt{n+1}}{2}} ,
\]
from which the first bound follows. If $F$ is quasiregular, $\|DF\|^n$ is comparable to $|J_F|$, and since $\mu/n<1$ the resulting bound is integrable along the radii.
\end{proof}

The exponent $\mu$ is smaller than the one that would be obtained from the order of the family, and it tends to zero as $\alpha_0\to0$. The inequality \eqref{eq:hess} with $\beta>0$ gives exponential growth of the density in the Bergman distance, which is more information than a bound for $|\nabla J_F|$.

In $\C^n$ there is no point at infinity to add, nor an evident reflection. What there is instead is the family of hyperplanes $H(z_0)$, and the proposal of \cite{CH15} is to take it as the definition of the extension, with values in the dual projective space. Recall that a domain $\Omega\subset\C^n$ is \emph{weakly linearly convex} if through each point of its boundary there passes a complex hyperplane that does not meet $\Omega$ \cite{APS,Hor}. Every convex domain is weakly linearly convex, but the notion is weaker, and it is invariant under Möbius transformations. For the boundary results we require somewhat more than $\delta<1$, namely $\beta\sqrt{n+1}>1/n$, which is achieved by making $\alpha_0$ smaller.

\begin{thm}[\cite{CH15}]\label{thm:AW}
Let $F\in\mathcal F_{\alpha_0}$, with $\alpha_0$ as just indicated.
\begin{enumerate}
\item[(a)] $F(\B)$ is a weakly linearly convex domain.
\item[(b)] If in addition $F$ is normalized and quasiregular, its continuous extension to $\overline{\B}$ is injective; the hyperplanes $H(z_0)$ are disjoint from $F(\overline{\B})$; and as $z_0\to\xi\in\partial\B$, $H(z_0)$ accumulates on hyperplanes that pass through $F(\xi)$.
\item[(c)] In the situation of \textup{(b)}, the mapping
\[
E_F(z)=\begin{cases}F(z),&|z|\le1,\\ H(z^*),&|z|>1,\end{cases}\qquad z^*=z/|z|^2,
\]
is injective and continuous. It assigns distinct hyperplanes to distinct points of the exterior, and it glues $F(\B)$ to a complementary domain in the projective dual of $\C^n$.
\end{enumerate}
\end{thm}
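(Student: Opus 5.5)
The plan rests on the family of hyperplanes $H(z_0)$ built before Theorem~\ref{thm:univCH15}. For each $z_0\in\B$ take a Koebe transform $\Lambda_\sigma(F)$ with $\sigma(0)=z_0$ and apply Theorem~\ref{thm:nocerosbola} to it. Transporting back with Lemma~\ref{lem:rhoinv} gives a solution $u_{z_0}$ that has no zeros in $\B$. Its density has its unique minimum at $z_0$, and it satisfies the growth bound $\rho_{u_{z_0}}(z)\ge\rho_{u_{z_0}}(z_0)\cosh(\beta\,d(z_0,z))$. Each such solution has the form $u_{z_0}=J_F^{-1/(n+1)}l_{z_0}(F)$ with $l_{z_0}$ affine, so $H(z_0)=\{l_{z_0}=0\}$ misses $F(\B)$.

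For (a), I would take $w_0\in\partial F(\B)$ and a sequence $z_k\in\B$ with $F(z_k)\to w_0$. Since $F(\B)$ is open, $z_k$ must leave compact sets. Normalize the $l_{z_k}$ as points of the dual projective space, which is compact, and pass to a limit hyperplane $H_\infty$. Being a limit of hyperplanes disjoint from the open set $F(\B)$, $H_\infty$ is still disjoint from it. The step that matters is showing $w_0\in H_\infty$, that is, $l_{z_k}(F(z_k))\to0$. I expect this to come from the density at its minimum: the ratio $|u_{z_k}(z_k)|/|J_F(z_k)|^{-1/(n+1)}=|l_{z_k}(F(z_k))|$ should tend to zero. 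The reason is that $u_{z_k}$ is normalized at $z_k$ (value $1$ after the Koebe transform), while $|J_F(z_k)|^{-1/(n+1)}$ measured against the corresponding normalization blows up near the boundary relative to it. I would make this precise through the Koebe transform $\Lambda_{\sigma_k}(F)$: the quantity becomes the distance from $0$ to the image-hyperplane of the normalized transform, divided by a factor comparable to $(1-|z_k|^2)^{1/2}\,|J_F(z_k)|^{1/(n+1)}$. I expect this to be the main obstacle. If it fails, I would fall back on the convexity of $\rho_{u}$ along the geodesic from $z_k$ toward the boundary.

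For (b), Theorem~\ref{thm:distorsion} gives a Hölder extension of $F$ to $\overline{\B}$ once $\beta\sqrt{n+1}>1/n$. Injectivity on $\partial\B$ would follow from a boundary version of Remark~\ref{obs:univdirecta}. Suppose $F(\xi)=F(\eta)$, or $F(\xi)=F(z)$ with $z$ interior. Take $a,b$ approaching the two points, and compare the ratios $\rho_{u_a}/\rho_{u_b}$. The $\cosh(\beta d)$ growth should be strong enough to dominate the Hölder error in $F$, and this is where the stronger condition $\beta\sqrt{n+1}>1/n$ enters. The accumulation of $H(z_0)$ on hyperplanes through $F(\xi)$ is the argument of (a), now with the continuous extension. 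Disjointness from $F(\overline\B)$ then follows: a point $F(\xi)\in H(z_0)$ would force $u_{z_0}(\xi)=0$ in the boundary limit, against the lower bound $|u_{z_0}(z)|\ge c\sqrt{1-|z|^2}\cosh(\beta d(z_0,z))$, which does not go to zero because $\beta\sqrt{n+1}>0$.

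For (c), I would argue as follows. Continuity of $z_0\mapsto H(z_0)$ in $\B$ is clear because the coefficients of $l_{z_0}$ depend smoothly on the first jet of $u$ at $z_0$. Injectivity on the exterior is the last sentence of Remark~\ref{obs:univdirecta}. Gluing at $|z|=1$ is the accumulation statement of (b), which I would upgrade to genuine convergence by showing that all limit hyperplanes at $\xi$ coincide. That would use again the uniform convexity \eqref{eq:hess}, applied to two solutions whose minima approach $\xi$.
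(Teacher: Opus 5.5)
The step you single out as the main obstacle in (a), that $F(z_k)$ and $H(z_k)$ come together, is the real content of the theorem. Your sketch for it cannot work as stated.

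Take $\sigma\in\Aut(\B)$ with $\sigma(0)=z_0$, and set $G=\Lambda_\sigma(F)$, $L=DF(z_0)\,D\sigma(0)$ and $(n+1)a_G=\nabla J_G(0)$. Then $H(z_0)=F(z_0)+L\bigl(\{w:\ 1+\bil{a_G}{w}=0\}\bigr)$, so
\[
\operatorname{dist}\bigl(F(z_0),H(z_0)\bigr)=\frac{1}{\bigl|(L^{-1})^ta_G\bigr|}\le\frac{\|DF(z_0)\|\,\sqrt{1-|z_0|^2}}{|a_G|}.
\]
The hyperplane of the Koebe transform is carried back by a matrix. So what has to be controlled is the operator norm $\|DF(z_0)\|$, not a power of $|J_F(z_0)|$ as in your sketch. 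Two estimates are needed.

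The lower bound $|a_G|\gtrsim\beta$ near $\partial\B$, for normalized $F$, does follow from \eqref{eq:hess}. Indeed, $|a_G|$ is comparable to the Bergman gradient of $\log\rho_F$ at $z_0$, and $(\rho')^2-\beta^2\rho^2$ is nondecreasing along geodesics leaving the minimum.

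You also need $\|DF(z_0)\|=o\bigl((1-|z_0|)^{-1/2}\bigr)$, and this is available only for quasiregular $F$. There $\|DF\|^n\lesssim|J_F|$, and Theorem~\ref{thm:distorsion} applies with $\mu/n<1/2$, which is what $\beta\sqrt{n+1}>1/n$ guarantees (the Hölder extension alone needs only $\mu/n<1$). For a general $F\in\mathcal F_{\alpha_0}$, a bound on $J_F$ says nothing about $\|DF\|$, so your direct attack on (a) has no way to close.

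This is why the paper reverses the order. It proves (b) first, then obtains (a) by applying (b) to the dilations $F_r(z)=F(rz)$. These are quasiregular on $\overline{\B}$ and, after normalization, stay in $\mathcal F_{\alpha_0}$ by the rescaling behind Lemma~\ref{lem:subbola}. Each $F(r\B)$ is then weakly linearly convex, and one lets $r\to1$, taking limits of the supporting hyperplanes.

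In (b), the disjointness argument rests on a false claim. Along a radius, $\sqrt{1-|z|^2}\,\cosh\bigl(\beta\,d(z_0,z)\bigr)\asymp(1-|z|)^{(1-\beta\sqrt{n+1})/2}$. Since $\beta\sqrt{n+1}=\sqrt{1-\delta(n,\alpha_0)}<1$, your lower bound does tend to zero; for it to stay away from zero you would need $\beta\sqrt{n+1}\ge1$, not merely $>0$. So $u_{z_0}\to0$ at $\xi$ is no contradiction unless you show it vanishes faster than this power. That again requires rates for $|F(z)-F(\xi)|$ and for $|J_F(z)|^{-1/(n+1)}$.

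Boundary injectivity has the same defect. Suppose $F(\xi)=F(\eta)$ and $a\to\xi$, $b\to\eta$. Then $l_a(F(a))$, $l_a(F(b))$, $l_b(F(a))$, $l_b(F(b))$, normalized in the dual space, all tend to zero, and the identity of Remark~\ref{obs:univdirecta} degenerates to $0/0$. What the convexity really gives is
\[
\frac{|l_a(F(b))|\,|l_b(F(a))|}{|l_a(F(a))|\,|l_b(F(b))|}\ \ge\ \cosh^2\bigl(\beta\,d(a,b)\bigr).
\]
To contradict this you need a lower bound for $\operatorname{dist}(F(a),H(a))$ to play against the Hölder modulus of $F$. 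Your phrase ``the growth should dominate the Hölder error'' names precisely the estimate that is missing.

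In (c), the gluing only asks that $H(z^*)$ approach $F(\xi)$. That is again $\operatorname{dist}(F(z_0),H(z_0))\to0$, together with the continuity of $F$ on $\overline{\B}$. The convergence to a single limit hyperplane that you propose to prove is not asserted by the theorem, is not needed, and comes with no argument.

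In all three parts the missing ingredient is the one the paper points to: a quantitative estimate, based on Theorem~\ref{thm:distorsion}, of how far $F(z_0)$ is from (a distinguished point of) $H(z_0)$.
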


Part (a) does not require quasiregularity, since it is obtained from (b) applied to the dilations $F_r(z)=F(rz)$ and passing to the limit. The proofs of (b) and (c) are careful estimates, based on Theorem~\ref{thm:distorsion}, of the distance between $F(z_0)$ and a distinguished point of the hyperplane $H(z_0)$, and we shall not reproduce them; see \cite{CH15}.

Questions (B) and (C) are thus answered in the ball. That the geometric conclusion is weak linear convexity, and not ordinary convexity, is consistent with everything above. The hypothesis $\|S_F\|\le\alpha_0$ is invariant under Möbius transformations, and it can imply only properties that are also invariant. What is not known is the best value of $\alpha_0$, not even its order of magnitude in $n$.

\section{The polydisk}\label{sec:polidisco}

Everything done in the ball rests on Theorem~\ref{thm:invariancia}. To see how much depends on it, we pass to the other model domain of $\C^n$. In $\Pn$ the norm $\|S_F\|$ is defined with the metric \eqref{eq:bergmanpoli}, and the rule \eqref{eq:cadenaop} remains valid; but the automorphisms $\psi_a$ of \eqref{eq:autpoli} are not Möbius, their Schwarzian is not zero, and the most that one obtains is
\begin{equation}\label{eq:psi}
\|S_{F\circ\psi_a}\|\le\|S_F\|+\|S_{\psi_a}\|\le\|S_F\|+2\sqrt2\,|a|_\infty .
\end{equation}
The class $\{\|S_F\|\le\alpha\}$ is no longer linearly invariant. What is gained in exchange is rigidity. The product structure makes the hypothesis much more restrictive than in the ball.

\begin{lem}[\cite{CH09}]\label{lem:poli}
If $F:\Pn\to\C^n$ is locally biholomorphic with $\|S_F\|\le\alpha<\infty$, then
\begin{equation}\label{eq:cotapoli}
|S^k_{ij}F(z)|\le\frac{\sqrt2\,\alpha\,(1-|z_k|^2)}{(1-|z_i|^2)(1-|z_j|^2)},\qquad z\in\Pn,
\end{equation}
and consequently $S^k_{ij}F\equiv0$ whenever $k\ne i$ and $k\ne j$.
\end{lem}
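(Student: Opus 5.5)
The plan is to extract \eqref{eq:cotapoli} directly from Definition~\ref{def:norma} by testing $S_F(z)$ on suitably normalized coordinate vectors. The diagonal form \eqref{eq:bergmanpoli} of the Bergman metric of $\Pn$ makes this especially clean. The vanishing statement will then follow from a maximum principle in the single variable $z_k$.

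First, by homogeneity, the hypothesis reads $\|S_F(z)(v,v)\|_z\le\alpha\|v\|_z^2$ for every $v$. For fixed $z$ set
\[
x_i=\frac{1-|z_i|^2}{\sqrt2}\,e_i ,
\]
which is a unit vector for \eqref{eq:bergmanpoli}. For $i\ne j$ we have $\|x_i\pm x_j\|_z^2=2$, since the metric is diagonal. Polarization gives
\[
S_F(z)(x_i,x_j)=\tfrac14\bigl[S_F(z)(x_i+x_j,x_i+x_j)-S_F(z)(x_i-x_j,x_i-x_j)\bigr],
\]
and each term on the right has Bergman norm at most $2\alpha$. Hence $\|S_F(z)(x_i,x_j)\|_z\le\alpha$. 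When $i=j$ the bound $\|S_F(z)(x_i,x_i)\|_z\le\alpha$ is immediate.

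Next, \eqref{eq:bergmanpoli} shows that a single component is controlled by the norm: for any vector $w$, $\|w\|_z\ge\sqrt2\,|w_k|/(1-|z_k|^2)$. Apply this to
\[
w=S_F(z)(x_i,x_j)=\tfrac12(1-|z_i|^2)(1-|z_j|^2)\,\bigl(S^1_{ij}F,\dots,S^n_{ij}F\bigr)(z).
\]
This gives
\[
\frac{\sqrt2}{1-|z_k|^2}\cdot\frac{(1-|z_i|^2)(1-|z_j|^2)}{2}\,|S^k_{ij}F(z)|\le\alpha,
\]
which is exactly \eqref{eq:cotapoli}. The same constant $\sqrt2$ appears whether or not $i=j$.

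For the rigidity, suppose $k\ne i$ and $k\ne j$. Fix all coordinates except $z_k$ and regard $\zeta\mapsto S^k_{ij}F(\dots,\zeta,\dots)$, with $\zeta$ in the $k$-th slot, as a holomorphic function on $\D$. In \eqref{eq:cotapoli} the denominator involves only $z_i$ and $z_j$, which are now fixed. Hence this function is bounded by $c\,(1-|\zeta|^2)$ with $c$ constant, and it tends to $0$ as $|\zeta|\to1$. By the maximum principle it vanishes identically, and since the other coordinates were arbitrary, $S^k_{ij}F\equiv0$.

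The only point requiring care is the polarization step for $i\ne j$. That step depends on the diagonality of the metric: it is what makes $\|x_i\pm x_j\|_z^2=2$ exactly, and it is also what lets one isolate a single component. Apart from this, the argument is routine.
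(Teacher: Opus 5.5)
Your proposal is correct and follows the paper's proof: you test $S_F(z)$ on the Bergman-unit vectors $2^{-1/2}(1-|z_i|^2)e_i$, polarize when $i\ne j$, read off the $k$-th component through the diagonal metric, and apply the maximum principle in $z_k$ when $k\ne i,j$. One small correction to your closing remark: the polarization step needs only $\|x_i+x_j\|_z^2+\|x_i-x_j\|_z^2=4$, which is the parallelogram law and holds for any Hermitian metric, so diagonality of the metric is really used only when you isolate the $k$-th component.
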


\begin{proof}
The inequality is obtained by evaluating $S_F(z)$ on the unit vectors $2^{-1/2}(1-|z_i|^2)e_i$, and polarizing when $i\ne j$. If $k\ne i,j$, the right-hand side of \eqref{eq:cotapoli} tends to zero as $|z_k|\to1$ with the other variables fixed; since $S^k_{ij}F$ is holomorphic in $z_k$, the maximum principle gives $S^k_{ij}F\equiv0$.
\end{proof}

Nothing similar occurs in the ball, where Lemma~\ref{lem:cotas} gives only bounds. The vanishing of these coefficients has structural consequences. If $F(\xi)=0$ and $DF(\xi)=\Id$, each component can be written
\begin{equation}\label{eq:estructura}
f_i(z)=\sum_{m\ge1}a_{im}(z_1,\dots,\widehat{z_i},\dots,z_n)\,(z_i-\xi_i)^m,
\end{equation}
with coefficients that do not depend on $z_i$; moreover $\partial f_i/\partial z_i\ne0$ in $\Pn$, and any two components are related by $f_i=\lambda_{ij}f_j+\mu_{ij}$ with $\lambda_{ij},\mu_{ij}$ independent of $z_j$. With this, a hypothetical failure of injectivity of $F$ is reduced, variable by variable, to a failure of injectivity of a function of \emph{one} variable, and to that function a classical criterion can be applied.

\begin{thm}[\cite{CH09}]\label{thm:poli}
Let $n\ge2$ and let $F:\Pn\to\C^n$ be locally biholomorphic. If
\[
\|S_F\|\le\frac{1}{3\sqrt2},
\]
then $F$ is univalent in $\Pn$.
\end{thm}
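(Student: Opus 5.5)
We follow the route indicated after Lemma~\ref{lem:poli}: use the vanishing of most Schwarzian coefficients to give $F$ a rigid product-like structure, and then reduce a failure of injectivity to a one-variable problem where a classical criterion applies.

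\textbf{Step 1 (surviving coefficients).} By Lemma~\ref{lem:poli}, the only nonzero coefficients are $S^i_{ii}F$ and $S^i_{ij}F=S^i_{ji}F$. The surviving bounds from \eqref{eq:cotapoli} are
\[
|S^i_{ii}F|\le\frac{\sqrt2\,\alpha}{1-|z_i|^2},\qquad |S^i_{ij}F|\le\frac{\sqrt2\,\alpha}{1-|z_j|^2}.
\]
Since $S^i_{ij}F$ is then bounded uniformly in $z_i$, it is a natural candidate for the maximum principle as well. Taking $|z_i|\to1$ with the other variables fixed forces $S^i_{ij}F$ to be independent of $z_i$.

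\textbf{Step 2 (structure of $F$).} We feed these coefficients into Oda's system \eqref{eq:sistema} and into the formula \eqref{eq:SP} relating $S_F$ to $P_F$. The goal is the structure \eqref{eq:estructura}: after an affine normalization at a point $\xi$, each $f_i$ is affine in a suitable sense in the variables $z_j$, $j\ne i$, is a power series in $z_i-\xi_i$ whose coefficients do not depend on $z_i$, and satisfies $\partial f_i/\partial z_i\ne0$. We also want the relations $f_i=\lambda_{ij}f_j+\mu_{ij}$ with $\lambda_{ij},\mu_{ij}$ independent of $z_j$.

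\textbf{Step 3 (reduction to one variable).} Suppose $F(a)=F(b)$ with $a\ne b$. Move from $a$ to $b$ one coordinate at a time along the points $(b_1,\dots,b_k,a_{k+1},\dots,a_n)$, and use the relations between components to transfer the equality of values. This should produce an index $i$ and a slice on which the function $\zeta\mapsto f_i(\dots,\zeta,\dots)$ of the single variable $z_i$ takes the same value twice, even though $a_i\ne b_i$.

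\textbf{Step 4 (one-variable criterion).} For this slice function $g$, the restriction of $P_F(e_i,e_i)$, read off from \eqref{eq:SP}, identifies the pre-Schwarzian $g''/g'$ in terms of $S^i_{ii}F$ and of $\partial_i\log J_F$. The $\partial_i\log J_F$ term is in turn controlled through the canonical form \eqref{eq:canonica}, which gives $\sum_j S^j_{ij}=0$, so it too is expressed through the surviving coefficients. Combining the Step~1 bounds over the $n$ indices then gives Becker's condition
\[
(1-|\zeta|^2)\,|\zeta\,g''/g'|\le1.
\]
The expected constant counts the coefficients: $\tfrac{n+1}{n-1}$ or a factor near $3$ appears, and $3\sqrt2\,\alpha\le1$ is exactly what is needed. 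Becker's criterion then makes $g$ univalent, which is the desired contradiction. An alternative, if the pre-Schwarzian bound does not close, is to compute the classical Schwarzian of $g$ from \eqref{eq:S0ii} and apply Nehari's Theorem~\ref{thm:nehari}.

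\textbf{Main obstacle.} I expect the main obstacle to be Step~3. The coordinatewise chaining must keep the intermediate points inside $\Pn$, and the coefficients $\lambda_{ij}$ must not vanish, so that equality of one component propagates correctly. Handling this may require choosing the ordering of the coordinates carefully.
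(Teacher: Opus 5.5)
\textbf{The gap is in Step 4, not in Step 3.} The term $\partial_i\log J_F$ is the trace of $P_F(e_i,\cdot)$, which is exactly the part that $S_F$ discards. The canonical form $\sum_jS^j_{ij}=0$ holds for every locally biholomorphic $F$, so it carries no information about that term. You can see this concretely on the line $L=\{(w,\xi_2,\dots,\xi_n)\}$, where the rows $j\ge2$ of $DF$ are $\partial_jf_j\,e_j^t$. There you get $f''/f'=S^1_{11}F+\tfrac{2}{n+1}\partial_1\log J_F$ and $\partial_1\log J_F=f''/f'+\sum_{j\ge2}\bigl(S^j_{1j}F+\tfrac{1}{n+1}\partial_1\log J_F\bigr)$, and once $\sum_jS^j_{1j}=0$ is used the two identities are the same, so nothing is gained. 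In fact no bound of $f''/f'$ by $\|S_F\|$ alone can hold. Take $T_a$ with $a=(a_1,0,\dots,0)$ and $0<|a_1|\le1$: it has $S_{T_a}\equiv0$, but its slice $w/(1+a_1w)$ has $f''/f'(0)=-2a_1$.

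The missing idea is a Möbius normalization. Composing with $M(w)=w/(1+cw_1)$ leaves $S_F$ unchanged. It replaces the slice by $f/(1+cf)$, which still takes equal values at $\xi_1$ and $\zeta_1$. The relation $f_2=\lambda_{21}f_1+\mu_{21}$ makes $\partial_2f_2|_L$ an affine function of $f$, so you can choose $c$ with $\partial^2f_2/\partial z_1\partial z_2=0$ on $L$. Then $P^2_{12}F=0$ there, that is, $\partial_1\log J_F=-(n+1)S^2_{12}F$, and this gives $f''/f'=S^1_{11}F-2S^2_{12}F$. By \eqref{eq:cotapoli} each term is at most $\sqrt2\,\alpha/(1-|w|^2)$ on $L$. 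So the $3$ in $3\sqrt2$ is $1+2$, not a count of coefficients. Your Nehari fallback does avoid the trace problem, since $Sf$ is Möbius invariant. However, you only name it, and it needs $S^0_{11}$ and derivative estimates that do not obviously give $1/(3\sqrt2)$.

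\textbf{Step 1 is also wrong.} The maximum principle in Lemma~\ref{lem:poli} works because the bound tends to $0$ as $|z_k|\to1$. A bound that is merely independent of $z_i$ gives nothing, since bounded holomorphic functions on $\D$ need not be constant. The conclusion is in fact false. On $\mathbb{D}^2$ with $R\ge2$, take $F(z_1,z_2)=\bigl(z_2,(z_1+R)z_2-(z_1+R)^2\bigr)$. It has $J_F=2z_1+2R-z_2\ne0$, $S^2_{11}F=S^1_{22}F=0$, and all other coefficients equal to $\pm2/(3J_F)$, so $\|S_F\|=O(1/R)$. Yet $S^1_{12}F=-2/(3J_F)$ depends on $z_1$. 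Step 2 must therefore not rest on this claim.

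The structure \eqref{eq:estructura} comes from $S^k_{ij}F\equiv0$ for $k\notin\{i,j\}$ alone. On $\{z_k=\xi_k\}$ the equations for $\partial_{ij}u$ with $i,j\ne k$ do not involve $\partial_ku$. So the solution $f_kJ_F^{-1/(n+1)}$, whose value and tangential gradient vanish at $\xi$, vanishes on that whole slice. With this, the chaining you feared in Step 3 is routine. Each $e_j$-line is mapped into a line on which $f_j$ is a nonconstant affine parameter, because $\partial_jf_j\ne0$. Hence $F=0$ transfers from $\zeta$ to $(\zeta_1,\xi_2,\dots,\xi_n)$ one coordinate at a time, whether or not $\lambda_{ij}$ vanishes.
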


\begin{proof}[Outline]
Suppose $F(\xi)=F(\zeta)$ with $\xi_1\ne\zeta_1$, and normalize $F(\xi)=0$, $DF(\xi)=\Id$. Using \eqref{eq:estructura} and the relations between components, one shows inductively that $f(w)=f_1(w,\xi_2,\dots,\xi_n)$ is locally injective in $\D$ and has $f(\xi_1)=f(\zeta_1)=0$. At the points $(w,\xi_2,\dots,\xi_n)$ the differential $DF$ is diagonal and, after composing with a suitable Möbius transformation to make $\partial^2f_2/\partial z_1\partial z_2$ vanish along that line, we find
\[
\frac{f''}{f'}(w)=S^1_{11}F-2\,S^2_{12}F .
\]
By \eqref{eq:cotapoli}, $|f''/f'|\le3\sqrt2\,\alpha\,(1-|w|^2)^{-1}\le(1-|w|^2)^{-1}$. Becker's criterion says that $f$ is injective, a contradiction.
\end{proof}

That the criterion one arrives at is Becker's and not Nehari's is another manifestation of Remark~\ref{obs:n1}, since the operator $S_F$ is of second order. To measure the quality of the constant we take $F(z,w)=(f(z),w)$ in $\mathbb{D}^2$. By Example~\ref{ex:producto},
\[
\|S_F\|=\frac{\sqrt2}{3\sqrt3}\,\sup_{z\in\D}\,(1-|z|^2)\Bigl|\frac{f''}{f'}(z)\Bigr|,
\]
so that the hypothesis of the theorem is equivalent to $(1-|z|^2)|f''/f'|\le\sqrt3/2\approx0.866$, which compares reasonably with Becker's sharp constant $1$. In higher dimension, with $F(z)=(f(z_1),z_2,\dots,z_n)$, one obtains $\|S_F\|=\tfrac{1}{\sqrt2}\tfrac{n-1}{n+1}\sup(1-|z_1|^2)|f''/f'|$ for $n\ge3$, and the hypothesis is equivalent to $(1-|z|^2)|f''/f'|\le\tfrac13\tfrac{n+1}{n-1}$, which moves away from $1$ as $n$ increases. In the opposite direction, there are many univalent mappings of $\Pn$ with infinite norm. For the Roper--Suffridge extension of a univalent $f$, Example~\ref{ex:RS} gives in $\mathbb{D}^2$
\[
\|S_{\Phi_f}(z,w)\|=\frac{1}{2\sqrt2}\,\frac{|w|\,|Sf(z)|\,(1-|z|^2)^2}{1-|w|^2}\longrightarrow\infty\qquad(|w|\to1),
\]
unless $f$ is a Möbius transformation.

\medskip

\noindent\textbf{The normalized solution.} Without linear invariance there is neither a variational method nor a convex density, and in \cite{CH23} one works directly with the solutions of the system, comparing along radii with a Riccati equation.

\begin{thm}[\cite{CH23}]\label{thm:noceros}
Let \eqref{eq:sistema} be a completely integrable system in canonical form in $\Pn$ whose associated Schwarzian has norm at most $\alpha$, and let $u$ be the solution with $u(0)=1$, $\nabla u(0)=0$. If
\[
n\sqrt n\,\alpha\le\frac{1}{6.1},
\]
then $u$ does not vanish in $\Pn$, and
\[
\bigl(1-|z|_\infty^2\bigr)^{\sqrt n/2}\le|u(z)|\le\bigl(1-|z|_\infty^2\bigr)^{-\sqrt n/2}.
\]
\end{thm}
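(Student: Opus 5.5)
We restrict $u$ to complex rays and compare with a Riccati equation. Fix a unit vector $v$ with respect to $|\cdot|_\infty$, that is $|v|_\infty=1$, and set $h(t)=u(tv)$ for $0\le t<1$. We also set $q(t)=\bil{\nabla u(tv)}{v}/u(tv)$, which is defined as long as $h\ne0$. The plan is to prove an a priori bound on $q$ of the form
\[
|q(t)|\le\frac{c\,t}{1-t^2},\qquad c=\sqrt n,
\]
by a continuity argument, and then to integrate $\RE q=\tfrac{d}{dt}\log|h|$. This integration gives $\bigl|\log|h(t)|\bigr|\le\tfrac{c}{2}\log\frac{1}{1-t^2}$. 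Since $|tv|_\infty=t$, this is exactly the two-sided bound in the statement, and in particular $u$ does not vanish.

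\textbf{Step 1 (the Riccati inequality).} By \eqref{eq:sistemacompacto}, with $w=tv$,
\[
q'=\frac{\Hess u(w)(v,v)}{u}-q^2=\frac{\bil{\nabla u(w)}{S_F(w)(v,v)}}{u}+S^0_F(w)(v,v)-q^2 .
\]
The first term is the difficulty, since it involves the full gradient and not only its component along $v$. We therefore enlarge the unknown. We set $\Theta(t)=\nabla u(tv)/u(tv)\in\C^n$ and control its weighted size
\[
N(t)=\max_i(1-|t v_i|^2)\,|\Theta_i(t)| .
\]
Differentiating the system along the ray gives $\Theta_i'=\sum_j v_j\bigl(\partial_{ij}u/u-\Theta_i\Theta_j\bigr)$, with $\partial_{ij}u/u=\sum_kS^k_{ij}\Theta_k+S^0_{ij}$.

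\textbf{Step 2 (the coefficient bounds).} Lemma~\ref{lem:poli} gives $|S^k_{ij}|\le\sqrt2\alpha(1-|w_k|^2)/((1-|w_i|^2)(1-|w_j|^2))$, and it also gives that only the coefficients with $k\in\{i,j\}$ survive. Hence
\[
\Bigl|\sum_{j,k}v_jS^k_{ij}\Theta_k\Bigr|\le\frac{C_1\,n\,\alpha\,N}{(1-t^2)(1-|w_i|^2)},
\]
because each surviving term carries one weight $(1-|w_k|^2)$ that cancels against the weight in $N$, and $|w_j|\le t$. For $S^0_{ij}$ I would use \eqref{eq:S0ii}--\eqref{eq:S0ij}. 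The quadratic terms are bounded by the lemma, and the derivative terms $\partial_kS^k_{ii}$ are bounded by Cauchy estimates on polydisks of radius comparable to $1-|w_k|$. This should give $|S^0_{ij}|\le C_2n\alpha/((1-|w_i|^2)(1-|w_j|^2))$, with a factor $\sqrt n$ possibly lost in the sums; this accounts for the $n\sqrt n$ in the hypothesis. The quadratic term $\sum_jv_j\Theta_i\Theta_j$ is bounded by $nN^2/((1-|w_i|^2)(1-t^2))$. Multiplying by $(1-|w_i|^2)$ and using that $\frac{d}{dt}(1-t^2|v_i|^2)\le0$, we arrive at
\[
N'\le\frac{a+bN+nN^2}{1-t^2},\qquad N(0)=0,
\]
with $a,b=O(n\sqrt n\,\alpha)$. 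I would try to make the estimate on $\Theta_i\Theta_j$ sharper than the crude bound, namely $\le\sqrt n\,N^2$, by Cauchy--Schwarz on $\sum_j|v_j||\Theta_j|$.

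\textbf{Step 3 (comparison and conclusion).} Let $y$ solve the Riccati equation $(1-t^2)y'=a+by+\sqrt n\,y^2$ with $y(0)=0$. If its discriminant is negative, the solution blows up in finite time, so instead I would compare with the explicit barrier $y(t)=\tfrac{c\,t}{1-t^2}$, and check that it is a supersolution for $t<1$ provided $a,b$ are small. This is where a numerical condition such as $n\sqrt n\,\alpha\le1/6.1$ should come out. A standard continuity argument then gives $N\le y$ on every interval where $u\ne0$. Since $|q|\le N/(1-t^2)$ up to constants and $u(0)=1$, the bound cannot degenerate before $t=1$, so $u$ has no zero on the ray. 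Integrating $\RE q$ then yields the two-sided estimate.

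I expect the main obstacle to be Step 3, that is, choosing a barrier compatible with the anisotropic weights $(1-|w_i|^2)$, which vary along the ray, while keeping the exponent exactly $\sqrt n/2$.
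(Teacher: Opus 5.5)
\textbf{There is a genuine gap between Steps~2 and~3.} Your strategy matches the one the paper alludes to for \cite{CH23}: restrict to rays, derive a Riccati-type equation for $\nabla\log u$, compare with a barrier, and integrate $\RE q$. Step~1 is sound, and so are the coefficient estimates of Step~2. In fact, by Lemma~\ref{lem:poli} most terms of \eqref{eq:S0ii}--\eqref{eq:S0ij} vanish, so $S^0_{ij}$ is even $O(\alpha)$ with the weight $((1-|w_i|^2)(1-|w_j|^2))^{-1}$.

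The problem is not a matter of choosing the barrier well: the inequality you reach admits no barrier on $[0,1)$. In hyperbolic time $s=\operatorname{arctanh}t$ it reads $dN/ds\le a+bN+\sqrt n\,N^2$ with $a>0$. Any $y\ge0$ with $dy/ds\ge a+\sqrt n\,y^2$ blows up before $s=\pi/(2\sqrt{a\sqrt n})$, that is, before some $t<1$, however small $a$ and $b$ are. Your candidate shows this directly. For $y=ct/(1-t^2)$ one has $(1-t^2)y'=c(1+t^2)/(1-t^2)$, whereas $\sqrt n\,y^2=\sqrt n\,c^2t^2/(1-t^2)^2$ dominates as $t\to1$.

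The bookkeeping is also off by a factor $(1-t^2)^{-1}$. Since $N$ is already weighted, $N\le ct/(1-t^2)$ would only give $|q|\le\sum_j|v_j|N/(1-t^2|v_j|^2)\le nct/(1-t^2)^2$. Its integral is $nct^2/(2(1-t^2))$, not $\frac{\sqrt n}{2}\log\frac{1}{1-t^2}$. What you need is a \emph{bounded} barrier for a weighted quantity, and that requires a damping term.

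\textbf{The damping is exactly what you discarded.} In one variable, $Q=(1-t^2)q$ satisfies $dQ/ds=-2tQ+(1-t^2)^2S^0-Q^2$. The term $-2tQ$, which comes from differentiating the weight, is what keeps $|Q|$ bounded. In your scheme it survives only as $-2t|v_i|^2|\Theta_i|$, which you dropped. It also degenerates for indices with $|v_i|<1$, while your bound for the forcing $\sum_jv_jS^0_{ij}$ on those components is still of size $\alpha/(1-t^2)$. For $v=e_1$, for instance, nothing in your estimates prevents $\Theta_2(t,0)$ from growing like $\alpha\log\frac{1}{1-t}$. So $\max_iN_i$ is the wrong quantity to control.

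\textbf{The missing idea is to use Lemma~\ref{lem:poli} once more.} Since $S^k_{ij}\equiv0$ for $k\notin\{i,j\}$, the functions $\Psi_k=v_k\,\partial_ku/u$, whose sum is $q$, satisfy a closed system along the ray:
\[
\Psi_k'=\Psi_k\sum_{j}v_jS^k_{jk}+v_k\sum_{j\ne k}S^j_{jk}\,\Psi_j+v_k\sum_jv_jS^0_{jk}-\Psi_k\,q .
\]
Because $|v_k|/(1-t^2|v_k|^2)\le1/(1-t^2)$, the \emph{uniform} weight suffices. For $P_k=(1-t^2)\Psi_k$ and $Q=\sum_kP_k$ one gets
\[
d|P_k|/ds\le-2t|P_k|+O(n\alpha)\bigl(1+\max_j|P_j|\bigr)+|P_k|\,|Q| .
\]
A barrier of the form $\kappa t$ then closes when $\alpha$ is small, giving $|q|\le c\,t/(1-t^2)$, and your final integration applies. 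The precise threshold, namely the constant $1/6.1$ and the exponent $\sqrt n/2$, depends on how carefully the sums over $j$ are estimated.
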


Compare with Theorem~\ref{thm:nocerosbola}. Just as in \eqref{eq:hiperplano}, the theorem says that if $F$ is normalized at the origin and $\|S_F\|\le\alpha$, with $\alpha$ in the indicated range, the image $F(\Pn)$ omits the hyperplane $1+\bil{a}{w}=0$ with $(n+1)a=\nabla J_F(0)$. It is the Ahlfors--Weill extension \emph{at one point}. What has not been achieved in the polydisk is an associated density that is at once invariant and convex with respect to the Bergman metric, and without it the argument of Section~\ref{sec:AW} cannot be repeated at all points simultaneously. The inequality \eqref{eq:psi} allows us, nevertheless, to transfer the information from the origin to an arbitrary point $z$, at the price of replacing $\alpha$ by $\alpha+2\sqrt2|z|_\infty$. In the same paper it is proved that the class of normalized mappings with $\|S_F\|\le\alpha$ is compact, that its order $\lambda_\alpha=\sup|\nabla J_F(0)|$ is finite and controls the radius of the largest Euclidean ball contained in all the images, and that $\lambda_\alpha\le2(n+1)+m(\varepsilon,n)\,\alpha^{1+\varepsilon}$ for each $\varepsilon>0$.

\section{Convex mappings}\label{sec:convexas}

In the disk, convex functions are the main example of the Nehari class. Chuaqui, Duren and Osgood \cite{CDO} proved that every convex function satisfies $\|Sf\|\le2$, with equality only for mappings onto a strip. It is natural to ask how large $\|S_F\|$ can be for a convex mapping of $\C^n$, that is, a biholomorphic mapping with convex image. The question is posed in \cite{H26}. In the polydisk the convex mappings are known explicitly, and the problem reduces to a finite-dimensional maximum which is computed when $n=2$ and estimated in general; in the ball only a family of examples is available, for which the norm is computed exactly. We follow the approach of \cite{H26}.

\medskip

\noindent\textbf{The polydisk.} By a theorem of Suffridge \cite{Suf70}, every convex biholomorphic mapping of $\Pn$ has the form
\begin{equation}\label{eq:suffridge}
F(z)=M\bigl(\varphi_1(z_1),\dots,\varphi_n(z_n)\bigr)+b,
\end{equation}
with $M$ an invertible matrix, $b\in\C^n$ and each $\varphi_k$ a convex function of the disk. The affine part does not alter the Schwarzian, so that it suffices to consider $F=(\varphi_1(z_1),\dots,\varphi_n(z_n))$. Then $J_F=\prod_k\varphi_k'(z_k)$ and, by \eqref{eq:SP},
\[
S_F(z)(v,v)=\bigl(\delta_1v_1,\dots,\delta_nv_n\bigr),\qquad\delta_k=\frac{\varphi_k''}{\varphi_k'}(z_k)\,v_k-\frac{2}{n+1}\sum_{j=1}^n\frac{\varphi_j''}{\varphi_j'}(z_j)\,v_j .
\]
To read off the Bergman norm it is convenient to normalize. Let
\[
x_k=\frac{\sqrt2\,v_k}{1-|z_k|^2},\qquad p_k=(1-|z_k|^2)\,\frac{\varphi_k''}{\varphi_k'}(z_k),
\]
so that $\|v\|_z=1$ is equivalent to $\sum|x_k|^2=1$. A direct calculation with \eqref{eq:bergmanpoli} gives
\begin{equation}\label{eq:normaconvexa}
\|S_F(z)\|^2=\frac12\,\max_{|x|=1}\ \sum_{k=1}^n|x_k|^2\,\Bigl|p_kx_k-\frac{2}{n+1}\sum_{j=1}^np_jx_j\Bigr|^2 .
\end{equation}
For a convex function we have $\bigl|(1-|z|^2)\varphi''/\varphi'-2\overline z\bigr|\le2$, hence $|p_k|<4$, and the value $4$ is attained in the limit for mappings onto a half-plane.

\begin{thm}\label{thm:convexaspoli}
Let $n\ge2$ and let $F:\Pn\to\C^n$ be a convex biholomorphic mapping.
\begin{enumerate}
\item[(a)] If $n>2$, then
\begin{equation}\label{eq:cotaconvexa}
\|S_F\|\le\frac{\sqrt{8(n+3)(n-1)}}{n+1}<2\sqrt2 .
\end{equation}
\item[(b)] If $n=2$, then $\|S_F\|\le2$, and the constant $2$ cannot be improved.
\end{enumerate}
\end{thm}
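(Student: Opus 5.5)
The plan is to reduce the statement, via Suffridge's theorem \eqref{eq:suffridge} and formula \eqref{eq:normaconvexa}, to a purely algebraic maximization problem. As explained before the statement, the affine part $M(\cdot)+b$ does not change $S_F$ (Corollary~\ref{cor:mobius}), so $F=(\varphi_1(z_1),\dots,\varphi_n(z_n))$ with each $\varphi_k$ convex. Then $\|S_F(z)\|^2=\tfrac12\max_{|x|=1}Q(p,x)$, where
\[
Q(p,x)=\sum_k|x_k|^2\,|p_kx_k-cs|^2,\qquad s=\sum_jp_jx_j,\qquad c=\frac{2}{n+1},
\]
and $|p_k|<4$. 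So it suffices to bound $\sup\{Q(p,x):\ |p_k|\le4,\ |x|=1\}$ by $16(n+3)(n-1)/(n+1)^2=16(1-c^2)$ when $n>2$, and by $8$ when $n=2$.

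\textbf{Reduction to the torus.} For fixed $x$, $Q$ is a sum of squared moduli of affine functions of $p\in\C^n$, hence convex (indeed plurisubharmonic) in each $p_k$ separately. By the maximum principle coordinate by coordinate, the supremum over the closed polydisk $|p_k|\le4$ is attained on the torus $|p_k|=4$. Writing $p_k=4\omega_k$ and $w_k=\omega_kx_k$, so that $|w_k|=|x_k|$ and $|w|=1$, we get
\[
Q=16\,G(w),\qquad G(w)=\sum_k|w_k|^2\,|w_k-cS|^2,\qquad S=\sum_kw_k .
\]
Here $w$ now ranges over the whole unit sphere. The statement becomes $G\le1-c^2$ for $n>2$ and $G\le\tfrac12$ for $n=2$.

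\textbf{Bounding $G$.} I would set $t_k=|w_k|^2$ (so $\sum t_k=1$) and expand:
\[
G=\sum t_k^2-2c\,\RE\Bigl(\overline S\sum t_kw_k\Bigr)+c^2|S|^2\sum t_k^2 .
\]
A first crude estimate, $G\le\max_kt_k\sum_k|w_k-cS|^2=\max_kt_k\bigl(1-c(2-nc)|S|^2\bigr)\le1$, shows the order of magnitude but loses the factor $1-c^2$. To recover it, I would first choose the phase of $S$ to maximize the cross term. Then I would study the critical points of $G$ on the sphere by Lagrange multipliers. By the symmetry of $G$ in the coordinates, the critical configurations should be those in which the $w_k$ take at most two distinct values: one large coordinate and $m$ equal small ones, possibly of opposite phase. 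This reduces $G$ to a function of two or three real parameters, which can be maximized by hand. For $n\ge3$ I expect the maximum to be $1-c^2$; for $n=2$ the same reduction should be carried out exactly in the two variables $(t_1,\theta)$, where $\theta$ is the relative phase of $w_1$ and $w_2$, and should yield $\tfrac12$.

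\textbf{Sharpness for $n=2$.} I would take $\varphi_1$ and $\varphi_2$ to be half-plane maps, for which $|p_k|\to4$ at suitable points approaching the boundary. Evaluating at the maximizing pair $(t_1,\theta)$ found above, and choosing the points $z_k$ so that the phases of $p_k$ match $\omega_k$, gives $\|S_F(z)\|\to2$.

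The main obstacle is the critical-point analysis of $G$ on the sphere, specifically justifying that the maximizers have at most two distinct coordinate values and controlling the cross term $\RE(\overline S\sum t_kw_k)$, whose sign can be negative. If the Lagrange analysis becomes unmanageable, I would instead fix $t$ and maximize over phases first. That maximum is attained when all the $w_k$ are real of two signs, which leaves a real polynomial problem on the simplex.
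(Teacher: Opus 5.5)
Your reduction to $G(w)=\sum_k|w_k|^2|w_k-cS|^2$ on the unit sphere is the same as the paper's. However, part (a) has a genuine gap: the bound $G\le1-c^2$ is never established.

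\begin{itemize}
\item The claim that critical configurations take at most two distinct coordinate values is asserted, not proved. Symmetry of $G$ alone does not imply it, and the paper only observes this structure numerically.
\item Your fallback claim is false: for fixed moduli, the maximum over phases need not be real. For $n=3$ ($c=\tfrac12$) and $t=(\tfrac13,\tfrac13,\tfrac13)$, the vector $w=(1,\omega,\omega^2)/\sqrt3$ with $\omega=e^{2\pi i/3}$ has $S=0$ and $G=\tfrac13$. Every real choice gives at most $\tfrac{11}{36}$, attained at $(1,1,-1)/\sqrt3$.
\item $1-c^2$ is not the maximum you would find. Since $\max G=\gamma_n^2/8$, the paper's numerics $\gamma_3\approx2.022$ give $\max G\approx0.511$ for $n=3$, against $1-c^2=\tfrac34$. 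The exact value of $\gamma_n$ for $n\ge3$ is listed in the paper as an open problem, so a critical-point classification aims at a harder target than the statement requires.
\item There is a slip in your expansion: the last term is $c^2|S|^2\sum_kt_k=c^2|S|^2$, not $c^2|S|^2\sum_kt_k^2$.
\end{itemize}

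The missing step is elementary: your crude estimate put the weights on the wrong side. Since $t_k\ge0$ and $\sum_kt_k=1$, $G$ is a convex combination of the numbers $|w_k-cS|^2$, so $G\le\max_k|w_k-cS|^2$. Writing $w_k-cS=(1-c)w_k-c\sum_{j\ne k}w_j$ and applying Cauchy--Schwarz gives
\[
|w_k-cS|^2\le\bigl((1-c)^2+(n-1)c^2\bigr)|w|^2=1-2c+nc^2=1-c^2,
\]
because $nc=2-c$ when $c=2/(n+1)$. That is the paper's entire proof of (a).

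For (b), your two-variable plan is workable. The paper sidesteps the phase optimization with the triangle inequality $|x_1-2x_2|\le a+2b$, where $a=|x_1|$ and $b=|x_2|$. This gives $G\le\tfrac19\bigl(1+4ab+6a^2b^2\bigr)\le\tfrac12$, since $ab\le\tfrac12$, with equality at $(1,-1)/\sqrt2$. Sharpness works as you suggest: take $\varphi_1=\varphi_2=\zeta/(1-\zeta)$ at $z=(r,r)$, where $p_1=p_2=2(1+r)\to4$.
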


\begin{proof}
For each fixed $x$, the right-hand side of \eqref{eq:normaconvexa} is a convex function of $p=(p_1,\dots,p_n)$, since it is a sum of squares of moduli of linear forms. Its maximum over the closed polydisk $\{|p_k|\le4\}$ is therefore attained when $|p_k|=4$ for all $k$. If $p_k=4e^{i\theta_k}$, the change $x_k\mapsto e^{i\theta_k}x_k$ preserves the unit sphere and removes the phases. Therefore $\|S_F\|\le\gamma_n$, where
\begin{equation}\label{eq:Lambda}
\gamma_n=2\sqrt2\ \max_{x\in\C^n,\ |x|=1}\Bigl(\sum_{k=1}^n|x_k|^2\,\Bigl|x_k-\frac{2}{n+1}\sum_{j=1}^nx_j\Bigr|^2\Bigr)^{1/2}.
\end{equation}

For (a) it suffices to bound $\gamma_n$, and the estimate, which holds for all $n\ge2$, is that of \cite{H26}. The sum in \eqref{eq:Lambda} does not exceed the largest of the factors $\bigl|x_k-\frac{2}{n+1}\sum_jx_j\bigr|^2$, because $\sum_k|x_k|^2=1$, and by the Cauchy--Schwarz inequality
\[
\Bigl|x_k-\frac{2}{n+1}\sum_jx_j\Bigr|^2\le\Bigl(\frac{n-1}{n+1}\Bigr)^2+(n-1)\Bigl(\frac{2}{n+1}\Bigr)^2=\frac{(n-1)(n+3)}{(n+1)^2}.
\]

For $n=2$ the maximum in \eqref{eq:Lambda} can be computed exactly, and it gives a smaller constant. Write $a=|x_1|$, $b=|x_2|$, $t=ab\in[0,\tfrac12]$. Since $x_1-\tfrac23(x_1+x_2)=\tfrac13(x_1-2x_2)$,
\[
\sum_k|x_k|^2\Bigl|x_k-\tfrac23(x_1+x_2)\Bigr|^2\le\frac19\bigl[a^2(a+2b)^2+b^2(b+2a)^2\bigr]=\frac19\bigl(1+4t+6t^2\bigr)\le\frac12,
\]
with equality for $x=(1,-1)/\sqrt2$. Hence $\gamma_2=2\sqrt2\cdot\tfrac{1}{\sqrt2}=2$ and $\|S_F\|\le2$. To see that the constant cannot be improved we take $\varphi_1(\zeta)=\varphi_2(\zeta)=\zeta/(1-\zeta)$ and $z=(r,r)$ with $0<r<1$. Then $p_1=p_2=2(1+r)\to4$ as $r\to1$, and by \eqref{eq:normaconvexa} $\|S_F(z)\|\to\gamma_2=2$.
\end{proof}

The proof gives something more precise than part (a). For every $n$ we have $\|S_F\|\le\gamma_n$, with $\gamma_n$ the number defined in \eqref{eq:Lambda}, and the example at the end, with $\varphi_k(\zeta)=\zeta/(1-\zeta)$ for all $k$ and $z=(r,\dots,r)$, shows that this bound is attained in the limit. Evaluating \eqref{eq:Lambda} at $x=(1,-1,0,\dots,0)/\sqrt2$ and at $x=e_1$ we obtain in addition
\begin{equation}\label{eq:cotasLambda}
\max\Bigl\{2,\ 2\sqrt2\,\frac{n-1}{n+1}\Bigr\}\le\gamma_n\le\frac{\sqrt{8(n+3)(n-1)}}{n+1},
\end{equation}
so that the bound \eqref{eq:cotaconvexa} has the correct size and $\gamma_n\to2\sqrt2$ as $n\to\infty$. But it is not sharp. For $n=2$ it would give $2.108$, against the exact value $2$ of part (b), and in general the two inequalities used to obtain it pull in opposite directions. The first asks that $x$ be concentrated in one coordinate, and the second that it be spread over all of them. Numerically, $\gamma_3\approx2.022$, $\gamma_4\approx2.177$ and $\gamma_5\approx2.292$, against the values $2.449$, $2.592$ and $2.667$ of the bound. The maximum in \eqref{eq:Lambda} is attained at vectors with one dominant coordinate and the remaining ones equal to each other, which suggests that $\gamma_n$ is a root of an explicit polynomial. For $n\ge3$ the value of $\gamma_n$ is not known, and we know only that it lies between the bounds of \eqref{eq:cotasLambda}.

The coincidence of $\gamma_2=2$ with the constant of Chuaqui, Duren and Osgood is striking, but one should not read too much into it. By Example~\ref{ex:producto}, for these mappings $S_F$ is built from the pre-Schwarzians $\varphi_k''/\varphi_k'$ and not from the Schwarzians of the $\varphi_k$. What the theorem does say is that the convex mappings of $\Pn$ have uniformly bounded Schwarzian, with a bound that does not grow with the dimension, and that the extremal mappings are, as in the disk, mappings with unbounded image. Comparing with Theorem~\ref{thm:poli} we see also that the class of convex mappings of the polydisk does not fit in the range in which the Schwarzian detects univalence, since their norm can approach $\gamma_n\ge2$, while the criterion requires $\|S_F\|\le1/(3\sqrt2)$. In the disk, by contrast, all convex functions belong to the Nehari class.

\medskip

\noindent\textbf{The Roper--Suffridge extension in the ball.} In the ball there is no description of the convex mappings comparable to \eqref{eq:suffridge}; we denote by $K(\B)$ the class of normalized convex mappings. There are examples, however. If $\varphi$ is convex in $\D$, its Roper--Suffridge extension $\Phi_\varphi$, defined in \eqref{eq:RS}, is convex in $\B$ \cite{RS}. By Example~\ref{ex:RS},
\begin{equation}\label{eq:SRS}
S_{\Phi_\varphi}(z)(v,v)=\tfrac12\,S\varphi(z_1)\,v_1^2\,\bigl(0,z_2,\dots,z_n\bigr).
\end{equation}

\begin{thm}\label{thm:RSbola}
Let $\varphi$ be locally univalent in $\D$ and $F=\Phi_\varphi$ its Roper--Suffridge extension to $\B$, $n\ge2$. Then, for $z=(z_1,z')\in\B$,
\begin{equation}\label{eq:normaRS}
\|S_F(z)\|=\frac{(1-|z_1|^2)^{3/2}\,|z'|\,|S\varphi(z_1)|}{2\sqrt{n+1}},
\end{equation}
and therefore
\[
\|S_F\|=\frac{\|S\varphi\|}{2\sqrt{n+1}},
\]
where $\|S\varphi\|$ is the classical norm \eqref{eq:normaclasica}. In particular, if $\varphi$ is convex then $\|S_F\|\le1/\sqrt{n+1}$, with equality when $\varphi(\D)$ is a strip, and if $\varphi$ is univalent then $\|S_F\|\le3/\sqrt{n+1}$.
\end{thm}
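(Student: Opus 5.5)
The plan is a direct computation from the formula \eqref{eq:SRS} for $S_F$ and the explicit Bergman metric \eqref{eq:bergmanbola}. Write $c=\tfrac12 S\varphi(z_1)$ and $w=(0,z')$, so that $S_F(z)(v,v)=c\,v_1^2\,w$. Since the vector $w$ does not depend on $v$, we have
\[
\|S_F(z)\|=|c|\,\|w\|_z\,\sup_{\|v\|_z=1}|v_1|^2 ,
\]
and the problem splits into two independent quantities: the Bergman length of the fixed vector $w$, and the largest first coordinate of a Bergman-unit vector at $z$.

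For the first, I would use the form $\|w\|_z^2=\frac{n+1}{(1-|z|^2)^2}\bigl[(1-|z|^2)|w|^2+|w\cdot z|^2\bigr]$. With $w=(0,z')$ we have $|w|^2=w\cdot z=|z'|^2$, so
\[
\|w\|_z^2=\frac{(n+1)|z'|^2(1-|z|^2+|z'|^2)}{(1-|z|^2)^2}=\frac{(n+1)|z'|^2(1-|z_1|^2)}{(1-|z|^2)^2}.
\]
For the second, write $\|v\|_z^2=\frac{n+1}{(1-|z|^2)^2}\,v^*Gv$ with the Hermitian matrix $G=(1-|z|^2)\Id+\overline z z^t$. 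By Cauchy--Schwarz for the inner product defined by $G$, the maximum of $|v_1|^2$ under $v^*Gv=\text{const}$ equals that constant times $(G^{-1})_{11}$. By the Sherman--Morrison formula, $G^{-1}=(1-|z|^2)^{-1}(\Id-\overline z z^t)$, so $(G^{-1})_{11}=(1-|z_1|^2)/(1-|z|^2)$. This gives
\[
\sup_{\|v\|_z=1}|v_1|^2=\frac{(1-|z|^2)(1-|z_1|^2)}{n+1}.
\]
Multiplying the three factors, the powers of $1-|z|^2$ cancel and we obtain exactly \eqref{eq:normaRS}.

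For the global norm, fix $z_1$ and take the supremum over $|z'|^2<1-|z_1|^2$. The factor $|z'|$ contributes $(1-|z_1|^2)^{1/2}$, so the supremum is $(1-|z_1|^2)^2|S\varphi(z_1)|/(2\sqrt{n+1})$; it is approached as $z$ tends to the boundary sphere while $z_1$ stays fixed. Taking then the supremum over $z_1\in\D$ gives $\|S\varphi\|/(2\sqrt{n+1})$. The particular cases follow from two classical facts. The Chuaqui--Duren--Osgood bound $\|S\varphi\|\le2$ for convex $\varphi$, with equality for strip mappings, gives $\|S_F\|\le1/\sqrt{n+1}$ with equality for strips. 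The Kraus--Nehari bound $\|S\varphi\|\le6$ for univalent $\varphi$ gives $\|S_F\|\le3/\sqrt{n+1}$.

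The only step needing care is the constrained maximization of $|v_1|^2$. The metric is not diagonal at a general point, and unitary rotations cannot be used freely because $S_F$ singles out the $z_1$-direction; only rotations in the $z'$ variables preserve its form. The inverse-matrix formula avoids any normalization of $z$. As a check, at $z'=0$ it gives $(1-|z_1|^2)^2/(n+1)$, which matches the radial weight of the metric.
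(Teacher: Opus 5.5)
Your proposal is correct and follows the paper's proof: you factor out $\tfrac12|S\varphi(z_1)|\,\|(0,z')\|_z$, compute that Bergman length, obtain $\sup_{\|v\|_z=1}|v_1|^2$ as $(G^{-1})_{11}$ with $G^{-1}=(1-|z|^2)^{-1}(\Id-\overline z z^t)$, take the suprema in $z'$ and then $z_1$, and apply the Chuaqui--Duren--Osgood and Kraus--Nehari bounds. One cosmetic slip: with your $G$, the Bergman form is $v^tG\overline v$ rather than $v^*Gv$, since the latter yields $|\bil{z}{v}|^2$ instead of $|v\cdot z|^2$; this is harmless because $G$ and $\overline G$ have inverses with the same real diagonal entries.
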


\begin{proof}
By \eqref{eq:SRS}, $\|S_F(z)(v,v)\|_z=\tfrac12|S\varphi(z_1)|\,|v_1|^2\,\|(0,z')\|_z$. From \eqref{eq:bergmanbola},
\[
\|(0,z')\|_z^2=\frac{n+1}{(1-|z|^2)^2}\bigl[(1-|z|^2)|z'|^2+|z'|^4\bigr]=\frac{(n+1)(1-|z_1|^2)\,|z'|^2}{(1-|z|^2)^2}.
\]
It remains to maximize $|v_1|^2$ over the vectors of Bergman norm one. The matrix of the metric is $\frac{n+1}{(1-|z|^2)^2}\bigl[(1-|z|^2)\Id+\overline z\,z^t\bigr]$, its inverse is $\frac{1-|z|^2}{n+1}\bigl[\Id-\overline z\,z^t\bigr]$, and the required maximum is the first diagonal entry of the inverse, that is,
\[
\max_{\|v\|_z=1}|v_1|^2=\frac{(1-|z|^2)(1-|z_1|^2)}{n+1}.
\]
Multiplying, we obtain \eqref{eq:normaRS}. Since $|z'|^2<1-|z_1|^2$, with supremum $1-|z_1|^2$, it follows that
\[
\sup_{z'}\|S_F(z_1,z')\|=\frac{(1-|z_1|^2)^2\,|S\varphi(z_1)|}{2\sqrt{n+1}}.
\]
The rest follows from $\|S\varphi\|\le2$ for convex functions \cite{CDO} and from $\|S\varphi\|\le6$ for univalent functions.
\end{proof}

The maximum of $|v_1|$ is not attained at a vector parallel to $e_1$, except when $z_1=0$ or $z'=0$. The optimal vector has components in the direction of $z'$. This is, once again, the anisotropy of the Bergman metric.

The theorem admits three readings. The first is that the Roper--Suffridge extension carries the classical Schwarzian to the ball exactly, with the norm multiplied by $1/(2\sqrt{n+1})$; in particular $\Phi_\varphi\in\Fa$ with $\alpha=\|S\varphi\|/(2\sqrt{n+1})$, which provides the family of Section~\ref{sec:familia} with an abundant source of examples. The second is the contrast between the two domains. The same mapping, measured with the metric of $\mathbb{D}^2$, has infinite norm. The third is a lower bound for the general problem,
\[
\sup_{F\in K(\B)}\|S_F\|\ \ge\ \frac{1}{\sqrt{n+1}} .
\]
That the supremum on the left is finite is known for $n=2$, by the consequence of Theorem~\ref{thm:ordennorma} mentioned in Section~\ref{sec:familia}. Its value is an open problem.

\section{Notes and open problems}\label{sec:notas}

\noindent\textbf{Section~\ref{sec:modelo}.} The Nehari criterion is from \cite{Nehari}; the proof we gave, by Sturm comparison along a diameter, is the usual one. For the history of the Schwarzian and the theorems of Kraus, Nehari and Becker, see the book of Duren \cite{Dur83}; the derivation of Schwarz by elimination of parameters is presented in \cite[Section~10.5]{Dur04}. The extension of Ahlfors and Weill is from \cite{AW}; its interpretation in terms of the hyperbolic metric of the image and of the best Möbius approximation is due to Chuaqui and Osgood \cite{CO94}, whose distortion theorems \cite{CO93} are also the model for several estimates of this chapter. Linearly invariant families were introduced by Pommerenke in \cite{Pom}.

\medskip

\noindent\textbf{Section~\ref{sec:schwarziana}.} The definition \eqref{eq:oda} and Theorem~\ref{thm:oda} are due to Oda \cite{Oda}; the canonical form of the systems is due to Yoshida \cite{Yos76}, who studied them in connection with uniformizing equations \cite{Yos84}. The proof we present, based on the constancy of the determinant $W$, makes explicit the role of the condition \eqref{eq:canonica}. There are other notions of Schwarzian derivative in several variables, due to Molzon and Mortensen \cite{MM94,MM96} and to Molzon and Tamanoi \cite{MT}, formulated in terms of projective connections; that of Oda has the advantage of its direct link with linear equations. The operator $S_F$ and its norm were introduced in \cite{H06}; the relation with Halley's method is from \cite{H22}, and the results on the pre-Schwarzian, from \cite{H11}. The relation between the Schwarzian and the extrinsic curvature of the preimages of hyperplanes arose in joint work of the author with Osvaldo Venegas, of Universidad Católica de Temuco, which treats the case $n=2$.

\begin{prob}
The bound \eqref{eq:curvaturabergman} gives, for the mappings of $\Fa$, a control of the Euclidean curvature of the preimages of hyperplanes. Can one obtain from it, by a geometric argument, a criterion for univalence, that is, that two of these preimages in general position meet in at most one point?
\end{prob}

\noindent\textbf{Section~\ref{sec:familia}.} Lemma~\ref{lem:cotas} and Theorems~\ref{thm:normal} and~\ref{thm:ordennorma} are from \cite{H06}; the characteristic equation, the bounds for the order and the example of Theorem~\ref{thm:ordeninf}, from \cite{CH13}. The variational method is that of Pommerenke \cite{Pom}. The concepts of order and norm order in $\C^n$ are due to Pfaltzgraff \cite{Pf97} and to Pfaltzgraff and Suffridge \cite{PS}.

\begin{prob}
Determine the order and the norm order of $\Fa$ in $\B$. Between Theorems~\ref{thm:ordensup} and~\ref{thm:ordeninf} there is a gap in the dependence on $n$, and the characteristic equation \eqref{eq:carac1}--\eqref{eq:carac2} contains more information than has been used.
\end{prob}

\noindent\textbf{Sections~\ref{sec:univalencia} and~\ref{sec:AW}.} The criteria by Sturm comparison are from \cite{H07}, and the criterion by Loewner chains, from \cite{H22}; this last paper is dedicated to the memory of Gabriela Kohr. The theory of Loewner chains in $\C^n$ and Theorem~\ref{thm:pfaltzgraff} are due to Pfaltzgraff \cite{Pf74}; a complete exposition can be found in \cite{GK}. The results of Section~\ref{sec:AW} are from \cite{CH15}. The proof of univalence is that of \cite{CH15}; the control of the curvature of $\Sigma$ used there is obtained here from Theorem~\ref{thm:curvatura}. The argument of Remark~\ref{obs:univdirecta} is a shorter variant. For linear convexity and its variants, see \cite{APS,Hor}.

\begin{prob}
Determine the largest $\alpha_0(n)$ for which $\|S_F\|\le\alpha_0$ implies univalence in $\B$. Theorem~\ref{thm:univCH15} gives a value of order $n^{-3/2}$, determined by the condition $\delta(n,\alpha_0)<1$ of \eqref{eq:delta}. By Theorem~\ref{thm:RSbola}, the Roper--Suffridge extensions of nonunivalent functions with $\|S\varphi\|$ slightly larger than $2$ show that $\alpha_0(n)\le1/\sqrt{n+1}$. The correct order of magnitude is not known.
\end{prob}

\noindent\textbf{Section~\ref{sec:polidisco}.} Theorem~\ref{thm:poli} is from \cite{CH09}, and Theorem~\ref{thm:noceros}, from \cite{CH23}.

\begin{prob}
Construct in the polydisk a density associated with the solutions of \eqref{eq:sistema} that is invariant and convex with respect to the Bergman metric, or show that none exists. A complete Ahlfors--Weill extension in $\Pn$ depends on this. In the same vein, what is the optimal constant in Theorem~\ref{thm:poli}?
\end{prob}

\noindent\textbf{Section~\ref{sec:convexas}.} The theorem of Chuaqui, Duren and Osgood is from \cite{CDO}; the structure of the convex mappings of the polydisk, from Suffridge \cite{Suf70}; the extension operator, from Roper and Suffridge \cite{RS}. The problem of bounding the Schwarzian of convex mappings in several variables is posed in \cite{H26}. The order of $K(\B)$ is not known for $n\ge2$. Barnard, FitzGerald and Gong \cite{BFG} proved $\ord K(\mathbb{B}^2)\le1.761$ and conjectured that $\ord K(\B)=(n+1)/2$, which Pfaltzgraff and Suffridge disproved in \cite{PS}.

\begin{prob}
Determine $\sup\{\|S_F\|:\ F\in K(\B)\}$. By Theorem~\ref{thm:RSbola} it is at least $1/\sqrt{n+1}$. Is the Roper--Suffridge extension of the strip extremal?
\end{prob}

\begin{prob}
Find a closed expression for the constant $\gamma_n$ of \eqref{eq:Lambda} when $n\ge3$. It is the best possible bound in Theorem~\ref{thm:convexaspoli}.
\end{prob}

\section{Exercises}\label{sec:ejercicios}

\begin{enumerate}
\item Verify that for $n=1$ the formula \eqref{eq:S0} gives $S^0_{11}f=-\tfrac12Sf$, and that $u=(f')^{-1/2}(1+a_2f)$ is the solution of $u''+\tfrac12(Sf)u=0$ with $u(0)=1$, $u'(0)=0$ when $f(z)=z+a_2z^2+\cdots$.

\item Let $f$ be as in the previous exercise, with $\|Sf\|\le2$, and $u$ the normalized solution. Adapting the proof of Theorem~\ref{thm:nehari}, prove that $|u(z)|\ge\sqrt{1-|z|^2}$ in $\D$. Deduce that $-1/a_2\notin f(\D)$.

\item Let $f(z)=z+\tfrac16f'''(0)z^3+\cdots$ be analytic near the origin. Verify \eqref{eq:osculante} in this case, and deduce the general case by composing with Möbius transformations.

\item Prove the formula \eqref{eq:jacmobius} for the Jacobian of a Möbius transformation, and deduce that $\nabla J_{T_a}(0)=-(n+1)a$.

\item Show that $T_a$ is holomorphic in $\B$ if and only if $|a|\le1$, and in $\Pn$ if and only if $\sum_i|a_i|\le1$. Conclude that the order of the family $\mathcal F_0$ is $(n+1)/2$ in the ball, and that $\lambda_0=n+1$ in the polydisk.

\item Check the count \eqref{eq:recuento}, that is, show that $\tfrac12n(n-1)(n+2)+n=\tfrac12n^2(n+1)$, and that the inequality on the right is strict exactly when $n\ge2$.

\item Let $F(z_1,z_2)=(z_1+\varphi(z_2),z_2)$ with $\varphi$ holomorphic in $\D$, $\varphi(0)=\varphi'(0)=0$. Show that $F$ is biholomorphic in $\mathbb{B}^2$ with constant Jacobian, that the only nonzero coefficient of its Schwarzian is $S^1_{22}F=\varphi''(z_2)$, and that $\|S_F(0)\|=|\varphi''(0)|/\sqrt3$. Conclude that there is no bound for $\|S_F\|$ valid for all normalized biholomorphic mappings of the ball.

\item Complete the calculations of Examples~\ref{ex:producto} and~\ref{ex:RS}.

\item\label{ej:gauss} Let $\phi$ be holomorphic in a plane domain and $\Sigma=\{(\phi(w),w)\}$ its graph in $\C^2$. Show that the induced metric on $\Sigma$ is $(1+|\phi'|^2)|dw|^2$, that its Gauss curvature is $K=-2|\phi''|^2(1+|\phi'|^2)^{-3}$, and that $\kappa_\Sigma=|\phi''|(1+|\phi'|^2)^{-3/2}$ in all directions. Compare with the curvature of a plane curve.

\item Deduce \eqref{eq:cubica} from \eqref{eq:curvatura}. For $F(z_1,z_2)=(f(z_1),z_2)$ compute the cubic form $c_z$ with Example~\ref{ex:producto}, and describe the preimages of the lines that have zero curvature at $z$.

\item Check that the functions $e^{\kappa z_1}$, $e^{-n\kappa z_1}$ and $e^{\kappa z_1}z_j$ solve the system that appears in the proof of Theorem~\ref{thm:ordeninf}, and compute the maximum of $\|S(0)(v,v)\|$.

\item Verify the identity $|J_\sigma(z)|=\bigl((1-|\sigma(z)|^2)/(1-|z|^2)\bigr)^{(n+1)/2}$ for the automorphism \eqref{eq:sigma}, and complete the proof of Lemma~\ref{lem:rhoinv}.

\item Let $\rho$ be a function of class $C^2$ in $(-1,1)$ with $\rho''\ge\beta^2\rho>0$, $\rho(0)=1$ and $\rho'(0)=0$. Prove that $\rho(t)\ge\cosh(\beta t)$. This is the final step of the proof of Theorem~\ref{thm:nocerosbola}.

\item Let $F(z,w)=(f(z),w)$ in $\mathbb{D}^2$. Using Example~\ref{ex:producto}, prove that
\[
\|S_F\|=\frac{\sqrt2}{3\sqrt3}\,\sup_{z\in\D}\,(1-|z|^2)\Bigl|\frac{f''}{f'}(z)\Bigr| .
\]

\item Show that the inverse of the matrix $(1-|z|^2)\Id+\overline z\,z^t$ is $(1-|z|^2)^{-1}\bigl[\Id-\overline z\,z^t\bigr]$, and that for a positive definite Hermitian form with matrix $G$ the maximum of $|v_1|^2$ over the vectors of norm one is $(G^{-1})_{11}$. With this, complete the proof of Theorem~\ref{thm:RSbola}.

\item Verify the lower bounds of \eqref{eq:cotasLambda}, and that for $n=2$ the maximum in \eqref{eq:Lambda} is attained at $x=(1,-1)/\sqrt2$.
\end{enumerate}


\end{document}